\documentclass[11pt, english]{article}
\usepackage[T1]{fontenc}
\usepackage[utf8]{inputenc}

\usepackage{graphicx}
\usepackage{textcomp}
\usepackage{babel}
\usepackage{tikz}
\usetikzlibrary{arrows.meta,calc,decorations.pathreplacing}
\usepackage{amsmath,amssymb,amsthm,mathrsfs}
\usepackage{xcolor}

\DeclareMathAlphabet{\mathpzc}{OT1}{pzc}{m}{it}
\usepackage{hyperref}
\usepackage[nameinlink]{cleveref}
\hypersetup{
	colorlinks=true,
	linkcolor=blue!60!black,
	citecolor=blue!60!black,
	urlcolor=blue!60!black
}

\usepackage{aliascnt}

\theoremstyle{plain}
\newtheorem{theorem}{Theorem}[section]

\newaliascnt{proposition}{theorem}
\newtheorem{proposition}[proposition]{Proposition}
\aliascntresetthe{proposition}

\newaliascnt{lemma}{theorem}
\newtheorem{lemma}[lemma]{Lemma}
\aliascntresetthe{lemma}

\newaliascnt{corollary}{theorem}
\newtheorem{corollary}[corollary]{Corollary}
\aliascntresetthe{corollary}

\newaliascnt{assumption}{theorem}

\aliascntresetthe{assumption}

\theoremstyle{definition}
\newaliascnt{definition}{theorem}
\newtheorem{definition}[definition]{Definition}
\aliascntresetthe{definition}

\newaliascnt{remark}{theorem}
\newtheorem{remark}[remark]{Remark}
\aliascntresetthe{remark}

\newaliascnt{example}{theorem}

\aliascntresetthe{example}

\DeclareMathOperator{\supp}{spt}

\numberwithin{equation}{section}
\usepackage{authblk}

\title{\bf A Fully Discrete Variational Approximation of Mather Measures and Sets}

\author[1]{Fabio Camilli}
\author[2]{Cristian Mendico}
\affil[1]{Dip. di Ingegneria e Geologia, Univ. "G. D'Annunzio" Chieti-Pescara, viale Pindaro 42, 65127 Pescara (Italy) \\ \href{mailto:fabio.camilli@unich.it}{fabio.camilli@unich.it}}
\affil[2]{Institut de Math\'ematique de Bourgogne - UMR 5584 CNRS, Universit\'e Bourgogne Europe\\ \href{mailto:cristian.mendico@u-bourgogne.fr}{cristian.mendico@u-bourgogne.fr}}

\begin{document}
	\maketitle

	\begin{abstract}
		We introduce a fully--discrete variational approximation of Mather measures and sets for Tonelli Lagrangians on the flat torus, together with a numerical procedure for approximating the entire Mather set. The scheme is based on a fully--discrete Lax--Oleinik operator with integer winding labels. We prove an $O(\tau+h/\tau)$ error estimate for the critical value, convergence of critical solutions, and a finite-dimensional characterization of fully--discrete Mather measures. Accumulation points of the reconstructed minimizing measures are continuous Mather measures, while the supports satisfy complementary upper and lower convergence results involving the Ma\~n\'e and Mather sets. To avoid the selection of only some minimizing components by exact discrete minimizers, we introduce a mass-threshold approximation based on almost-minimizing holonomic measures. This yields a finite-dimensional constrained optimization procedure designed to recover the whole Mather set.

		\vspace{0.25cm}
		\noindent\textbf{Key words:} Aubry--Mather and weak KAM theory; Ma\~n\'e set; fully--discrete Lax--Oleinik schemes; Mather measures; holonomic measures; Hamilton--Jacobi equations; Kuratowski convergence.
		\\
		\noindent\textbf{2020 AMS:} 37J50; 49L25; 35F21; 65M12.
	\end{abstract}

\section{Introduction}

Aubry--Mather theory describes the long-time behavior of Tonelli
Lagrangian systems through action-minimizing measures and their
supports. These objects are closely related to critical solutions of
the stationary Hamilton--Jacobi equation
\[
H(x,Du)=\alpha(H)
\qquad\text{in }\mathbb T^d,
\]
and encode the minimizing dynamics of the associated Euler--Lagrange
flow. We refer to Mather's foundational work \cite{Mather1991} and to
\cite{Fathi2008,Sorrentino2015} for comprehensive accounts of
Aubry--Mather and weak KAM theory.

Discrete variational models and their convergence to the continuous
theory have been studied in
\cite{Garibaldi,Gomes,Su}. Related variational, numerical, and discrete
weak KAM approaches can be found in
\cite{Camilli,Gomes_Ob,Hadikhanloo,Iturriaga-Mendico,
	Iturriaga-Wang,Rorro,Soga,tran,Zavidovique}. Numerical methods for invariant
structures are also well developed in the classical KAM setting,
including parameterization methods for invariant Lagrangian tori
\cite{HaroDeLaLlave,HuguetDeLaLlaveSire,HaroEtAl,
	FiguerasHaroLuque} and Greene-type or converse KAM methods for their
breakdown and nonexistence
\cite{MacKayPercival,MacKayMeissStark,MacKayGreene}.

In our previous work \cite{CamilliMendico}, we considered a
semi--discrete approximation in which time is discretized while the
state variable remains continuous. Here we develop a fully--discrete
counterpart and, more importantly, a finite-dimensional procedure for
approximating the whole Mather set. To the best of our knowledge, such
a computational procedure for the entire Mather set has not previously
been established in this generality.

Let $\mathcal G_h$ be a uniform grid of the flat torus and let
$\tau>0$ be the time step. Each transition between grid points
$x_i,x_j\in\mathcal G_h$ is equipped with a winding label
$\ell\in\mathbb Z^d$ and velocity
\[
v_{ij}^{\ell}:=\frac{x_j+\ell-x_i}{\tau}.
\]
The fully--discrete Lax--Oleinik operator is
\[
(T_{h,\tau}u_h)(x_j)
:=
\min_{\substack{x_i\in\mathcal G_h\\ \ell\in\mathbb Z^d}}
\left\{
u_h(x_i)
+
\tau L\left(x_i,\frac{x_j+\ell-x_i}{\tau}\right)
\right\}.
\]
The winding label distinguishes transitions with the same endpoints
on the torus but different lifts to the universal covering space. This
information is essential for reconstructing measures and minimizing
sets in phase space.

A closely related fully--discrete Lax--Oleinik integrator was studied
by Bouillard, Faou, and Zavidovique
\cite{BouillardFaouZavidovique}. Their quotient cost minimizes over
the lifts of a torus transition, whereas our formulation retains every
lift as a separate winding-labeled transition. Although the two
representations are related at the operator level, the latter preserves
the phase-space information needed to define fully--discrete holonomic
measures and study the convergence of their supports.

We first analyze the critical equation
\[
T_{h,\tau}u_{h,\tau}
=
u_{h,\tau}+\tau\bar L(h,\tau).
\]
We prove existence and uniqueness of the critical value, uniform
estimates for critical solutions and minimizing velocities, and the
error bound
\[
\bigl|\bar L(h,\tau)+\alpha(H)\bigr|
\le
C\left(\tau+\frac h\tau\right).
\]
The proof adapts the long-time comparison argument of
\cite{BouillardFaouZavidovique} to the present left-endpoint cost and
winding representation. Suitable interpolations of normalized critical
solutions then converge, up to subsequences, to viscosity solutions of
the continuous critical Hamilton--Jacobi equation.

We next formulate the fully--discrete problem in terms of holonomic
measures. Let $\mathcal H_{h,\tau}$ be the set of probability
distributions on the labeled transitions $(x_i,x_j,\ell)$ satisfying
mass conservation at every grid point. We prove that
\[
\bar L(h,\tau)
=
\min_{\mu\in\mathcal H_{h,\tau}}
\sum_{i,j,\ell}
L\left(x_i,\frac{x_j+\ell-x_i}{\tau}\right)
\mu_{ij}^{\ell}.
\]
The minimizers are called fully--discrete Mather measures, and their
supports define the fully--discrete Mather set. In graph terms, grid
points are vertices, labeled transitions are weighted directed edges,
and holonomy is the mass-balance condition. The critical equation is
therefore a min--plus eigenvalue problem related to minimum-mean cycles
and normalized circulations \cite{karp,Zavidovique}. This
finite-dimensional graph structure underlies our numerical procedure.

After reconstructing the discrete measures on
$\mathbb T^d\times\mathbb R^d$, we prove that every narrow accumulation
point of fully--discrete Mather measures is a continuous Mather
measure. For their supports, we establish the upper Kuratowski
inclusion
\[
\limsup_{n\to\infty}
\widetilde{\mathcal M}_{h_n,\tau_n}
\subset
\widetilde{\mathcal N}_L,
\]
where $\widetilde{\mathcal N}_L$ is the continuous Mañé set. Under
uniqueness of the continuous Mather measure, we also obtain
\[
\widetilde{\mathcal M}_L
\subset
\liminf_{n\to\infty}
\widetilde{\mathcal M}_{h_n,\tau_n}.
\]
Consequently, if
$\widetilde{\mathcal N}_L=\widetilde{\mathcal M}_L$, then
$\widetilde{\mathcal M}_{h_n,\tau_n}$ converges in the Kuratowski sense
to $\widetilde{\mathcal M}_L$.

Exact discrete minimizers may nevertheless select only some continuous
minimizing components, since small grid-dependent errors can separate
components having the same continuous action. To overcome this
selection effect, we introduce a mass-threshold approximation based on
almost-minimizing holonomic measures. An action tolerance retains
components with nearly minimal discrete action, while a local mass
threshold removes regions carrying negligible mass.

Our main computational result is that this approximation detects
\emph{every} minimizing component of the continuous system, at any
prescribed resolution in phase space. To the best of our knowledge, this
is the first scheme that provably computes the whole Mather set: previous
discrete methods approximate the critical value, the critical solutions,
or a single minimizing object. The procedure is completely explicit. All
the parameters it needs are given in closed form: the action tolerance
comes from the critical-value estimate and is of order $O(\tau)$ in the
balanced regime $h\asymp\tau^{2}$. What one has to solve is a finite
family of constrained minimization problems on a finite graph, with linear
constraints. We illustrate the mechanism on an example with two minimizing
components, where exact discrete minimization finds only one of them,
while our procedure finds both. Being able to see the whole Mather set, and not just a component,
opens the way to the numerical study of its finer structure. We plan to
return to this in future work.

The paper is organized as follows. Section~2 recalls the continuous and
semi--discrete settings. Sections~3--4 introduce the fully--discrete
scheme and its variational formulation. Section~5 studies the
convergence of critical solutions, measures, and Mather sets.
Section~6 introduces the mass-threshold approximation, and Section~7
describes its numerical implementation and an illustrative example.

	\section{Preliminaries on the continuous  Mather set}
	Let $\mathbb{T}^{d}:=\mathbb{R}^{d}/\mathbb{Z}^{d}$ be the flat torus and let $H:\mathbb{T}^{d}\times\mathbb{R}^{d}\to\mathbb{R}$ be a Tonelli Hamiltonian, i.e. $H\in C^{2}(\mathbb{T}^{d}\times\mathbb{R}^{d})$,   strictly convex in the momentum variable, namely
	\[
	D^{2}_{pp}H(x,p)>0 \qquad \text{for every } (x,p)\in \mathbb{T}^{d}\times\mathbb{R}^{d},
	\]
	and   superlinear in $p$. We denote by
	$L(x,v):=\sup_{p\in\mathbb{R}^{d}}\{p\cdot v-H(x,p)\}$
	the associated Tonelli Lagrangian. Thus $L\in C^{2}(\mathbb{T}^{d}\times\mathbb{R}^{d})$, it is strictly convex in $v$ and superlinear, that is, for every $K>0$ there exists $C(K)\in\mathbb{R}$ such that
	$L(x,v)\geq K|v|-C(K)$ with $(x,v)\in\mathbb{T}^{d}\times\mathbb{R}^{d}.$

	To introduce the definition of Mather set, we recall first the definition of closed probability measures. A probability measure	$\mu\in \mathcal{P}(\mathbb{T}^{d}\times\mathbb{R}^{d})$
	is said to be closed if
	\[
	\int_{\mathbb{T}^{d}\times\mathbb{R}^{d}} v\cdot D\phi(x)\,d\mu(x,v)=0
	\qquad \forall \phi\in C^{1}(\mathbb{T}^{d}).
	\]
	A closed probability measure $\mu$ is called a Mather measure if it minimizes the average action
	$\int_{\mathbb{T}^{d}\times\mathbb{R}^{d}} L(x,v)\,d\mu(x,v)$
	among all closed probability measures. Equivalently, if $\alpha(H)$ denotes the critical value appearing in the stationary Hamilton--Jacobi equation
	\begin{equation}\label{HJ}
		H(x,Du(x))=\alpha(H),\qquad x\in\mathbb{T}^{d},
	\end{equation}
	then Mather measures are closed measures attaining the critical action level
	\[
	\min_{\mu\ \mathrm{closed}}
	\int_{\mathbb{T}^{d}\times\mathbb{R}^{d}} L(x,v)\,d\mu(x,v)
	=
	-\alpha(H).
	\]

	The continuous Mather set is defined as the closure of the union of the supports of all Mather measures:
	\[
	\widetilde{\mathcal{M}}_{L}
	:=
	\overline{
		\bigcup
		\left\{
		\supp(\mu):
		\mu \ \text{is a Mather measure}
		\right\}
	}
	\subset \mathbb{T}^{d}\times\mathbb{R}^{d}.
	\]
	It is a nonempty compact set, invariant under the Euler--Lagrange flow
	$\Phi^{t}_{L}$; recall that every minimizing closed measure is in fact
	invariant under $\Phi^{t}_{L}$, see \cite{Fathi2008,Sorrentino2015}.

	We shall also use the continuous  Mañé  and Aubry set. Let $\mathcal S^{-}$ denote the
	set of viscosity solutions of \eqref{HJ}. A curve
	$\gamma:\mathbb R\to\mathbb T^{d}$ is said to be
	$(u,L,\alpha(H))$--calibrated if
	\begin{equation}\label{eq:calibrated-curve}
		u(\gamma(b))-u(\gamma(a))
		=
		\int_{a}^{b}L(\gamma(s),\dot\gamma(s))\,ds+\alpha(H)(b-a)
		\qquad\text{for all }a\le b,
	\end{equation}
	and we set
	\[
	\widetilde{\mathcal I}(u)
	:=
	\bigl\{(\gamma(t),\dot\gamma(t)):
	\gamma\ (u,L,\alpha(H))\text{--calibrated},\ t\in\mathbb R\bigr\}.
	\]
	The Aubry and the Ma\~n\'e sets are then defined by
	\begin{equation}\label{eq:aubry-mane-sets}
		\widetilde{\mathcal A}_{L}
		:=
		\bigcap_{u\in\mathcal S^{-}}\widetilde{\mathcal I}(u),
		\qquad
		\widetilde{\mathcal N}_{L}
		:=
		\bigcup_{u\in\mathcal S^{-}}\widetilde{\mathcal I}(u),
	\end{equation}
	and they satisfy
	$\widetilde{\mathcal M}_{L}\subset\widetilde{\mathcal A}_{L}
	\subset\widetilde{\mathcal N}_{L}$; see \cite{Fathi2008,Sorrentino2015}.
	In particular the trajectories of $\widetilde{\mathcal M}_{L}$ are globally
	minimizing.

	\section{A Fully Discrete Lax--Oleinik Scheme}
	In this section, we introduce a fully--discrete Lax--Oleinik scheme, in the spirit of the space discretization briefly outlined in \cite[Section 6]{Iturriaga-Wang}. It can also be viewed as the restriction of the semi--discrete scheme studied in \cite{CamilliMendico} to the spatial grid $\mathcal G_h$ and to the corresponding discrete velocity set generated by the lifted grid displacements. We study the basic properties of the resulting operator and its consistency with the semi--discrete problem.

	Let $\tau>0$ be a time step and let $h>0$ be a space discretization parameter.
	We consider the uniform grid of the flat torus
	\[
	\mathcal G_h
	:=
	\left\{
	x_i=ih \pmod{\mathbb Z^d}
	:\;
	i=(i_1,\dots,i_d)\in\{0,\dots,N-1\}^d
	\right\},
	\qquad Nh=1.
	\]
	We identify each grid point with its representative in $[0,1)^d$. For $x_i,x_j\in\mathcal G_h$ and $\ell\in\mathbb Z^d$, we define the discrete
	velocity
	\[
	v_{ij}^{\ell} := \frac{x_j+\ell-x_i}{\tau}.
	\]
	The integer vector $\ell\in\mathbb Z^d$ selects the lifted displacement
	$x_j+\ell-x_i$ and hence distinguishes the periodic realizations of the same
	transition on the torus; see Figure~\ref{fig:integer-lift-transition}.
	\begin{figure}[ht]
		\centering
		\begin{tikzpicture}[scale=3]

			\draw[step=1,gray!40,thin] (-0.05,-0.05) grid (2.05,1.05);

			\draw[thick] (0,0) rectangle (1,1);
			\node at (0.5,-0.12) {$[0,1)^2$};

			\filldraw[blue] (0.25,0.35) circle (0.018) node[below left] {$x_i$};
			\filldraw[red] (0.75,0.65) circle (0.018) node[above right] {$x_j$};

			\filldraw[red] (1.75,0.65) circle (0.018) node[above right] {$x_j+\ell$};

			\draw[->,blue,thick] (0.25,0.35) -- (0.75,0.65)
			node[midway,above left] {$[x_j-x_i]_{\mathbb T^2}$};

			\draw[->,red,thick,dashed] (0.25,0.35) -- (1.75,0.65)
			node[midway,below right] {$x_j+\ell-x_i$};

			\node[red] at (1.45,0.82) {$\ell=(1,0)$};


		\end{tikzpicture}
		\caption{Different lifts of the same transition on the torus. The point
			$x_j+\ell$ is a copy of $x_j$ in the universal covering space, and the vector
			$\ell$ selects the corresponding periodic representative of the displacement.}
		\label{fig:integer-lift-transition}
	\end{figure}
	For $x_i,x_j\in\mathcal G_h$ and $\ell\in\mathbb Z^d$, we define the lifted
	fully--discrete one-step action by
	\begin{equation}\label{eq:fd-lifted-action-winding}
		\widehat A_{\tau}^h(x_i,x_j,\ell)
		:=
		\tau L\left(x_i,v_{ij}^{\ell}\right)
		=
		\tau L\left(x_i,\frac{x_j+\ell-x_i}{\tau}\right).
	\end{equation}
	The corresponding one-step action between grid points on the torus is
	\begin{equation}\label{eq:fd-action}
		A_{\tau}^h(x_i,x_j)
		:=
		\min_{\ell\in\mathbb Z^d}\widehat A_{\tau}^h(x_i,x_j,\ell).
	\end{equation}
	We define the fully--discrete Lax--Oleinik operator by the equivalent formulas
	\begin{equation}\label{eq:fully-discrete-operator}
		\begin{aligned}
			(T_{h,\tau}u_h)(x_j)
			:&=
			\min_{x_i\in\mathcal G_h,\ \ell\in\mathbb Z^d}
			\left\{u_h(x_i)+\widehat A_{\tau}^h(x_i,x_j,\ell)\right\}\\
			&=
			\min_{x_i\in\mathcal G_h}
			\left\{u_h(x_i)+A_{\tau}^h(x_i,x_j)\right\},
			\qquad x_j\in\mathcal G_h.
		\end{aligned}
	\end{equation}
	The minimum is attained, since $\mathcal G_h$ is finite and $L$ is superlinear
	in the velocity variable.

	We say that a pair
	$(u_{h,\tau},\bar L(h,\tau)) \in \mathbb R^{\mathcal G_h}\times\mathbb R$
	solves the fully--discrete ergodic Lax--Oleinik equation if
	\begin{equation}\label{eq:fully-discrete-ergodic-equation}
		u_{h,\tau}(x_j)+\tau\bar L(h,\tau)
		=
		(T_{h,\tau}u_{h,\tau})(x_j),
		\qquad x_j\in\mathcal G_h .
	\end{equation}

	\begin{proposition}
		\label{prop:existence-fully-discrete}
		For every $h>0$ and $\tau>0$, there exist
		$\bar L(h,\tau)\in\mathbb R$ and
		$u_{h,\tau}:\mathcal G_h\to\mathbb R$ solving
		\eqref{eq:fully-discrete-ergodic-equation}. Moreover, the constant
		$\bar L(h,\tau)$ is uniquely determined and satisfies
		\[
		\inf_{\mathbb T^d\times\mathbb R^d}L
		\le
		\bar L(h,\tau)
		\le
		\max_{x\in\mathbb T^d}L(x,0).
		\]
	\end{proposition}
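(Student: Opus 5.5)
The plan is to treat \eqref{eq:fully-discrete-ergodic-equation} as a min--plus eigenvalue problem on the finite set $\mathcal G_h$. The kernel is $c_{ij}:=A_\tau^h(x_i,x_j)$, which is finite for every pair $(i,j)$: the minimum over $\ell$ in \eqref{eq:fd-action} is attained because $L$ is superlinear and continuous, so $\ell\mapsto \widehat A^h_\tau(x_i,x_j,\ell)\to+\infty$ as $|\ell|\to\infty$. The operator $T:=T_{h,\tau}$ on $\mathbb R^{N^d}$ has three properties.
\begin{itemize}
\item It is monotone: $u\le w$ implies $Tu\le Tw$.
\item It commutes with constants: $T(u+c)=Tu+c$.
\item Consequently it is nonexpansive in the sup norm, $\|Tu-Tw\|_\infty\le\|u-w\|_\infty$.
\end{itemize}
Since every entry $c_{ij}$ is finite (the matrix is irreducible, indeed complete), the classical finite-dimensional min--plus spectral theorem applies. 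It yields an eigenvector $u$ with $Tu=u+\lambda$, and $\lambda$ equals the minimum cycle mean
\[
\lambda=\min_{\text{cycles }(i_0,\dots,i_k=i_0)}\frac1k\sum_{m=0}^{k-1}c_{i_mi_{m+1}}.
\]
To keep the argument self-contained I would instead give a direct fixed-point proof. On the quotient space $\mathbb R^{N^d}/\mathbb R\mathbf 1$, equipped with the Hilbert seminorm $\operatorname{osc}(u)=\max u-\min u$, the map $T$ is well defined and nonexpansive. A compactness argument (the Kohlberg / Schaefer type result for topical maps with bounded orbits) then produces a fixed class, i.e.\ $Tu=u+\lambda$. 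The key step is showing that orbits stay bounded in $\operatorname{osc}$. One step already gives this: $(Tu)(x_j)\in[\min u+\min_{i,j}c_{ij},\ \max u+\max_{i,j}c_{ij}]$, so $\operatorname{osc}(Tu)\le \max c-\min c$ for all $u$. Thus $T$ maps the quotient into a compact convex set. Brouwer's theorem applied to the normalized map $u\mapsto Tu-\min Tu$ on the compact convex set $\{u:\min u=0,\ \max u\le\max c-\min c\}$ gives a fixed point $u$, i.e.\ $Tu=u+\lambda$ with $\lambda=\min Tu$. I then set $\tau\bar L(h,\tau):=\lambda$. This Brouwer step is the one I expect to need the most care, chiefly in checking continuity and the self-map property; both are immediate from the bound above.

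For uniqueness, suppose $Tu=u+\tau\bar L$ and $Tw=w+\tau\bar L'$. Iterating $n$ times gives $T^nu=u+n\tau\bar L$ and $T^nw=w+n\tau\bar L'$. Nonexpansiveness of $T^n$ gives $n\tau|\bar L-\bar L'|\le 2\|u\|_\infty+2\|w\|_\infty+\|u-w\|_\infty$. Dividing by $n$ and letting $n\to\infty$ yields $\bar L=\bar L'$.

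For the bounds, write the iterate as
\[
(T^nu)(x_j)=\min\Bigl\{u(x_{i_0})+\sum_{m=0}^{n-1}\tau L\bigl(x_{i_m},v^{\ell_m}_{i_mi_{m+1}}\bigr)\Bigr\},
\]
where the minimum runs over paths ending at $i_n=j$ and over labels $\ell_m$. For the lower bound, each term satisfies $\tau L\ge\tau\inf L$, so $T^nu\ge \min u+n\tau\inf L$. Comparing with $T^nu=u+n\tau\bar L$, dividing by $n\tau$ and letting $n\to\infty$ gives $\bar L\ge\inf L$. Note that $\inf L$ is finite by superlinearity. For the upper bound, choose the constant path $i_m=j$ with $\ell_m=0$, whose velocity is $0$. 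This gives $T^nu(x_j)\le u(x_j)+n\tau L(x_j,0)\le u(x_j)+n\tau\max_x L(x,0)$, and hence $\bar L\le\max_x L(x,0)$.
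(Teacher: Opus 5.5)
Your argument is correct in substance and takes a different route from the paper. Two points in the Brouwer step need repair.

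\textbf{The oscillation bound.} The interval you give for $(Tu)(x_j)$ has the wrong upper endpoint. From $[\min u+\min c,\ \max u+\max c]$ you only get $\operatorname{osc}(Tu)\le\operatorname{osc}(u)+\max c-\min c$, which is not uniform in $u$. Use instead the competitor $x_{i^*}$ with $u(x_{i^*})=\min u$. It is admissible at every $x_j$ because every $c_{i^*j}$ is finite. This gives $(Tu)(x_j)\le\min u+\max c$, and hence $\operatorname{osc}(Tu)\le\max c-\min c=:M$. This is exactly where completeness of the transition graph enters.

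\textbf{The domain of Brouwer's theorem.} The set $K=\{u:\min u=0,\ \max u\le M\}$ is not convex: on two nodes, $(0,M)$ and $(M,0)$ lie in $K$, but their midpoint does not. So Brouwer's theorem does not apply to $K$ verbatim. There are two easy fixes.
\begin{itemize}
\item Work in the quotient $\mathbb R^{\mathcal G_h}/\mathbb R\mathbf 1$, where $\{\operatorname{osc}\le M\}$ is a ball for the norm induced by $\operatorname{osc}$.
\item Normalize by the mean: the map $u\mapsto Tu-N^{-d}\sum_j(Tu)(x_j)$ sends the compact convex set $\{u:\sum_j u(x_j)=0,\ \operatorname{osc}(u)\le M\}$ continuously into itself. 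Any fixed point satisfies $Tu=u+\lambda$ with $\lambda$ constant.
\end{itemize}

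\textbf{Comparison with the paper.} The paper proves existence by vanishing discount. The map $(1+\delta)^{-1}T_{h,\tau}$ is a contraction, and the discounted solutions have oscillation bounded uniformly in $\delta$. A subsequence $\delta_n\to0$ of the normalized solutions, together with $\delta_n u_h^{\delta_n}(x_0)$, then yields $(u_{h,\tau},\tau\bar L(h,\tau))$. Uniqueness comes from a one-step maximum principle at a maximum point of $u-w$. The bounds come from evaluating the ergodic equation once at a maximum and once at a minimum point of $u_{h,\tau}$.

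You instead get existence from a single fixed-point step, with no limiting procedure. You handle uniqueness and the bounds through $T^n$ and division by $n$. This is valid but unnecessary: $n=1$ at a minimum point of $u$ already gives $\bar L\ge\inf L$, and $n=1$ at any $x_j$ gives $\bar L\le L(x_j,0)$.

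The paper's route is constructive, since the discounted problem is solved by iterating a contraction, and it avoids topological fixed-point theorems. Yours is shorter, and your min-cycle-mean remark ties $\bar L(h,\tau)$ directly to Karp's formula, which is used in Section~\ref{sec:implementation}. Your corrected one-step bound also makes the paper's ``finite-cost path'' oscillation estimate explicit: the same argument gives $(1+\delta)\operatorname{osc}(u_h^\delta)\le M$.
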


	\begin{proof}
		The operator $T_{h,\tau}$ is monotone and additively homogeneous:
		\[
		u_h\le w_h
		\quad\Longrightarrow\quad
		T_{h,\tau}u_h\le T_{h,\tau}w_h,
		\quad \textrm{and} \quad
		T_{h,\tau}(u_h+c)=T_{h,\tau}u_h+c.
		\]
		Moreover, the minimum in \eqref{eq:fully-discrete-operator} is attained by the
		superlinearity of $L$.

		We prove existence by a discounted approximation. For $\delta>0$, consider
		\[
		\delta u_h^\delta(x_j)+u_h^\delta(x_j)
		=
		(T_{h,\tau}u_h^\delta)(x_j),
		\qquad x_j\in\mathcal G_h.
		\]
		The map
		\[
		u_h\mapsto \frac{1}{1+\delta}T_{h,\tau}u_h
		\]
		is a contraction on $\mathbb R^{\mathcal G_h}$, hence the discounted equation
		has a unique solution. Fix $x_0\in\mathcal G_h$ and set
		$v_h^\delta(x_i):=u_h^\delta(x_i)-u_h^\delta(x_0).$
		Given $h,\tau$, the normalized family $(v_h^\delta)_\delta$ has uniformly
		bounded oscillation. Since $\mathcal G_h$ is finite, any two grid points can be connected by a finite-cost path. Iterating the discounted dynamic programming inequality along this path gives
		$\operatorname{osc}_{\mathcal G_h} v_h^\delta \le C_{h,\tau},$ where $C_{h,\tau}$ is independent of $\delta$. Therefore, up to a sequence
		$\delta_n\to0$,
		$v_h^{\delta_n}\to u_{h,\tau}$ in  $\mathbb R^{\mathcal G_h}.$
		Moreover, evaluating the discounted equation at a maximum and at a
		minimum point of $u_h^\delta$ and arguing as in the last part of the
		proof gives
		\[
		\tau\inf_{\mathbb T^d\times\mathbb R^d}L
		\le
		\delta u_h^\delta(x_i)
		\le
		\tau\max_{x\in\mathbb T^d}L(x,0),
		\qquad x_i\in\mathcal G_h,
		\]
		so that, up to extracting a further subsequence,
		$\delta_n u_h^{\delta_n}(x_0)\to \lambda_{h,\tau}$. Substituting
		$u_h^\delta=v_h^\delta+u_h^\delta(x_0)$ into the discounted equation
		gives
		\[
		\delta v_h^{\delta}+v_h^{\delta}+\delta u_h^{\delta}(x_0)
		=
		T_{h,\tau}v_h^{\delta},
		\]
		and passing to the limit
		$T_{h,\tau}u_{h,\tau}=u_{h,\tau}+\lambda_{h,\tau}.$
		Setting $\lambda_{h,\tau}:=\tau\bar L(h,\tau)$ yields
		\eqref{eq:fully-discrete-ergodic-equation}.

		Uniqueness of the constant follows from the standard maximum principle. If
		$T_{h,\tau}u=u+\tau c$, $T_{h,\tau}w=w+\tau c',$
		and
		$M:=\displaystyle{\max_{\mathcal G_h}}(u-w)$,
		then $u\le w+M$. By monotonicity and additivity,
		$T_{h,\tau}u\le T_{h,\tau}w+M.$
		Evaluating at a point where $u-w$ attains its maximum gives $c\le c'$. Reversing
		the roles of $u$ and $w$ gives $c'=c$.

		It remains to produce a bound for $\bar L(h,\tau)$. Let $x_M$ and $x_m$ be maximum and
		minimum points of $u_{h,\tau}$. Since the transition $x_M\to x_M$ with
		$\ell=0$ is admissible,
		\[
		u_{h,\tau}(x_M)+\tau\bar L(h,\tau)
		=
		T_{h,\tau}u_{h,\tau}(x_M)
		\le
		u_{h,\tau}(x_M)+\tau L(x_M,0),
		\]
		so
		$\bar L(h,\tau)\le \max_{x\in\mathbb T^d}L(x,0)$.
		On the other hand, for every $x_i,x_j,\ell$,
		\[
		\widehat A_{\tau}^h(x_i,x_j,\ell)
		\ge
		\tau\inf_{\mathbb T^d\times\mathbb R^d}L.
		\]
		Using \eqref{eq:fully-discrete-ergodic-equation} at the minimum point $x_m$ gives
		\[
		u_{h,\tau}(x_m)+\tau\bar L(h,\tau)
		\ge
		u_{h,\tau}(x_m)+\tau\inf_{\mathbb T^d\times\mathbb R^d}L.
		\]
		Therefore
		$\inf_{\mathbb T^d\times\mathbb R^d}L
		\le
		\bar L(h,\tau)
		\le
		\max_{x\in\mathbb T^d}L(x,0)$.
	\end{proof}

	We next establish uniform bounds for critical solutions and minimizing velocities, following the strategy of \cite[Prop. 4]{Iturriaga-Wang}.

	\begin{proposition}
		\label{prop:fd-lip-bound}
		Let $\tau_0>0$ and assume that
		$0<\tau\le\tau_0$ and $h\le\eta_0\tau$,
		for some fixed positive constant $\eta_0$. Then there exist constants
		$C,D>0$, independent of $h$ and $\tau$, such that every solution
		$u_{h,\tau}$ of \eqref{eq:fully-discrete-ergodic-equation} satisfies
		\[
		|u_{h,\tau}(x_i)-u_{h,\tau}(x_j)|
		\le
		C\,d_{\mathbb T^d}(x_i,x_j),
		\qquad x_i,x_j\in\mathcal G_h .
		\]
		Moreover, if $(x_i,\ell)$ realizes the minimum in
		\[
		u_{h,\tau}(x_j)+\tau\bar L(h,\tau)
		=
		u_{h,\tau}(x_i)+\widehat A_{\tau}^h(x_i,x_j,\ell),
		\]
		then
		\begin{equation}\label{eq:bounded-jump}
			|v_{ij}^{\ell}|
			=
			\left|
			\frac{x_j+\ell-x_i}{\tau}
			\right|
			\le D.
		\end{equation}
	\end{proposition}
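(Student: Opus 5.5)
We will argue in two steps, first proving the velocity bound \eqref{eq:bounded-jump} and then deducing the Lipschitz estimate from it. The model is the argument of \cite[Prop. 4]{Iturriaga-Wang}: an optimal transition must be at least as cheap as a competitor transition that reaches $x_j$ through nearby grid points.

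\textbf{Step 1: a crude oscillation bound.} First we bound $\operatorname{osc}_{\mathcal G_h}u_{h,\tau}$ by a constant independent of $h,\tau$.

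Fix $x_a,x_b\in\mathcal G_h$. Iterate \eqref{eq:fully-discrete-ergodic-equation} $k$ times, with $k\approx \lceil 1/\tau\rceil$, along a chain of grid points $x_a=y_0,y_1,\dots,y_k=x_b$. Choose the chain so that each lifted step $y_{m+1}+\ell_m-y_m$ has length at most $\sqrt d\,(\tau+h)$. This is possible by rounding the points $x_a+\tfrac{m}{k}(x_b+\ell-x_a)$ to the grid, using $d_{\mathbb T^d}\le \sqrt d/2$.

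The velocities along the chain are then bounded by $\sqrt d(1+\eta_0)$. Writing $\bar L=\bar L(h,\tau)$ and $v_m$ for the velocity of the $m$-th step, we get
\[
u(x_b)\le u(x_a)+\sum_{m}\tau\bigl(L(y_m,v_m)-\bar L\bigr)\le u(x_a)+C_0 ,
\]
because $\bar L$ is bounded below by Proposition~\ref{prop:existence-fully-discrete} and $k\tau\le 1+\tau_0$.

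\textbf{Step 2: velocity bound.} Let $(x_i,\ell)$ realize the minimum at $x_j$ and set $w=x_j+\ell-x_i$.

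Build the competitor path as follows. Choose an integer $k\ge1$ with $k$ comparable to $|w|/\tau$ (in the worst case it is the staying-put path, but the comparison below is cleaner). Go from $x_i$ to $x_j$ through $k$ intermediate grid points $z_m$ obtained by rounding $x_i+\tfrac mk w$. Each step has displacement at most $|w|/k+\sqrt d h$, so its velocity is bounded by a constant $V$ depending only on $\eta_0$ and $d$.

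Iterating the subsolution inequality $u(x_{\rm next})+\tau\bar L\le u(x_{\rm prev})+\tau L$ along the path gives
\[
u(x_j)+k\tau\bar L\le u(x_i)+k\tau\max_{|v|\le V}L .
\]
Comparing with the one-step optimality, $u(x_j)+\tau\bar L= u(x_i)+\tau L(x_i,w/\tau)$, does not compare paths of equal length, which is the main difficulty.

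To get a clean comparison we will instead use the one-step inequality against the $k$-fold inequality together with Step 1. From the optimal step and superlinearity with constant $K$,
\[
u(x_j)-u(x_i)\ge \tau K|w|/\tau-\tau C(K)-\tau\bar L = K|w|-\tau(C(K)+\bar L).
\]
From the chain, $u(x_j)-u(x_i)\le k\tau(\max_{|v|\le V}L-\bar L)\le C_1(|w|+\tau)$, since $k\tau\lesssim|w|+\tau$.

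Taking $K=C_1+1$ yields $|w|\le C_2\tau$, that is, $|v_{ij}^\ell|\le D$. Making this path construction uniform, in particular using $h\le\eta_0\tau$ to absorb the rounding errors and fixing the choice of $k$, is the step I expect to be the main obstacle.

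\textbf{Step 3: Lipschitz bound.} Take $x_i,x_j$ and a minimal lift $w=x_j+\ell-x_i$, so that $|w|=d_{\mathbb T^d}(x_i,x_j)$. Apply the same chain estimate as in Step 2, with $k\approx\lceil |w|/\tau\rceil$ steps when $|w|\ge\tau$.

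When $|w|\ge\tau$, the chain gives
\[
u(x_j)-u(x_i)\le C_1(|w|+\tau)\le 2C_1|w| .
\]

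When $|w|<\tau$, take a single step with velocity $|w|/\tau\le1$. This gives
\[
u(x_j)-u(x_i)\le \tau\Bigl(\max_{|v|\le1}L-\bar L\Bigr).
\]
This bound is $O(\tau)$ rather than $O(|w|)$, so it has to be refined. We will use the bounded-velocity structure from Step 2: an optimal predecessor of $x_j$ is also an admissible predecessor of $x_i$, with velocity changed by $|w|/\tau$. By the Lipschitz continuity of $L$ on bounded velocity sets, this changes the cost by at most $\tau\cdot C|w|/\tau=C|w|$. Hence $u(x_i)-u(x_j)\le C|w|$, and by symmetry the same holds with $x_i,x_j$ exchanged, which proves the Lipschitz estimate.
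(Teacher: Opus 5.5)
Your proposal is correct and follows essentially the same route as the paper: a chain estimate for increments along steps of length about $\tau$ (valid because $h\le\eta_0\tau$), superlinearity of $L$ to bound optimal velocities, and then reuse of the optimal predecessor with a shifted winding label plus Lipschitz continuity of $L$ in $v$ to get the local Lipschitz bound when $d_{\mathbb T^d}(x_i,x_j)<\tau$. The differences are cosmetic: your Step 1 oscillation bound is never actually used and can be dropped, and running the chain along the lifted displacement $w$ (giving $C_1(|w|+\tau)$ in one stroke) instead of splitting into $d_{\mathbb T^d}\le\tau$ and $d_{\mathbb T^d}>\tau$ and then using $d_{\mathbb T^d}\le|w|$ is merely a repackaging of the paper's Steps 2--3.
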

	\begin{proof}
		Set $\overline A_{\tau}^h:=\tau\bar L(h,\tau).$ 	By Proposition \ref{prop:existence-fully-discrete}, there exists \(B>0\),
		independent of \(h\) and \(\tau\), such that
		$|\bar L(h,\tau)|\le B.$
		We divide the proof into four steps.

		\emph{Step 1: action bounds.} With $v_{ij}^\ell:=(x_j+\ell-x_i)/\tau$, for every $D_0>0$ there is $C(D_0)>0$, independent of $h,\tau$, such that
		\begin{equation}\label{eq:bounded-small-jumps}
			\widehat A_{\tau}^h(x_i,x_j,\ell)-\overline A_{\tau}^h \le C(D_0)\tau
		\end{equation}
		whenever $|v_{ij}^\ell|\le D_0$. Indeed,
		\[ \widehat A_{\tau}^h(x_i,x_j,\ell)-\overline A_{\tau}^h = \tau\bigl[L(x_i,v_{ij}^\ell)-\bar L(h,\tau)\bigr], \] and the claim follows from the boundedness of $\bar L(h,\tau)$ and the continuity of $L$ on the compact set $\mathbb T^d\times\overline B_{D_0}$. We next derive a coercive lower bound. By the superlinearity of $L$, for every $M>0$ there exists $C_M>0$ such that
		$L(x,v)\ge M|v|-C_M$ for any $(x,v)\in\mathbb T^d\times\mathbb R^d.$
		Since $|\bar L(h,\tau)|\le B$, it follows that
		\begin{equation}\label{eq:coercive-action} \widehat A_{\tau}^h(x_i,x_j,\ell)-\overline A_{\tau}^h \ge M|x_j+\ell-x_i|-\tau(C_M+B).
		\end{equation}
		Consequently, \[ \frac{ \widehat A_{\tau}^h(x_i,x_j,\ell)-\overline A_{\tau}^h }{ |x_j+\ell-x_i| } \longrightarrow+\infty \] uniformly as $|v_{ij}^\ell| = \frac{|x_j+\ell-x_i|}{\tau} \longrightarrow+\infty.$

		\emph{Step 2: coarse increment estimate.} Equation \eqref{eq:fully-discrete-ergodic-equation} implies the subsolution inequality
		\begin{equation}\label{eq:fd-subsolution} u_{h,\tau}(x_j)-u_{h,\tau}(x_i) \le \widehat A_{\tau}^h(x_i,x_j,\ell)-\overline A_{\tau}^h
		\end{equation} for every $x_i,x_j\in\mathcal G_h$ and $\ell\in\mathbb Z^d$.\par
		 Suppose first that
		$d_{\mathbb T^d}(x_i,x_j)\le\tau.$
		Choose $q\in\mathbb Z^d$ such that
		$|x_j+q-x_i|=d_{\mathbb T^d}(x_i,x_j).$
		The corresponding discrete velocity has norm at most one. Therefore, applying \eqref{eq:fd-subsolution} with $\ell=q$ and using \eqref{eq:bounded-small-jumps} with $D_0=1$, we obtain
		\begin{equation}\label{eq:short-distance-estimate}
			u_{h,\tau}(x_j)-u_{h,\tau}(x_i) \le C_0\tau.
		\end{equation}
		Suppose now that $d_{\mathbb T^d}(x_i,x_j)>\tau.$ Since $h\le\eta_0\tau$, there exists a grid chain $ x_i=z_0,z_1,\ldots,z_N=x_j$  such that
		$d_{\mathbb T^d}(z_m,z_{m+1})\le\tau$, with $m=0,\ldots,N-1,$
		and  $N\tau\le C_1d_{\mathbb T^d}(x_i,x_j)$,  where $C_1$ depends only on the dimension and on $\eta_0$. Applying \eqref{eq:short-distance-estimate} along the chain and summing, we find \[
		u_{h,\tau}(x_j)-u_{h,\tau}(x_i) = \sum_{m=0}^{N-1} \bigl(u_{h,\tau}(z_{m+1})-u_{h,\tau}(z_m)\bigr)\le C_0N\tau\le C_2d_{\mathbb T^d}(x_i,x_j).
		\]
		Hence, if   $d_{\mathbb T^d}(x_i,x_j)>\tau$, then
		\begin{equation}\label{eq:large-scale-lip}
			u_{h,\tau}(x_j)-u_{h,\tau}(x_i) \le C_2d_{\mathbb T^d}(x_i,x_j).
		\end{equation}

		\emph{Step 3: minimizing velocities.}
		Let \((x_i,\ell)\) be a minimizer in \eqref{eq:fully-discrete-ergodic-equation} at \(x_j\). Set
		$v:=v_{ij}^{\ell} = \frac{x_j+\ell-x_i}{\tau}.$
		The calibration identity reads
		\begin{equation}\label{eq:calibrated-edge}
			\widehat A_{\tau}^h(x_i,x_j,\ell)-\overline A_{\tau}^h
			=
			u_{h,\tau}(x_j)-u_{h,\tau}(x_i).
		\end{equation}
		We distinguish two cases.	If
		$d_{\mathbb T^d}(x_i,x_j)\le\tau$,
		then \eqref{eq:short-distance-estimate} and
		\eqref{eq:calibrated-edge} yield
		$\tau L(x_i,v)-\tau\bar L(h,\tau) \le C_0\tau.$
		Therefore
		\begin{equation}\label{eq:min-velocity-short}
			L(x_i,v)\le C_0+B.
		\end{equation}
		By the superlinearity of \(L\), this implies a uniform bound on \(|v|\).

		If instead
		$d_{\mathbb T^d}(x_i,x_j)>\tau$,
		then \eqref{eq:large-scale-lip} and \eqref{eq:calibrated-edge} imply
		$\tau L(x_i,v)-\tau\bar L(h,\tau) \le C_2d_{\mathbb T^d}(x_i,x_j).$
		Since 	$d_{\mathbb T^d}(x_i,x_j)	\le	|x_j+\ell-x_i|
		=\tau|v|$,
		we obtain
		\begin{equation}\label{eq:min-velocity-long}
			L(x_i,v)\le C_2|v|+B.
		\end{equation}
		Choose \(M>C_2\) in the superlinearity estimate
		$L(x,v)\ge M|v|-C_M.$
		Combining it with \eqref{eq:min-velocity-long} gives
		$(M-C_2)|v|\le B+C_M.$
		Hence \(|v|\) is uniformly bounded also in this case.
		It follows that there exists \(D>0\), independent of \(h\) and \(\tau\),
		such that \eqref{eq:bounded-jump} holds. 	\par
		\emph{Step 4: local Lipschitz estimate.}
		Let \(x_j,x_m\in\mathcal G_h\) satisfy $d_{\mathbb T^d}(x_j,x_m)\le\tau.$
		Choose a minimizing pair \((x_i,\ell)\) in \eqref{eq:fully-discrete-ergodic-equation} at \(x_j\).
		By \eqref{eq:bounded-jump},
		$|x_j+\ell-x_i|\le\tau D.$

		Choose \(q\in\mathbb Z^d\) such that $|x_m+q-x_j|=d_{\mathbb T^d}(x_m,x_j),$	and set
		$\ell':=\ell+q.$ 	Then
		$(x_m+\ell'-x_i)-(x_j+\ell-x_i)=x_m+q-x_j,$
		and therefore
		\begin{equation}\label{eq:lift-difference}
			\left|
			(x_m+\ell'-x_i)-(x_j+\ell-x_i)
			\right|
			=
			d_{\mathbb T^d}(x_m,x_j).
		\end{equation}
		Moreover,
		\[
		|x_m+\ell'-x_i|
		\le
		|x_j+\ell-x_i|
		+
		d_{\mathbb T^d}(x_m,x_j)
		\le
		\tau(D+1).
		\]

		Using \((x_i,\ell')\) as a competitor in \eqref{eq:fully-discrete-ergodic-equation} at \(x_m\),
		and subtracting the equality at \(x_j\), we obtain
		\[
		u_{h,\tau}(x_m)-u_{h,\tau}(x_j)
		\le
		\widehat A_{\tau}^h(x_i,x_m,\ell')
		-
		\widehat A_{\tau}^h(x_i,x_j,\ell).
		\]
		The corresponding velocities
		$v:= \frac{x_j+\ell-x_i}{\tau},$ and $ w:= \frac{x_m+\ell'-x_i}{\tau}$
		belong to \(B_{D+1}\). Hence, by the mean value theorem and the boundedness of
		\(D_vL\) on
		\(\mathbb T^d\times\overline B_{D+1}\),
		\[
		\begin{aligned}
			&
			\widehat A_{\tau}^h(x_i,x_m,\ell')
			-
			\widehat A_{\tau}^h(x_i,x_j,\ell)
			=
			\tau\bigl(L(x_i,w)-L(x_i,v)\bigr)
			\\
			&\qquad \le
			\tau
			\sup_{\mathbb T^d\times B_{D+1}}|D_vL|
			\,|w-v|
			\\
			&\qquad =
			\sup_{\mathbb T^d\times B_{D+1}}|D_vL|
			\left|
			(x_m+\ell'-x_i)-(x_j+\ell-x_i)
			\right|.
		\end{aligned}
		\]
		Using \eqref{eq:lift-difference}, we conclude that
		$u_{h,\tau}(x_m)-u_{h,\tau}(x_j) \le C_3d_{\mathbb T^d}(x_m,x_j).$
		Exchanging the roles of \(x_j\) and \(x_m\) gives the reverse inequality.
		Therefore
		\begin{equation}\label{eq:local-lip}
			|u_{h,\tau}(x_m)-u_{h,\tau}(x_j)|
			\le
			C_3d_{\mathbb T^d}(x_m,x_j)
		\end{equation}
		whenever $d_{\mathbb T^d}(x_m,x_j)\le\tau.$

		Finally, let \(y,z\in\mathcal G_h\) be arbitrary. If
		\(d_{\mathbb T^d}(y,z)\le\tau\), then \eqref{eq:local-lip} applies. If
		instead \(d_{\mathbb T^d}(y,z)>\tau\), applying
		\eqref{eq:large-scale-lip} to \((y,z)\) and then to \((z,y)\) gives
		$|u_{h,\tau}(y)-u_{h,\tau}(z)| \le C_2d_{\mathbb T^d}(y,z).$
		In both cases we obtain
		\[
		|u_{h,\tau}(y)-u_{h,\tau}(z)|
		\le
		C\,d_{\mathbb T^d}(y,z)
		\qquad
		\forall y,z\in\mathcal G_h,
		\]
		with \(C\) independent of \(h\) and \(\tau\). Together with
		\eqref{eq:bounded-jump}, this proves the proposition.
	\end{proof}

We now compare the fully--discrete critical value directly with the continuous
	one. The   estimate adapts the long-time comparison argument of
	\cite[Theorems 3.3 and 3.7]{BouillardFaouZavidovique} to the present
	left-endpoint cost and winding representation. Let \(\mathcal T^t\) denote the backward Lax--Oleinik semigroup,
	defined for \(u\in C(\mathbb T^d)\) by
	\[
	(\mathcal T^t u)(y)
	:=
	\inf_{\substack{\gamma\in AC([0,t];\mathbb T^d)\\
			\gamma(t)=y}}
	\left\{
	u(\gamma(0))
	+
	\int_0^t L(\gamma(s),\dot\gamma(s))\,ds
	\right\}.
	\]
		For every critical solution \(u\), one has
	\[
	\mathcal T^t u=u-t\alpha(H).
	\]

	\begin{theorem}
		\label{thm:continuous-fully-discrete-critical-error}
		There exist $\tau_0,\eta_0,C>0$ such that
		\[
		\bigl|\bar L(h,\tau)+\alpha(H)\bigr|
		\le C\left(\tau+\frac h\tau\right)
		\]
		whenever $0<\tau\le\tau_0$ and $h/\tau\le\eta_0$.
	\end{theorem}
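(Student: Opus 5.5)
We prove the two one-sided inequalities $-\alpha(H)\le \bar L(h,\tau)+C(\tau+h/\tau)$ and $\bar L(h,\tau)\le -\alpha(H)+C(\tau+h/\tau)$ separately. Each is obtained by comparing a discrete object with a continuous one over a long time horizon $n\tau$ and letting $n\to\infty$. Since both critical values are growth rates of iterated Lax--Oleinik operators, we never need to compare solutions directly, only action of trajectories.

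\emph{Upper bound $\bar L(h,\tau)\le-\alpha(H)+C(\tau+h/\tau)$.} Fix a continuous critical solution $u$, which is Lipschitz with constant $K$ and semiconcave. Let $\gamma:(-\infty,0]\to\mathbb T^d$ be a $(u,L,\alpha(H))$-calibrated curve; it is $C^2$ with uniformly bounded velocity and acceleration. We sample $\gamma$ at times $-k\tau$ and project each sample onto a nearest grid point $z_k\in\mathcal G_h$, so that $|z_k-\gamma(-k\tau)|\le\sqrt d\,h$. We choose winding labels $\ell_k$ so that the lifted displacement $z_{k-1}+\ell_k-z_k$ differs from the lifted increment $\gamma(-(k-1)\tau)-\gamma(-k\tau)$ by at most $2\sqrt d\,h$. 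The resulting discrete velocities then differ from $\frac1\tau\int\dot\gamma$ by $O(h/\tau)$, and they stay in a fixed ball because $h\le\eta_0\tau$.

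By Taylor expansion of $L$ in $x$ and $v$ on that ball, combined with Jensen's inequality in $v$ (convexity), each step satisfies
\[
\tau L(z_k,v_k)\le\int_{-k\tau}^{-(k-1)\tau}L(\gamma,\dot\gamma)\,ds+C\tau(\tau+h/\tau).
\]
Jensen is what removes the first-order term in $v$. The $x$-error $\tau\cdot O(\tau+h)$ comes from the deviation between $\gamma(s)$ and the left endpoint $z_k$.

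Next, we iterate the discrete subsolution inequality. Writing $w_h$ for a discrete critical solution, we have
\[
w_h(z_0)-w_h(z_n)\le\sum_k\widehat A^h_\tau-n\tau\bar L.
\]
Since $w_h$ is bounded in oscillation by Proposition~\ref{prop:fd-lip-bound}, dividing by $n\tau$ and using calibration, $\int L=u(\gamma(0))-u(\gamma(-n\tau))-n\tau\alpha(H)$, gives
\[
\bar L\le-\alpha(H)+C(\tau+h/\tau)+O(1/n).
\]
Letting $n\to\infty$ yields the bound.

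\emph{Lower bound.} Here we start instead from a discrete calibrated chain. Using the minimizing predecessors in \eqref{eq:fully-discrete-ergodic-equation}, we build a backward chain $x_{i_0},x_{i_1},\dots$ with labels $\ell_k$ such that
\[
w_h(x_{i_0})-w_h(x_{i_n})=\sum_k\tau L(x_{i_k},v_k)-n\tau\bar L,
\]
and Proposition~\ref{prop:fd-lip-bound} gives $|v_k|\le D$. We connect consecutive lifted points by straight segments with constant velocity $v_k$, which produces a piecewise linear curve $\eta$ on $[-n\tau,0]$. Along each segment the position stays within $D\tau$ of the left endpoint, so
\[
\int L(\eta,\dot\eta)\le\sum_k\tau L(x_{i_k},v_k)+Cn\tau^2.
\]
Since the semigroup satisfies $\mathcal T^{n\tau}u=u-n\tau\alpha(H)$, we get
\[
u(\eta(0))-u(\eta(-n\tau))\le\int L+n\tau\alpha(H).
\]
Combining these and dividing by $n\tau$, the bounded oscillation terms vanish as $n\to\infty$, giving $-\alpha(H)\le\bar L+C\tau$. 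This direction loses no $h$ term at all.

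\emph{Main obstacle.} The key step is the upper bound: the per-step error must be $O(\tau^2+h)$ and not $O(h)$ divided by something worse. One has to check that grid projection errors at both endpoints do not accumulate beyond $O(h)$ per step, which is guaranteed because each $z_k$ is chosen independently near $\gamma(-k\tau)$. One also has to check that the uniform velocity bounds hold for the projected velocities, which requires the constraint $h/\tau\le\eta_0$. Summed over $n$ steps, the total error is $n(\tau^2+h)$, and after dividing by $n\tau$ this is exactly $C(\tau+h/\tau)$.
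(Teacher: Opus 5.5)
Your proof is correct, but it follows a different route from the paper's. The paper argues at the operator level. With $\phi_h:=u|_{\mathcal G_h}$, it proves the two-sided one-step residual bound $\|T_{h,\tau}\phi_h-\phi_h+\tau\alpha(H)\|_\infty\le C(\tau^2+h)$ by passing through the semi-discrete operator $E_\tau u(y)=\inf_{x,\ell}\{u(x)+\tau L(x,(y+\ell-x)/\tau)\}$. This splits into a time error $\|E_\tau u-\mathcal T^\tau u\|_\infty\le C\tau^2$ and a space error $0\le T_{h,\tau}\phi_h-E_\tau u\le Ch$. The long-time step is then purely abstract: it uses only additivity and nonexpansiveness of $T_{h,\tau}$, compared against $T_{h,\tau}^Nu_{h,\tau}=u_{h,\tau}+N\tau\bar L(h,\tau)$.

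You instead prove the two one-sided inequalities along explicit trajectories. For the upper bound on $\bar L(h,\tau)$, you project a backward calibrated curve onto the grid and sum the discrete Bellman inequality along it. For the lower bound, you interpolate a discrete calibrated chain by affine segments and test it against the domination property of $u$.

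Your route has two advantages. First, Jensen's inequality in $v$ replaces the acceleration bound that the paper derives from the Euler--Lagrange equation, so you only need a velocity bound on calibrated curves and convexity. Second, it makes the asymmetry of the estimate explicit: $\bar L(h,\tau)\ge-\alpha(H)-C\tau$ carries no spatial error at all.

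The price is that your lower bound relies on the uniform velocity bound $D$ for discrete minimizers from Proposition~\ref{prop:fd-lip-bound}, so it inherits that proposition's hypotheses and constants. The paper instead obtains every velocity bound from $\operatorname{Lip}(u)$ and does not need that proposition. The paper's proof also yields a reusable sup-norm consistency estimate for one step of the scheme.

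Two minor points. Bounded oscillation of $w_h$ needs no appeal to Proposition~\ref{prop:fd-lip-bound}, since $h,\tau$ are fixed as $n\to\infty$ and $\mathcal G_h$ is finite. The semiconcavity of $u$ plays no role in your argument.
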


	\begin{proof}
		Let $u$ be a critical solution, set
		$\phi_h:=u|_{\mathcal G_h}$, and define
		\[
		(E_\tau u)(y)
		:=
		\inf_{\substack{x\in\mathbb T^d\\ \ell\in\mathbb Z^d}}
		\left\{
		u(x)+\tau L\left(x,\frac{y+\ell-x}{\tau}\right)
		\right\}.
		\]
		The infimum is attained, since $\mathbb T^d$ is compact and the
		uniform superlinearity of $L$ makes the cost coercive with respect
		to the winding label.

		We first establish uniform bounds for the relevant velocities. Let
		$K:=\operatorname{Lip}(u)$, let $(x,\ell)$ minimize $E_\tau u(y)$,
		and set $v:=(y+\ell-x)/\tau$. Comparison with the admissible pair
		$(y,0)$ gives
		\[
		u(x)+\tau L(x,v)
		\le
		u(y)+\tau L(y,0).
		\]
		Since $d_{\mathbb T^d}(x,y)\le\tau|v|$, it follows that
		\[
		L(x,v)-K|v|
		\le
		\max_{z\in\mathbb T^d}L(z,0).
		\]
		Uniform superlinearity therefore yields $|v|\le R$, with $R$
		independent of $y$ and $0<\tau\le\tau_0$.

		Moreover, since $\mathcal T^\tau u=u-\tau\alpha(H)$, every minimizing
		curve in $\mathcal T^\tau u(y)$ is $u$-calibrated. The standard a
		priori compactness estimate for calibrated Tonelli curves gives,
		after enlarging $R$, $|\dot\gamma(s)|\le R$ for $0\le s\le\tau$.
		The Euler--Lagrange equation reads
		\[
		D^2_{vv}L(\gamma,\dot\gamma)\ddot\gamma
		=
		D_xL(\gamma,\dot\gamma)
		-D^2_{xv}L(\gamma,\dot\gamma)\dot\gamma.
		\]
		Since $D^2_{vv}L$ is uniformly positive definite on
		$\mathbb T^d\times\overline B_R$, we also obtain
		$|\ddot\gamma(s)|\le C$.

		We now compare $E_\tau u$ with $\mathcal T^\tau u$. Let $(x,\ell)$
		minimize $E_\tau u(y)$ and consider the affine lifted curve joining a
		lift of $x$ to $y+\ell$. Its velocity is $v$, and hence
		\[
		\left|
		\int_0^\tau L(\gamma(s),v)\,ds-\tau L(x,v)
		\right|
		\le C\tau^2.
		\]
		Using this curve as a competitor gives
		$\mathcal T^\tau u(y)\le E_\tau u(y)+C\tau^2$.

		Conversely, let $\gamma$ minimize $\mathcal T^\tau u(y)$, choose a
		lift $\widetilde\gamma$, and set
		$ x:=\gamma(0)$ and
		$\bar v:=(\widetilde\gamma(\tau)-\widetilde\gamma(0))/\tau$.
		The acceleration bound gives
		\[
		|\dot{\widetilde\gamma}(s)-\bar v|
		\le
		\frac1\tau\int_0^\tau
		|\dot{\widetilde\gamma}(s)-\dot{\widetilde\gamma}(t)|\,dt
		\le C\tau,
		\]
		while $d_{\mathbb T^d}(\gamma(s),x)\le R\tau$. Therefore, by the
		$C^1$ regularity of $L$ on the relevant compact set,
		\[
		\left|
		\int_0^\tau L(\gamma(s),\dot\gamma(s))\,ds
		-\tau L(x,\bar v)
		\right|
		\le C\tau^2.
		\]
		Writing $\widetilde\gamma(\tau)=y+\ell$ for some
		$\ell\in\mathbb Z^d$, after choosing the initial lift consistently,
		we have $\bar v=(y+\ell-x)/\tau$. Thus $(x,\ell)$ is admissible in
		the definition of $E_\tau u(y)$, and
		$E_\tau u(y)\le\mathcal T^\tau u(y)+C\tau^2$. Consequently,
		\begin{equation}
		\label{eq:euler-exact-one-step}
		\|E_\tau u-\mathcal T^\tau u\|_\infty
		\le C\tau^2.
		\end{equation}

		We next estimate the spatial discretization error. For
		$y\in\mathcal G_h$, restriction of the initial point gives
		$T_{h,\tau}\phi_h(y)\ge E_\tau u(y)$. Let $(x,\ell)$ minimize
		$E_\tau u(y)$, and choose lifts $\widetilde x$ and
		$\widetilde y=y+\ell$. Select $x_i\in\mathcal G_h$ and a lift
		$\widetilde x_i$ such that
		$|\widetilde x_i-\widetilde x|\le C_dh$. Keeping the terminal lift
		$\widetilde y$, set
		\[
		v_i:=\frac{\widetilde y-\widetilde x_i}{\tau}.
		\]
		The corresponding displacement determines an admissible winding label,
		and $|v_i-v|\le C_dh/\tau$. Since $h/\tau\le\eta_0$, the two
		velocities belong to a fixed compact set. Hence
		$|u(x_i)-u(x)|\le Kh$ and, after choosing $\tau_0\le1$,
		\[
		\tau|L(x_i,v_i)-L(x,v)|
		\le
		C\tau\left(h+\frac h\tau\right)
		\le Ch.
		\]
		Therefore $T_{h,\tau}\phi_h(y)\le E_\tau u(y)+Ch$, and thus
		\begin{equation}
		\label{eq:space-one-step}
		0\le
		T_{h,\tau}\phi_h-(E_\tau u)|_{\mathcal G_h}
		\le Ch.
		\end{equation}

		Combining \eqref{eq:euler-exact-one-step} and
		\eqref{eq:space-one-step} with
		$\mathcal T^\tau u=u-\tau\alpha(H)$, we obtain
		\begin{equation}
		\label{eq:critical-one-step-residual}
		\|T_{h,\tau}\phi_h-\phi_h+\tau\alpha(H)\|_\infty
		\le C(\tau^2+h).
		\end{equation}

		The operator $T_{h,\tau}$ is additively homogeneous and nonexpansive
		in the uniform norm. Iterating
		\eqref{eq:critical-one-step-residual} therefore gives
		\[
		\|T_{h,\tau}^N\phi_h-\phi_h+N\tau\alpha(H)\|_\infty
		\le CN(\tau^2+h).
		\]
		Let $u_{h,\tau}$ be a fully--discrete critical solution. Then
		\[
		T_{h,\tau}^Nu_{h,\tau}
		=
		u_{h,\tau}+N\tau\bar L(h,\tau).
		\]
		By nonexpansiveness,
		\[
		\left\|
		T_{h,\tau}^N\phi_h-u_{h,\tau}
		-N\tau\bar L(h,\tau)
		\right\|_\infty
		\le
		\|\phi_h-u_{h,\tau}\|_\infty.
		\]
		Combining the last two estimates yields
		\[
		N\tau\bigl|\bar L(h,\tau)+\alpha(H)\bigr|
		\le
		CN(\tau^2+h)+2\|\phi_h-u_{h,\tau}\|_\infty.
		\]
		For fixed $h,\tau$, division by $N\tau$ and passage to the limit as
		$N\to\infty$ give
		\[
		\bigl|\bar L(h,\tau)+\alpha(H)\bigr|
		\le
		C\left(\tau+\frac h\tau\right),
		\]
		which proves the claim.
	\end{proof}

	In particular, under
	\begin{equation}\label{eq:conv_parameter}
	\tau_n\to0,\qquad h_n\to0,\qquad h_n/\tau_n\to0,
	\end{equation}
	one has $\bar L(h_n,\tau_n)\to-\alpha(H)$.

The uniform estimates give compactness of the normalized discrete
solutions. The consistency of the scheme then identifies every
accumulation point as a critical viscosity solution.
\begin{proposition}
	\label{prop:convergence-critical-solutions}
	Assume \eqref{eq:conv_parameter}. Let \(u_{h_n,\tau_n}\) be normalized solutions of   \eqref{eq:fully-discrete-ergodic-equation} and let
	\(\widetilde u_{h_n,\tau_n}\) denote their piecewise affine periodic
	interpolations on \(\mathbb T^d\). Then there exist a subsequence, not
	relabeled, and a Lipschitz continuous function
	\(u\colon\mathbb T^d\to\mathbb R\) such that
	\[
	\widetilde u_{h_n,\tau_n}\longrightarrow u
	\qquad\text{uniformly on }\mathbb T^d.
	\]
	Moreover, \(u\) is a viscosity solution of the critical Hamilton--Jacobi
	equation
	\[
	H(x,Du)=\alpha(H)
	\qquad\text{in }\mathbb T^d.
	\]
\end{proposition}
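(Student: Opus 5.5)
The proof has two parts: compactness from the uniform Lipschitz bound, then identification of the limit through the consistency estimates already established in the proof of Theorem~\ref{thm:continuous-fully-discrete-critical-error}.

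\emph{Compactness.} Since \eqref{eq:conv_parameter} holds, for $n$ large we have $\tau_n\le\tau_0$ and $h_n\le\eta_0\tau_n$. Proposition~\ref{prop:fd-lip-bound} then gives $|u_{h_n,\tau_n}(x_i)-u_{h_n,\tau_n}(x_j)|\le C\,d_{\mathbb T^d}(x_i,x_j)$ with $C$ independent of $n$. A piecewise affine interpolation on a simplicial subdivision of the grid cells has gradient controlled by the grid differences. Hence $\widetilde u_n:=\widetilde u_{h_n,\tau_n}$ is Lipschitz with constant $C'=C'(d)C$. The normalization, say $u_{h_n,\tau_n}(x_0)=0$ or $\min=0$, gives a uniform sup bound. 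Arzel\`a--Ascoli then yields a subsequence and a $C'$-Lipschitz $u$ with $\widetilde u_n\to u$ uniformly. In particular $\max_{\mathcal G_{h_n}}|u_{h_n,\tau_n}-u|\to0$.

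\emph{Identification of the limit.} I would prove the semigroup identity $\mathcal T^t u=u-t\alpha(H)$ for every $t>0$. It is standard that this characterizes viscosity (weak KAM) solutions of $H(x,Du)=\alpha(H)$.

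Fix $t>0$ and set $N_n:=\lfloor t/\tau_n\rfloor$. The critical equation gives $T_{h_n,\tau_n}^{N_n}u_{h_n,\tau_n}=u_{h_n,\tau_n}+N_n\tau_n\bar L(h_n,\tau_n)$, and the right-hand side converges uniformly on grid points to $u-t\alpha(H)$ by Theorem~\ref{thm:continuous-fully-discrete-critical-error}. It therefore suffices to show $\max_{\mathcal G_{h_n}}|T_{h_n,\tau_n}^{N_n}(w|_{\mathcal G_{h_n}})-(\mathcal T^{N_n\tau_n}w)|\to0$ for every Lipschitz $w$ (here $w=u$). Nonexpansiveness then lets us replace $u_{h_n,\tau_n}$ by $u|_{\mathcal G_{h_n}}$ at a cost $o(1)$.

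For this I would reuse the one-step estimates \eqref{eq:euler-exact-one-step} and \eqref{eq:space-one-step}. Those were proved for a critical solution $u$, but the argument only used Lipschitz bounds, velocity bounds and $C^2$ regularity of $L$. The one place calibration entered is the bound on minimizing curves, and for a general Lipschitz $w$ I would instead use the a priori velocity bounds for minimizers of $\mathcal T^\tau w$ with $\operatorname{Lip}(w)\le K$. The plan is to apply these one-step bounds to the iterates $w_k:=\mathcal T^{k\tau_n}w$. These remain uniformly Lipschitz for $k\tau_n\le t$, which is the standard semigroup regularization for Lipschitz data on bounded time intervals.

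The composition is then handled by a telescoping argument. Write
\[
T^{N}(w|_{\mathcal G})-(\mathcal T^{N\tau}w)|_{\mathcal G}=\sum_{k=0}^{N-1}\Bigl(T^{N-k}(w_k|_{\mathcal G})-T^{N-k-1}(w_{k+1}|_{\mathcal G})\Bigr).
\]
By nonexpansiveness, each term is bounded by $\|T(w_k|_{\mathcal G})-w_{k+1}|_{\mathcal G}\|_\infty\le C(\tau_n^2+h_n)$. Summing gives the bound $C N_n(\tau_n^2+h_n)\le Ct(\tau_n+h_n/\tau_n)\to0$. The remainder from $t-N_n\tau_n<\tau_n$ is $O(\tau_n)$ by Lipschitz continuity in time of the semigroup.

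Passing to the limit, $\mathcal T^{t}u=u-t\alpha(H)$ holds on $\bigcup_n\mathcal G_{h_n}$, which is dense, and hence everywhere by continuity. So $u$ is a critical viscosity solution.

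\emph{Main obstacle.} The delicate step is the uniformity of the one-step consistency bound along the whole iteration $w_k$. This requires uniform-in-$k$ Lipschitz and velocity bounds for minimizers of $\mathcal T^{\tau}w_k$, which is what makes the constant $C$ in $C(\tau^2+h)$ independent of $k$. If that proves awkward, I would fall back to a direct Barles--Souganidis style verification at the level of a single step. Take a smooth $\varphi$ touching $u$ from above or below at $x$, and transfer it to nearby grid maxima or minima of $u_{h_n,\tau_n}-\varphi$. From the one-step relation $u_n(x_j)+\tau_n\bar L=\min_{i,\ell}\{u_n(x_i)+\tau_nL(x_i,v_{ij}^\ell)\}$, with velocities bounded by \eqref{eq:bounded-jump}, divide by $\tau_n$ and Taylor expand $\varphi$. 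This yields the two inequalities $\min_v\{L(x,v)-D\varphi\cdot v\}\le(\ge)-\alpha(H)$, i.e.\ $H(x,D\varphi)\ge(\le)\alpha(H)$. The grid error is $O(h_n/\tau_n)\to0$.
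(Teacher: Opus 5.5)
Your proposal is correct. Your fallback is essentially the paper's proof, but your main argument takes a different route.

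The paper identifies the limit by a local test-function argument. At a strict local extremum of $u-\phi$ it picks discrete extremum points $x_n\to x$ of $u_{h_n,\tau_n}-\phi$, inserts them into the single-step critical equation, and passes to the limit using consistency and $\bar L(h_n,\tau_n)\to-\alpha(H)$.

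You instead prove the weak KAM fixed-point identity $\mathcal T^tu=u-t\alpha(H)$. You show, by telescoping and nonexpansiveness, that $\lfloor t/\tau_n\rfloor$ iterates of $T_{h_n,\tau_n}$ approximate $\mathcal T^t$ on Lipschitz data with error $O(t(\tau_n+h_n/\tau_n))$. This needs \eqref{eq:euler-exact-one-step}--\eqref{eq:space-one-step} for the iterates $w_k=\mathcal T^{k\tau_n}u$, and that extension is legitimate:
\begin{itemize}
\item for a $K$-Lipschitz $w$ one has $|\mathcal T^sw-w|\le C(K)s$, which with the semigroup property gives a Lipschitz bound on $\mathcal T^sw$ uniform in $s\ge0$;
\item minimizers with $K$-Lipschitz initial data have bounded speed, by the transversality condition $|D_vL(\gamma(0),\dot\gamma(0))|\le K$ and conservation of energy;
\item the acceleration bound then follows from the Euler--Lagrange equation, exactly as in the proof of Theorem~\ref{thm:continuous-fully-discrete-critical-error}.
\end{itemize}

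What your route buys is a by-product, convergence with a rate of the discrete Lax--Oleinik semigroup to the continuous one, and it avoids test functions. The price is more continuous weak KAM input: a priori bounds for minimizers with merely Lipschitz data, and the equivalence between fixed points of $\mathcal T^t+t\alpha(H)$ and viscosity solutions. The paper's argument is shorter and uses only single-step information.

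Two minor points.
\begin{itemize}
\item What you actually obtain is $\max_{\mathcal G_{h_n}}|\mathcal T^tu-u+t\alpha(H)|\to0$, not an exact identity on $\bigcup_n\mathcal G_{h_n}$, since the grids need not be nested. The conclusion still follows by continuity, because every point of $\mathbb T^d$ is a limit of points of $\mathcal G_{h_n}$.
\item In your fallback the pairing of the inequalities is reversed. Touching from above (a maximum of $u-\varphi$) yields $H(x,D\varphi(x))\le\alpha(H)$, using an arbitrary competitor transition. Touching from below uses the minimizing transition and \eqref{eq:bounded-jump} to give $H(x,D\varphi(x))\ge\alpha(H)$.
\end{itemize}
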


\begin{proof}
	The uniform Lipschitz estimate and the normalization imply that
	\((\widetilde u_{h_n,\tau_n})_n\) is precompact in
	\(C(\mathbb T^d)\). Hence, up to a subsequence,
	\(\widetilde u_{h_n,\tau_n}\to u\) uniformly for some Lipschitz
	function \(u\).

	It remains to identify the limit. Let \(\phi\in C^2(\mathbb T^d)\)
	and let \(x\) be a strict local maximum of \(u-\phi\). Choosing grid
	points \(x_n\to x\) where
	\(u_{h_n,\tau_n}-\phi\) attains a corresponding discrete maximum and
	using the fully--discrete critical equation, we obtain, by consistency,
	$H(x,D\phi(x))\le\alpha(H).$
	Here the consistency error tends to zero because
	\(\tau_n\to0\) and \(h_n/\tau_n\to0\), while
	\(\bar L(h_n,\tau_n)\to-\alpha(H)\). The analogous argument at a
	strict local minimum gives
	$H(x,D\phi(x))\ge\alpha(H).$
	Thus \(u\) is both a viscosity subsolution and supersolution of the
	critical Hamilton--Jacobi equation.
\end{proof}


	\section{The Fully--Discrete Mather Set}

	In this section, we define the fully--discrete Mather set associated with the
	fully--discrete Lax--Oleinik scheme. The construction is
	based on probability measures on discrete transitions
	$(x_i,x_j,\ell)\in\mathcal G_h\times\mathcal G_h\times\mathbb Z^d$,
	where the integer vector $\ell$ records the winding of the transition.

	\begin{definition}
		A family
		$\mu=\left(\mu_{ij}^{\ell}\right)_
		{x_i,x_j\in\mathcal G_h,\ \ell\in\mathbb Z^d}$
		is called a fully--discrete holonomic probability measure if
		$\mu_{ij}^{\ell}\ge0$ and $\sum_{i,j,\ell}\mu_{ij}^{\ell}=1$,
		and
		\begin{equation}\label{eq:fd-holonomy-winding}
			\sum_{j,\ell}\mu_{ij}^{\ell}
			=
			\sum_{j,\ell}\mu_{ji}^{\ell},
			\qquad
			\forall x_i\in\mathcal G_h.
		\end{equation}
		We denote by $\mathcal H_{h,\tau}$ the set of fully--discrete holonomic
		probability measures.
	\end{definition}

	Equivalently, \eqref{eq:fd-holonomy-winding} can be written in weak form as
	\begin{equation}\label{eq:fd-holonomy-weak-winding}
		\sum_{i,j,\ell}
		\bigl(\varphi(x_j)-\varphi(x_i)\bigr)\mu_{ij}^{\ell}=0,
		\qquad
		\forall \varphi:\mathcal G_h\to\mathbb R.
	\end{equation}
	\begin{remark}
The coefficient $\mu_{ij}^{\ell}$ is the mass assigned to the transition
$(x_i,x_j,\ell)$. Thus \eqref{eq:fd-holonomy-winding} equates incoming and
outgoing masses at every node. Since $x_j=x_i+\tau v_{ij}^{\ell}$ on
$\mathbb T^d$, its weak form \eqref{eq:fd-holonomy-weak-winding} is the
discrete counterpart of the closedness condition in phase space.
\end{remark}

	\begin{definition}
		A fully--discrete Mather measure is a minimizer of
		\begin{equation}\label{eq:fd-Mather-LP-winding}
			\min_{\mu\in\mathcal H_{h,\tau}}
			\sum_{i,j,\ell}
			L\left(x_i,\frac{x_j+\ell-x_i}{\tau}\right)\mu_{ij}^{\ell}.
		\end{equation}
		The fully--discrete Mather set is the subset $\widetilde{\mathcal M}_{h,\tau}\subset\mathbb T^d\times\mathbb R^d$ defined by
		\[ \widetilde{\mathcal M}_{h,\tau} := \overline{ \bigcup \left\{ (x_i,v_{ij}^{\ell}) : \mu_{ij}^{\ell}>0 \right\} }, \]
		where $v_{ij}^{\ell}:=(x_j+\ell-x_i)/\tau$,
		and the union is taken over all fully--discrete Mather measures. The
		minimum in \eqref{eq:fd-Mather-LP-winding} is attained, see Remark
		\ref{rem:fd-mather-existence} below.
	\end{definition}

	We reconstruct a fully--discrete holonomic measure as a probability measure on
	phase space by setting
	\begin{equation}\label{eq:fd-reconstruction-winding}
		\mathcal R_{h,\tau}\mu
		:=
		\sum_{i,j,\ell}
		\mu_{ij}^{\ell}\,
		\delta_{\left(x_i,\frac{x_j+\ell-x_i}{\tau}\right)},
	\end{equation}
	and we denote the fully--discrete action of $\mu\in\mathcal H_{h,\tau}$ by
	\begin{equation}\label{eq:fd-action-functional}
		\mathcal A_{h,\tau}(\mu)
		:=
		\sum_{i,j,\ell}L\bigl(x_i,v_{ij}^{\ell}\bigr)\mu_{ij}^{\ell}
		=
		\int_{\mathbb T^d\times\mathbb R^d}L\,d\bigl(\mathcal R_{h,\tau}\mu\bigr).
	\end{equation}

	\begin{proposition}
		\label{prop:fd-critical-duality}
		For every \(h,\tau>0\),
		\[
		\bar L(h,\tau)
		=
		\min_{\mu\in\mathcal H_{h,\tau}}
		\sum_{i,j,\ell}
		L\left(x_i,\frac{x_j+\ell-x_i}{\tau}\right)
		\mu_{ij}^{\ell}.
		\]
	\end{proposition}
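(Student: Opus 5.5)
We prove the two inequalities separately. The lower bound comes from the critical solution given by Proposition~\ref{prop:existence-fully-discrete}. The upper bound comes from a calibrated cycle built out of minimizing transitions.

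\emph{Lower bound.} Let \(u_{h,\tau}\) solve \eqref{eq:fully-discrete-ergodic-equation}. As in \eqref{eq:fd-subsolution}, for every labeled transition \((x_i,x_j,\ell)\) we have
\[
u_{h,\tau}(x_j)-u_{h,\tau}(x_i)\le \tau L\bigl(x_i,v_{ij}^{\ell}\bigr)-\tau\bar L(h,\tau).
\]
Take \(\mu\in\mathcal H_{h,\tau}\) with finite action; if the action is \(+\infty\) there is nothing to prove. Multiply the inequality by \(\mu_{ij}^{\ell}\) and sum over \(i,j,\ell\). By the weak holonomy condition \eqref{eq:fd-holonomy-weak-winding} with \(\varphi=u_{h,\tau}\), the left-hand side vanishes. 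Dividing by \(\tau\) gives \(\mathcal A_{h,\tau}(\mu)\ge\bar L(h,\tau)\).

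This uses \eqref{eq:fd-holonomy-weak-winding} for a possibly infinitely supported \(\mu\), since \(\ell\) ranges over \(\mathbb Z^d\). That is harmless: \(\varphi\) is bounded on the finite grid and \(\sum\mu_{ij}^\ell=1\), so all sums converge absolutely.

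\emph{Upper bound and attainment.} We construct a measure with action exactly \(\bar L(h,\tau)\).
\begin{enumerate}
\item For each \(x_j\in\mathcal G_h\), choose a minimizer \((x_{\sigma(j)},\ell_j)\) in \eqref{eq:fully-discrete-ergodic-equation}; it exists because the minimum is attained.
\item The map \(j\mapsto\sigma(j)\) sends the finite set \(\mathcal G_h\) to itself. Starting anywhere and iterating \(\sigma\), we eventually enter a cycle
\[
x_{k_0}\to x_{k_1}\to\cdots\to x_{k_{m-1}}\to x_{k_0},
\qquad \sigma(k_{r+1})=k_r \ \text{(indices mod } m).
\]
Each edge \(x_{k_r}\to x_{k_{r+1}}\) carries the label \(\ell_{k_{r+1}}\) and satisfies the calibration identity \eqref{eq:calibrated-edge}.
\item Define \(\mu\) by putting mass \(1/m\) on each labeled transition \((x_{k_r},x_{k_{r+1}},\ell_{k_{r+1}})\), \(r=0,\dots,m-1\). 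Every vertex of the cycle has exactly one incoming and one outgoing edge, so \eqref{eq:fd-holonomy-winding} holds and \(\mu\in\mathcal H_{h,\tau}\).
\item Sum the calibration identities around the cycle. The increments of \(u_{h,\tau}\) telescope to zero, leaving
\[
\sum_r \tau L\bigl(x_{k_r},v^{\ell_{k_{r+1}}}_{k_rk_{r+1}}\bigr)=m\tau\bar L(h,\tau),
\qquad\text{i.e.}\qquad \mathcal A_{h,\tau}(\mu)=\bar L(h,\tau).
\]
\end{enumerate}
Together with the lower bound, this shows the minimum exists and equals \(\bar L(h,\tau)\).

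The only delicate point is bookkeeping: in each calibrated step the cost \(L\) is evaluated at the left endpoint \(x_i=x_{\sigma(j)}\). The cycle edges must therefore be oriented from \(x_{\sigma(j)}\) to \(x_j\) with label \(\ell_j\), so that each edge's cost is exactly the term appearing in \eqref{eq:calibrated-edge}.

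A secondary point is that \(\mathcal H_{h,\tau}\) is infinite-dimensional because \(\ell\in\mathbb Z^d\), so the existence of a minimizer is not automatic from compactness. The explicit cyclic measure above settles existence directly.
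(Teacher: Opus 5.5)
Your proposal is correct and follows essentially the same route as the paper: you sum the Bellman (subsolution) inequality against an arbitrary holonomic measure to get the lower bound, and you take the empirical measure of a cycle of minimizing predecessors, telescoping the calibration identities, to get attainment. Your remarks on absolute convergence over $\ell\in\mathbb Z^d$ and on orienting each cycle edge from $x_{\sigma(j)}$ to $x_j$ are careful points that the paper leaves implicit.
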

	\begin{proof}
		Let \(u_{h,\tau}\) be a solution of \eqref{eq:fully-discrete-ergodic-equation}.
		Then, for every
		\(x_i,x_j\in\mathcal G_h\) and \(\ell\in\mathbb Z^d\),
		\[
		u_{h,\tau}(x_j)+\tau\bar L(h,\tau)
		\leq
		u_{h,\tau}(x_i)
		+
		\widehat A_{\tau}^h(x_i,x_j,\ell).
		\]
		Let \(\mu\in\mathcal H_{h,\tau}\). Multiplying the previous inequality by
		\(\mu_{ij}^{\ell}\) and summing over \(i,j,\ell\), we obtain
		\[
		\tau\bar L(h,\tau)
		\leq
		\tau
		\sum_{i,j,\ell}
		L\left(
		x_i,\frac{x_j+\ell-x_i}{\tau}
		\right)\mu_{ij}^{\ell}
		+
		\sum_{i,j,\ell}
		\bigl(
		u_{h,\tau}(x_i)-u_{h,\tau}(x_j)
		\bigr)\mu_{ij}^{\ell}.
		\]
		The last term vanishes by the holonomy condition. Hence
		\[
		\bar L(h,\tau)
		\leq
		\sum_{i,j,\ell}
		L\left(
		x_i,\frac{x_j+\ell-x_i}{\tau}
		\right)\mu_{ij}^{\ell}.
		\]
		Since \(\mu\in\mathcal H_{h,\tau}\) is arbitrary, it follows that
		\begin{equation}\label{eq:critical-value-lower-action}
			\bar L(h,\tau)
			\leq
			\min_{\mu\in\mathcal H_{h,\tau}}
			\sum_{i,j,\ell}
			L\left(
			x_i,\frac{x_j+\ell-x_i}{\tau}
			\right)\mu_{ij}^{\ell}.
		\end{equation}

		We prove the reverse inequality by constructing a holonomic measure supported
		on calibrated transitions. For every \(x_j\in\mathcal G_h\), choose a
		minimizing pair \((x_i,\ell)\) in \eqref{eq:fully-discrete-ergodic-equation}, so that
		\[
		u_{h,\tau}(x_j)+\tau\bar L(h,\tau)
		=
		u_{h,\tau}(x_i)
		+
		\widehat A_{\tau}^h(x_i,x_j,\ell).
		\]
		Starting from an arbitrary grid point and repeatedly choosing a minimizing
		predecessor produces a sequence of grid points. Since \(\mathcal G_h\) is
		finite, this sequence eventually contains a cycle. Thus there exist
		\(x_0,\ldots,x_{q-1}\in\mathcal G_h\) and
		\(\ell_0,\ldots,\ell_{q-1}\in\mathbb Z^d\), with indices understood modulo
		\(q\), such that
		\begin{equation}\label{eq:calibrated-cycle}
			u_{h,\tau}(x_r)+\tau\bar L(h,\tau)
			=
			u_{h,\tau}(x_{r+1})
			+
			\widehat A_{\tau}^h(x_{r+1},x_r,\ell_r),
			\qquad r=0,\ldots,q-1.
		\end{equation}

		Define the empirical measure of the cycle by
		$\mu_{ij}^{\ell}:=\frac1q\#\{r:(x_i,x_j,\ell)=(x_{r+1},x_r,\ell_r)\}.$
		It is a probability measure, and the cyclic indexing gives equal incoming and
		outgoing masses at every node; hence $\mu\in\mathcal H_{h,\tau}$.
		Summing \eqref{eq:calibrated-cycle} over \(r\), the terms involving
		\(u_{h,\tau}\) telescope, and we obtain
		\[
		q\tau\bar L(h,\tau)
		=
		\tau
		\sum_{r=0}^{q-1}
		L\left(
		x_{r+1},
		\frac{x_r+\ell_r-x_{r+1}}{\tau}
		\right).
		\]
		Dividing by \(q\tau\) gives
		\[
		\bar L(h,\tau)
		=
		\sum_{i,j,\ell}
		L\left(
		x_i,\frac{x_j+\ell-x_i}{\tau}
		\right)\mu_{ij}^{\ell}.
		\]
		Consequently,
		\[
		\min_{\mu\in\mathcal H_{h,\tau}}
		\sum_{i,j,\ell}
		L\left(
		x_i,\frac{x_j+\ell-x_i}{\tau}
		\right)\mu_{ij}^{\ell}
		\leq
		\bar L(h,\tau).
		\]
		Combining this inequality with
		\eqref{eq:critical-value-lower-action} proves the result.
	\end{proof}

	\begin{remark} \label{rem:fd-mather-existence} 
		The cycle measure constructed in the proof of Proposition \ref{prop:fd-critical-duality} attains the infimum in \eqref{eq:fd-Mather-LP-winding}; hence fully--discrete Mather measures exist for every \(h,\tau>0\). 
		For any fully--discrete Mather measure, the Bellman inequality is an equality on every transition in its support. Indeed, holonomy cancels the increments of a critical solution, while minimality makes the integral of the nonnegative Bellman residual equal to zero. Thus every transition in the support realizes the minimum in \eqref{eq:fully-discrete-ergodic-equation}. 
		Proposition \ref{prop:fd-lip-bound} bounds its velocity by \(D\), so only finitely many transitions may carry positive mass. Therefore \(\widetilde{\mathcal M}_{h,\tau}\) is finite and the closure in its definition is redundant. \end{remark}
	\begin{remark}
		The winding labels retain the homotopy information of the transitions. In particular, one may associate with a fully--discrete holonomic measure $\mu$ the rotation vector
		\[ \rho_{h,\tau}(\mu) := \sum_{i,j,\ell}v_{ij}^{\ell}\mu_{ij}^{\ell} = \frac{1}{\tau} \sum_{i,j,\ell}\ell\,\mu_{ij}^{\ell}, \]
		where the second identity follows from the holonomy condition. This makes it natural to introduce fully--discrete analogues of Mather's $\alpha$ and $\beta$ functions (see \cite{Mather1991}) by minimizing the discrete action under prescribed cohomology or rotation constraints. We do not develop this extension here, but it shows that the fully--discrete theory also has an intrinsic variational meaning.
	\end{remark}
	\section{Convergence of Fully--Discrete Mather Measures and Sets}
	\label{sec:convergence-fd-mather}
	We now study the limit $h,\tau\to0$ of fully--discrete Mather measures and sets.
	\begin{proposition}
		\label{prop:fd-mather-measures-to-continuous}
		Assume \eqref{eq:conv_parameter}. 	Let \(\mu^{h_n,\tau_n}\in\mathcal H_{h_n,\tau_n}\) be fully--discrete Mather
		measures and set
		\[
		\widetilde\mu^{h_n,\tau_n}
		:=
		\mathcal R_{h_n,\tau_n}\mu^{h_n,\tau_n}.
		\]
		Then the family \((\widetilde\mu^{h_n,\tau_n})_n\) is tight in
		\(\mathcal P(\mathbb T^d\times\mathbb R^d)\). Moreover, every narrow
		accumulation point is a continuous Mather measure.
	\end{proposition}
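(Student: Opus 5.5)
The plan has four steps: tightness from a uniform velocity bound, closedness of the limit via a Taylor expansion of the holonomy identity, minimality of the limit action via lower semicontinuity and Theorem~\ref{thm:continuous-fully-discrete-critical-error}, and finally the identification of the minimal action with $-\alpha(H)$.

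\emph{Tightness.} By Remark~\ref{rem:fd-mather-existence}, every transition in the support of a fully--discrete Mather measure realizes the minimum in \eqref{eq:fully-discrete-ergodic-equation}. Since \eqref{eq:conv_parameter} gives $h_n/\tau_n\le\eta_0$ and $\tau_n\le\tau_0$ for $n$ large, Proposition~\ref{prop:fd-lip-bound} yields $|v_{ij}^{\ell}|\le D$ on the support, with $D$ independent of $n$. Hence every $\widetilde\mu^{h_n,\tau_n}$ is supported in the fixed compact set $\mathbb T^d\times\overline B_D$. Tightness follows immediately, and so does narrow precompactness by Prokhorov. Any accumulation point $\mu$ is again supported in $\mathbb T^d\times\overline B_D$. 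This uniform support bound will let us test against unbounded functions such as $v\cdot D\phi(x)$ and $L$ without any uniform integrability issues.

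\emph{Closedness of the limit.} Fix $\phi\in C^2(\mathbb T^d)$; $C^2$ suffices, since $\mu$ has compact support and any $C^1$ function can be approximated in $C^1$ by $C^2$ functions. Apply the weak holonomy \eqref{eq:fd-holonomy-weak-winding} with $\varphi=\phi|_{\mathcal G_{h_n}}$. Because $\phi$ is periodic, $\phi(x_j)=\phi(x_j+\ell)=\phi(x_i+\tau_n v_{ij}^\ell)$ on the lift. A Taylor expansion then gives
\[
\phi(x_j)-\phi(x_i)=\tau_n\,v_{ij}^{\ell}\cdot D\phi(x_i)+O(\tau_n^2D^2\|D^2\phi\|_\infty).
\]
Dividing by $\tau_n$ and summing against $\mu^{h_n,\tau_n}$ yields
\[
\Bigl|\int v\cdot D\phi(x)\,d\widetilde\mu^{h_n,\tau_n}\Bigr|\le C\tau_n\to0 .
\]
The integrand is continuous, and it is bounded on the common compact support. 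Narrow convergence therefore gives $\int v\cdot D\phi\,d\mu=0$, so $\mu$ is closed.

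\emph{Minimality.} The function $L$ is continuous and bounded on $\mathbb T^d\times\overline B_D$. Along the subsequence we therefore get
\[
\int L\,d\widetilde\mu^{h_n,\tau_n}=\mathcal A_{h_n,\tau_n}(\mu^{h_n,\tau_n})\to\int L\,d\mu .
\]
By Proposition~\ref{prop:fd-critical-duality}, the left side equals $\bar L(h_n,\tau_n)$. By Theorem~\ref{thm:continuous-fully-discrete-critical-error} together with \eqref{eq:conv_parameter}, it converges to $-\alpha(H)$. Hence $\int L\,d\mu=-\alpha(H)$. Since $\mu$ is a closed probability measure and $-\alpha(H)$ is the minimal action over closed measures, $\mu$ is a Mather measure.

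The step needing most care is the identification in the last paragraph. It relies on the characterization $\min_{\mu\ \mathrm{closed}}\int L\,d\mu=-\alpha(H)$ recalled in Section~2, rather than proving that inequality directly. If one preferred a self-contained argument for the inequality $\int L\,d\nu\ge-\alpha(H)$ for closed $\nu$, I would proceed in two stages. First, take a critical subsolution $u$ and mollify it to get $u_\varepsilon$ with $H(x,Du_\varepsilon)\le\alpha(H)+o(1)$. Second, apply Fenchel's inequality $L(x,v)\ge Du_\varepsilon\cdot v-H(x,Du_\varepsilon)$ and integrate, using closedness to kill the term $Du_\varepsilon\cdot v$. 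Everything else reduces to the uniform velocity bound from Proposition~\ref{prop:fd-lip-bound}, which is precisely what makes all the test functions admissible.
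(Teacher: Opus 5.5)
Your proof is correct, but it obtains compactness differently from the paper. The paper never uses the fact that exact minimizers live on calibrated transitions. It obtains tightness only from the a priori action bound $\int L\,d\widetilde\mu_n\le\max_x L(x,0)$, which comes from the diagonal transition, together with superlinearity; this gives uniform integrability of $|v|$. Closedness then comes from the exact identity $\varphi(x_j)-\varphi(x_i)=\tau_n\int_0^1 D\varphi(x_i+s\tau_n v)\cdot v\,ds$ for $C^1$ test functions: the paper passes to the limit on $\{|v|\le R\}$ and controls the tails uniformly in $n$. Minimality uses lower semicontinuity of the action combined with $\int L\,d\mu\ge-\alpha(H)$ for closed $\mu$.

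You instead combine Remark~\ref{rem:fd-mather-existence} with the velocity bound of Proposition~\ref{prop:fd-lip-bound}. This confines all supports, for $n$ large, to the fixed compact set $\mathbb T^d\times\overline B_D$; the finitely many initial terms are harmless for tightness, which is worth saying explicitly. Everything then becomes bounded and continuous. A Taylor expansion with $C^2$ test functions plus density in $C^1$ gives closedness. You get actual convergence of the action instead of a one-sided semicontinuity inequality.

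Your route is lighter on measure theory. However, it leans on the heavier discrete Lipschitz and velocity estimate, and it is specific to exact minimizers. The paper's route needs only a uniform bound on the action, so it transfers verbatim to $\varepsilon_n$-minimizing holonomic measures. For those measures, small mass may sit on fast, non-calibrated transitions, and no uniform support bound is available. That is exactly the situation in Proposition~\ref{prop:near-minimizing-outer-consistency}, which reuses this argument to show that the limit measure is closed.
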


	\begin{proof}
		Set
		$\widetilde\mu_n := \mathcal R_{h_n,\tau_n}\mu^{h_n,\tau_n}.$
		Since the diagonal transition \(x_i\to x_i\), with \(\ell=0\), is
		admissible, the minimizing property gives
		$\int L(x,v)\,d\widetilde\mu_n(x,v) \le \max_{x\in\mathbb T^d}L(x,0) =:C.$

		Since \(L\) is uniformly superlinear in \(v\) and bounded from below,
		there exist a nondecreasing superlinear function
		\(\Theta:[0,+\infty)\to[0,+\infty)\) and a constant \(C_0\) such that
		\[
		\Theta(|v|)\le L(x,v)+C_0
		\qquad
		\text{for all }(x,v)\in\mathbb T^d\times\mathbb R^d.
		\]
		Consequently,
		\begin{equation}
			\label{eq:uniform-integrability-velocities}
			\sup_n\int\Theta(|v|)\,d\widetilde\mu_n(x,v)<+\infty.
		\end{equation}
		In particular, the velocities are uniformly integrable:
		$\lim_{R\to\infty} \sup_n \int_{\{|v|>R\}}|v|\,d\widetilde\mu_n=0.$
		Since \(\mathbb T^d\) is compact, this also implies tightness. Thus,
		up to a subsequence,
		\[
		\widetilde\mu_n\rightharpoonup\mu
		\qquad\text{narrowly in }
		\mathcal P(\mathbb T^d\times\mathbb R^d).
		\]
		By lower semicontinuity, \(\int\Theta(|v|)\,d\mu<+\infty\), so the same
		tail control holds for \(\mu\).

		We prove that \(\mu\) is closed. For
		\(\varphi\in C^1(\mathbb T^d)\), discrete holonomy gives
		\[
		\int F_n(x,v)\,d\widetilde\mu_n(x,v)=0,
		\qquad
		F_n(x,v):=
		\int_0^1D\varphi(x+s\tau_n v)\cdot v\,ds.
		\]
		For every fixed \(R>0\),
		$F_n(x,v)\longrightarrow D\varphi(x)\cdot v$
		uniformly on \(\mathbb T^d\times\overline B_R\). Hence narrow
		convergence yields
		\[
		\lim_{n\to\infty}
		\int_{\{|v|\le R\}}F_n\,d\widetilde\mu_n
		=
		\int_{\{|v|\le R\}}D\varphi(x)\cdot v\,d\mu,
		\]
		using, if necessary, a continuous cutoff in place of the
		characteristic function of \(\{|v|\le R\}\).

		Moreover,
		$|F_n(x,v)| \le \|D\varphi\|_\infty |v|,$
		so \eqref{eq:uniform-integrability-velocities} allows us to let
		\(R\to\infty\), uniformly in \(n\). We conclude that
		$\int_{\mathbb T^d\times\mathbb R^d} D\varphi(x)\cdot v\,d\mu(x,v)=0,$
		and therefore \(\mu\) is closed.
		Finally, by Proposition \ref{prop:fd-critical-duality},
		\[
		\int L(x,v)\,d\widetilde\mu_n(x,v)
		=
		\bar L(h_n,\tau_n)
		\longrightarrow-\alpha(H).
		\]
		Since \(L\) is continuous and bounded from below, lower
		semicontinuity gives
		\[
		\int L(x,v)\,d\mu(x,v)
		\le
		\liminf_{n\to\infty}
		\int L(x,v)\,d\widetilde\mu_n(x,v)
		=
		-\alpha(H).
		\]
		On the other hand, \(\mu\) is closed, and hence
		$\int L(x,v)\,d\mu(x,v)\ge-\alpha(H).$
		Thus equality holds, and \(\mu\) is a continuous Mather measure.
	\end{proof}

	\subsection{The Kuratowski upper limit}

	We now study the Kuratowski limits of the fully--discrete Mather sets. Recall that, for a sequence $\{A_k\}_{k\in\mathbb N}$ in a metric space,
	\[
	x\in\limsup_{k\to\infty}A_k \quad\Longleftrightarrow\quad \exists\,k_j\to\infty,\ \exists\,x_{k_j}\in A_{k_j} \text{ such that }x_{k_j}\to x,
	\]
	and
	\[
	x\in\liminf_{k\to\infty}A_k \quad\Longleftrightarrow\quad \exists\,x_k\in A_k \text{ such that }x_k\to x.
	\]
	Kuratowski convergence to $A$ means that both limits coincide with $A$. \par
	We first consider the upper limit. Although Proposition \ref{prop:fd-mather-measures-to-continuous} gives convergence of minimizing measures, narrow convergence does not control their supports from above. Accordingly, the calibration argument below gives an upper inclusion into the continuous Ma\~n\'e set, rather than into the Aubry or Mather set.
	 We   introduce the strengthened scaling
	\begin{equation}\label{eq:conv_parameter_strong}
		\tau_n\longrightarrow0,
		\qquad
		h_n\le C_\ast\,\tau_n^{2}
		\quad\text{for some constant }C_\ast>0 .
	\end{equation}
	Since $h_n/\tau_n\le C_\ast\tau_n\to0$, condition
	\eqref{eq:conv_parameter_strong} implies \eqref{eq:conv_parameter}. As
	explained in Appendix \ref{app:technical-lemma} and in Section
	\ref{sec:implementation}, the quadratic regime is precisely the one in which
	the spatial contribution to the discrete Euler--Lagrange estimate does not
	dominate the time-stepping one
	\begin{proposition}
		\label{prop:fd-mather-limsup-mane}
		Under \eqref{eq:conv_parameter_strong},
		\[
		\limsup_{n\to\infty}\widetilde{\mathcal M}_{h_n,\tau_n}
		\subset\widetilde{\mathcal N}_L.
		\]
	\end{proposition}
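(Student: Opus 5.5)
Let $(x,v)\in\limsup_n\widetilde{\mathcal M}_{h_n,\tau_n}$. By definition there is a subsequence, not relabeled, and transitions $(x_{i_n},x_{j_n},\ell_n)$ in the support of some fully--discrete Mather measure $\mu^n$ such that $(x_{i_n},v^{\ell_n}_{i_nj_n})\to(x,v)$. The plan is to build a calibrated curve through $(x,v)$ for some critical solution $u$ as a limit of discrete calibrated orbits. Three ingredients are needed: a bi-infinite discrete calibrated chain through the given transition, compactness of the normalized critical solutions, and passage to the limit in the calibration identity together with a velocity-convergence statement.

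\emph{Step 1: discrete calibrated chains.} By Remark~\ref{rem:fd-mather-existence}, every transition in $\operatorname{spt}\mu^n$ is calibrated for any critical solution $u_n:=u_{h_n,\tau_n}$. Since $\mu^n$ is holonomic, its support graph has in-degree equal to out-degree in mass. Hence each supported edge lies on a cycle of supported edges, by the standard circulation decomposition of a holonomic $\mu^n$ into cycle measures. Following this cycle periodically gives a bi-infinite chain $(z^n_k,\ell^n_k)_{k\in\mathbb Z}$ with $z^n_0=x_{i_n}$ and $z^n_1=x_{j_n}$. Along it
\[
u_n(z^n_b)-u_n(z^n_a)=\sum_{k=a}^{b-1}\tau_n\bigl[L(z^n_k,v^n_k)-\bar L(h_n,\tau_n)\bigr],
\]
where $v^n_k:=(z^n_{k+1}+\ell^n_k-z^n_k)/\tau_n$. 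Proposition~\ref{prop:fd-lip-bound} gives $|v^n_k|\le D$.

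\emph{Step 2: limits.} Let $\gamma_n$ be the piecewise affine lifted interpolation with $\gamma_n(k\tau_n)=z^n_k$, so that $\gamma_n$ is $D$-Lipschitz. By Ascoli on compact time intervals and a diagonal argument, $\gamma_n\to\gamma$ locally uniformly. By Proposition~\ref{prop:convergence-critical-solutions}, the interpolants $\widetilde u_n$ converge uniformly to a critical solution $u$, and $\bar L\to-\alpha(H)$ by Theorem~\ref{thm:continuous-fully-discrete-critical-error}. The left-endpoint sum is a Riemann-type sum, and lower semicontinuity of the action under weak convergence of $\dot\gamma_n$ (convexity of $L$ in $v$) gives
\[
u(\gamma(b))-u(\gamma(a))\ge\int_a^bL(\gamma,\dot\gamma)\,ds+\alpha(H)(b-a).
\]
The reverse inequality holds because $u$ is dominated by $L+\alpha(H)$. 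So $\gamma$ is $(u,L,\alpha(H))$-calibrated and hence a $C^2$ Euler--Lagrange solution, with $\gamma(0)=x$.

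\emph{Step 3 (main obstacle): $\dot\gamma(0)=v$.} Uniform convergence of $\gamma_n$ gives no pointwise control on the discrete velocities. I need a discrete Euler--Lagrange estimate showing that consecutive calibrated velocities satisfy $|v^n_{k+1}-v^n_k|\le C(\tau_n+h_n/\tau_n)$. I would try to obtain this from the first-order optimality of $z^n_{k+1}$ as an intermediate point: perturbing it by a grid step $h_n$ and using the calibration chain gives a discrete Euler--Lagrange relation with spatial error $O(h_n/\tau_n)$ per step. Combined with strict convexity of $L$, this should yield the difference bound. This is the technical lemma the paper defers to Appendix~\ref{app:technical-lemma}, and it is where \eqref{eq:conv_parameter_strong} enters: $h_n\le C_\ast\tau_n^2$ makes the accumulated drift over $O(1/\tau_n)$ steps bounded, so the $v^n_k$ are equi-Lipschitz in $k\tau_n$. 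Then $\dot\gamma_n$ converges locally uniformly, so $\dot\gamma(0)=\lim v^n_0=v$. This gives $(x,v)\in\widetilde{\mathcal I}(u)\subset\widetilde{\mathcal N}_L$.
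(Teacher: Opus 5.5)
Your proposal is correct and follows essentially the paper's route: a bi-infinite calibrated chain through the supported edge obtained from holonomy, uniform convergence of normalized critical solutions, and the discrete Euler--Lagrange estimate $|v^n_{k+1}-v^n_k|\le C(\tau_n+h_n/\tau_n)\le C\tau_n$, derived from the minimality of $z^n_{k+1}$ for the two-step action on the grid exactly as in Lemma~\ref{lem:compactness-pointed-calibrated-chains}. The only difference is that you prove calibration of $\gamma$ by lower semicontinuity plus domination, whereas the paper passes to the limit directly using the uniformly convergent velocities; both are valid, and in your version \eqref{eq:conv_parameter_strong} is needed only to identify $\dot\gamma(0)=v$.
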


	\begin{proof}
		By the definition of the Kuratowski upper limit, it suffices to consider a
		subsequence, not relabeled, and points
		$(x_n,v_n)\in\widetilde{\mathcal M}_{h_n,\tau_n}$ converging to
		$(x,v)$. For every \(n\), there exists a fully--discrete Mather
		measure \(\mu_n\) whose support contains a transition
		$(x_n,x_n^+,\ell_n)$ satisfying
		$v_n=\frac{x_n^++\ell_n-x_n}{\tau_n}.$

		Let \(u_n:=u_{h_n,\tau_n}\) be normalized fully--discrete critical
		solutions and set \(\bar L_n:=\bar L(h_n,\tau_n)\). Since
		\eqref{eq:conv_parameter_strong} implies \eqref{eq:conv_parameter}, we may
		apply Proposition \ref{prop:convergence-critical-solutions} and extract a
		further subsequence, again not relabeled, along which the piecewise affine
		interpolations \(\widetilde u_n\) converge uniformly on \(\mathbb T^d\)
		to a critical viscosity solution \(u\).  Define the Bellman defect by
		\[
		G_n(x_i,x_j,\ell)
		:=
		u_n(x_i)
		+\tau_nL\left(x_i,\frac{x_j+\ell-x_i}{\tau_n}\right)
		-u_n(x_j)-\tau_n\bar L_n.
		\]
		The Bellman inequality gives \(G_n\ge0\). Since \(\mu_n\) is holonomic
		and minimizing, Proposition \ref{prop:fd-critical-duality} yields
		\[
		\sum_{i,j,\ell}G_n(x_i,x_j,\ell)\mu_{n,ij}^{\ell}
		=
		\tau_n\bigl(\mathcal A_{h_n,\tau_n}(\mu_n)-\bar L_n\bigr)
		=0.
		\]
		Therefore \(G_n=0\) on \(\supp(\mu_n)\).

		The holonomy condition implies that every positive-mass edge can be
		extended both forward and backward through positive-mass edges.
		Hence $(x_n,x_n^+,\ell_n)$ belongs to a bi-infinite chain
		$(x_k^n,x_{k+1}^n,\ell_k^n)_{k\in\mathbb Z}$ contained in
		\(\supp(\mu_n)\), with
		\[
		x_0^n=x_n,\qquad x_1^n=x_n^+,\qquad \ell_0^n=\ell_n.
		\]
		Since \(G_n\) vanishes on the support, every edge of this chain is
		calibrated. Moreover, its initial velocity is
		$v_0^n = \frac{x_1^n+\ell_0^n-x_0^n}{\tau_n} = v_n.$
		Thus $(x_0^n,v_0^n)\to(x,v)$. Lemma
		\ref{lem:compactness-pointed-calibrated-chains}, applied along the
		subsequence selected above, provides an Euler--Lagrange trajectory
		\(\gamma\) which is $(u,L,\alpha(H))$--calibrated on $\mathbb R$ and
		satisfies \(\gamma(0)=x\) and \(\dot\gamma(0)=v\). Consequently, by \eqref{eq:aubry-mane-sets},
		$(x,v)\in\widetilde{\mathcal I}(u)\subset\widetilde{\mathcal N}_L$,
		which proves the claim.
	\end{proof}

	\begin{remark}
		The argument yields the Ma\~n\'e set since the limiting trajectory is
		calibrated by one critical solution $u$. Membership in the Aubry set would
		require the stronger {\it staticity property}, equivalently calibration by every backward weak KAM solution. This does not follow from the compactness argument above.
	\end{remark}

	\subsection{The Kuratowski lower limit}

	The lower inclusion requires approximating every point of the continuous Mather set by supports of fully--discrete minimizing measures.

	\begin{proposition}
		\label{prop:fd-mather-liminf-unique}
		Assume \eqref{eq:conv_parameter} and that the continuous Mather
		measure \(\mu\) is unique. Then
		\[
		\supp(\mu)=\widetilde{\mathcal M}_L
		\subset
		\liminf_{n\to\infty}\widetilde{\mathcal M}_{h_n,\tau_n}.
		\]
	\end{proposition}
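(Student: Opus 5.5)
The plan is to combine Proposition \ref{prop:fd-mather-measures-to-continuous} with the uniqueness hypothesis and a standard support argument for narrowly convergent sequences.

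First, the equality $\supp(\mu)=\widetilde{\mathcal M}_L$. If $\mu$ is the only Mather measure, then the union in the definition of $\widetilde{\mathcal M}_L$ consists of the single closed set $\supp(\mu)$. The closure is therefore redundant.

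Next, for each $n$ fix a fully--discrete Mather measure $\mu^{h_n,\tau_n}$, which exists by Remark \ref{rem:fd-mather-existence}, and set $\widetilde\mu_n:=\mathcal R_{h_n,\tau_n}\mu^{h_n,\tau_n}$. By Proposition \ref{prop:fd-mather-measures-to-continuous} the sequence $(\widetilde\mu_n)$ is tight and every narrow accumulation point is a continuous Mather measure, hence equal to $\mu$. A tight sequence in which every convergent subsequence has the same limit converges as a whole, so $\widetilde\mu_n\rightharpoonup\mu$ along the full sequence. This full-sequence convergence matters, because the lower Kuratowski limit requires approximants for every $n$, not only along a subsequence. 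Observe also that $\supp(\widetilde\mu_n)\subset\widetilde{\mathcal M}_{h_n,\tau_n}$: the atoms of $\widetilde\mu_n$ are exactly the points $(x_i,v_{ij}^\ell)$ with $\mu_{ij}^{\ell}>0$.

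Now fix $(x,v)\in\supp(\mu)$ and $k\in\mathbb N$. Let $U_k$ be the open ball of radius $1/k$ around $(x,v)$ in $\mathbb T^d\times\mathbb R^d$. Since $(x,v)\in\supp(\mu)$, we have $\mu(U_k)>0$, and the portmanteau theorem for open sets gives $\liminf_n\widetilde\mu_n(U_k)\ge\mu(U_k)>0$. Hence there is $N_k$ such that $\widetilde\mu_n(U_k)>0$ for all $n\ge N_k$. Choose the $N_k$ strictly increasing. For $N_k\le n<N_{k+1}$, pick an atom $(x_n,v_n)\in\supp(\widetilde\mu_n)\cap U_k\subset\widetilde{\mathcal M}_{h_n,\tau_n}$. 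For the finitely many $n<N_1$, pick any point of the nonempty finite set $\widetilde{\mathcal M}_{h_n,\tau_n}$. The resulting sequence satisfies $(x_n,v_n)\to(x,v)$, so $(x,v)\in\liminf_n\widetilde{\mathcal M}_{h_n,\tau_n}$.

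The main point requiring care is upgrading subsequential convergence to convergence of the whole sequence. This rests on uniqueness of the limit together with tightness: any subsequence has a further narrowly convergent subsequence whose limit is a Mather measure, hence equals $\mu$. The diagonal selection of the $N_k$ is routine, and no scaling stronger than \eqref{eq:conv_parameter} is needed, since only narrow convergence of measures is used.
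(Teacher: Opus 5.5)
Your proposal is correct and follows the paper's proof step for step: full-sequence narrow convergence from tightness plus uniqueness, the portmanteau theorem on the balls $B_{1/k}(z)$, and a diagonal selection with strictly increasing $N_k$ that picks atoms in $\supp(\widetilde\mu_n)\subset\widetilde{\mathcal M}_{h_n,\tau_n}$. You also spell out two points the paper leaves implicit, namely why $\supp(\mu)=\widetilde{\mathcal M}_L$ and the sub-subsequence argument for convergence of the whole sequence; both are handled correctly.
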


	\begin{proof}
		For each \(n\), let \(\mu^{h_n,\tau_n}\) be a fully--discrete Mather
		measure and set
		$\widetilde\mu_n := \mathcal R_{h_n,\tau_n}\mu^{h_n,\tau_n}.$
		By Proposition \ref{prop:fd-mather-measures-to-continuous}, every
		accumulation point of \((\widetilde\mu_n)_n\) is a continuous Mather
		measure. Since the latter is unique, the whole sequence converges
		narrowly to \(\mu\).

		Let \(z\in\supp(\mu)\). For every \(m\ge1\),
		\(\mu(B_{1/m}(z))>0\); hence, by the Portmanteau theorem, there exists
		\(N_m\) such that
		\[
		\supp(\widetilde\mu_n)\cap B_{1/m}(z)\neq\emptyset
		\qquad\text{for every }n\ge N_m.
		\]
		We may choose \((N_m)_m\) strictly increasing. For
		\(N_m\le n<N_{m+1}\), choose
		$z_n\in\supp(\widetilde\mu_n)\cap B_{1/m}(z).$
		After choosing the finitely many remaining terms arbitrarily, we
		obtain \(z_n\to z\). Since
		\[
		\supp(\widetilde\mu_n)
		\subset\widetilde{\mathcal M}_{h_n,\tau_n},
		\]
		it follows that
		\(z\in\liminf_{n\to\infty}\widetilde{\mathcal M}_{h_n,\tau_n}\).
		The conclusion follows because \(z\in\supp(\mu)\) was arbitrary.
	\end{proof}

	Combining the upper and lower Kuratowski inclusions, we obtain full convergence whenever the continuous Ma\~n\'e and Mather sets coincide.
	\begin{corollary} \label{cor:fd-mather-kuratowski-convergence}
		Assume \eqref{eq:conv_parameter_strong}, that the continuous Mather measure is unique, and that
		$\widetilde{\mathcal N}_L=\widetilde{\mathcal M}_L$. 
		Then
		\[ \widetilde{\mathcal M}_{h_n,\tau_n} \xrightarrow{K} \widetilde{\mathcal M}_L \qquad\text{as }n\to\infty. \]
	\end{corollary}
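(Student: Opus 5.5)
The corollary follows by combining the two Kuratowski inclusions already proved, together with the elementary fact that the lower limit of a sequence of sets is contained in its upper limit. We work along the full sequence $(h_n,\tau_n)$ satisfying \eqref{eq:conv_parameter_strong}. As observed after \eqref{eq:conv_parameter_strong}, this scaling implies \eqref{eq:conv_parameter}, since $h_n/\tau_n\le C_\ast\tau_n\to0$. Hence both Proposition \ref{prop:fd-mather-limsup-mane} and Proposition \ref{prop:fd-mather-liminf-unique} apply.

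\textbf{Upper inclusion.} Proposition \ref{prop:fd-mather-limsup-mane} gives
\[
\limsup_{n\to\infty}\widetilde{\mathcal M}_{h_n,\tau_n}\subset\widetilde{\mathcal N}_L=\widetilde{\mathcal M}_L,
\]
where the equality is the hypothesis of the corollary.

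\textbf{Lower inclusion.} Since the continuous Mather measure $\mu$ is unique, Proposition \ref{prop:fd-mather-liminf-unique} gives
\[
\widetilde{\mathcal M}_L=\supp(\mu)\subset\liminf_{n\to\infty}\widetilde{\mathcal M}_{h_n,\tau_n}.
\]

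\textbf{Closing the chain.} By the definitions recalled at the start of the subsection, $\liminf_n A_n\subset\limsup_n A_n$. Indeed, if $x_n\in A_n$ with $x_n\to x$ for the whole sequence, then the same points along any subsequence $k_j\to\infty$ witness $x\in\limsup_n A_n$. This yields
\[
\widetilde{\mathcal M}_L\subset\liminf_{n\to\infty}\widetilde{\mathcal M}_{h_n,\tau_n}\subset\limsup_{n\to\infty}\widetilde{\mathcal M}_{h_n,\tau_n}\subset\widetilde{\mathcal M}_L,
\]
so all these sets coincide with $\widetilde{\mathcal M}_L$. This is exactly Kuratowski convergence to $\widetilde{\mathcal M}_L$.

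There is no real obstacle: the only point to check is that the subsequence extractions inside Proposition \ref{prop:fd-mather-limsup-mane} do not weaken the conclusion. They do not. Its proof starts from an arbitrary subsequence realizing a point of the upper limit and shows that this point lies in $\widetilde{\mathcal N}_L$; the limiting critical solution $u$ may depend on the subsequence, but membership in $\widetilde{\mathcal N}_L$ does not. The inclusion therefore holds for the upper limit of the whole sequence.
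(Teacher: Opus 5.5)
Your proposal is correct and is the paper's proof: chain the upper inclusion of Proposition~\ref{prop:fd-mather-limsup-mane} (using $\widetilde{\mathcal N}_L=\widetilde{\mathcal M}_L$) with the lower inclusion of Proposition~\ref{prop:fd-mather-liminf-unique}, then close with $\liminf\subset\limsup$. Your extra checks — that \eqref{eq:conv_parameter_strong} implies \eqref{eq:conv_parameter}, and that the subsequence-dependent limit $u$ does not affect membership in $\widetilde{\mathcal N}_L$ — are accurate; they make explicit what the paper leaves implicit.
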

\begin{proof} By Propositions \ref{prop:fd-mather-limsup-mane} and \ref{prop:fd-mather-liminf-unique}, 
		\[
		 \limsup_{n\to\infty}\widetilde{\mathcal M}_{h_n,\tau_n} \subset \widetilde{\mathcal N}_L = \widetilde{\mathcal M}_L \subset \liminf_{n\to\infty}\widetilde{\mathcal M}_{h_n,\tau_n}. 
		\]
 Since the lower Kuratowski limit is contained in the upper one, all the sets above coincide. \end{proof}

\begin{remark}
	The assumptions of Corollary
	\ref{cor:fd-mather-kuratowski-convergence} hold, for instance, for
	\[
	L_\omega(x,v):=\frac12|v-\omega|^2,
	\]
	where \(\omega\in\mathbb R^d\) is nonresonant, that is, \(k\cdot\omega\neq0\) for every \(k\in\mathbb Z^d\setminus\{0\}\). The associated
	Kronecker flow is uniquely ergodic, and the unique Mather measure is $\mathcal L_{\mathbb T^d}^d\otimes\delta_\omega$.	Moreover, every semistatic curve satisfies
	\(\dot\gamma=\omega\); hence
	\[
	\widetilde{\mathcal N}_{L_\omega}
	=
	\widetilde{\mathcal M}_{L_\omega}
	=
	\mathbb T^d\times\{\omega\}.
	\]
\end{remark}


\begin{remark}
	\label{rem:genericity-of-the-assumptions}
	The two hypotheses of Corollary
	\ref{cor:fd-mather-kuratowski-convergence} are less restrictive than they
	may appear. First, they reduce to a single one: since each static class
	supports at least one ergodic minimizing measure, and a semistatic orbit
	which is not static connects two \emph{distinct} static classes
	\cite{ContrerasDelgadoIturriaga,Bernard2002}, uniqueness of the Mather
	measure already forces
	$\widetilde{\mathcal A}_L=\widetilde{\mathcal N}_L$; what is really assumed
	besides uniqueness is that the Mather measure have full support in the
	Aubry set.

	Second, this situation is generic. Since
	$H^1(\mathbb T^d;\mathbb R)\simeq\mathbb R^d$, each class is represented by
	a constant $1$--form, $L_c(x,v):=L(x,v)-c\cdot v$ is again Tonelli, and all
	our results apply verbatim to $L_c$; the winding labels make the class
	exactly representable on the grid, since
	$\tau L_c(x_i,v_{ij}^\ell)=\tau L(x_i,v_{ij}^\ell)-c\cdot(x_j+\ell-x_i)$,
	so that no further discretization error is introduced. Recall now that a
	property is generic in the sense of Ma\~n\'e if, for every Tonelli
	Lagrangian $L$, it holds for $L-u$ for all $u$ in a residual subset of
	$C^\infty(\mathbb T^d)$; potentials are precisely the perturbations that a
	scheme such as ours sees. Ma\~n\'e proved that, for each fixed $c$,
	generically the minimizing measure is unique and uniquely ergodic
	\cite{Mane1996}, and Bernard and Contreras that a single residual set of
	potentials bounds, simultaneously for all $c$, the number of ergodic
	minimizing measures and of static classes by $1+d$
	\cite{BernardContreras2008}. Building on these results, Zhang
	\cite{Zhang2017} showed that for a generic Tonelli Lagrangian there is a
	residual set $\mathcal G^\ast\subset H^1(\mathbb T^d;\mathbb R)$ with
	\[
	\widetilde{\mathcal M}_{L_c}
	=\widetilde{\mathcal A}_{L_c}
	=\widetilde{\mathcal N}_{L_c},
	\qquad
	c\in\mathcal G^\ast,
	\]
	and $\widetilde{\mathcal M}_{L_c}$ uniquely ergodic: exactly the hypotheses
	of Corollary \ref{cor:fd-mather-kuratowski-convergence}. Thus, up to an
	arbitrarily small perturbation of the potential, Kuratowski convergence of
	the fully--discrete Mather sets holds for a dense set of cohomology
	classes, the assumption ruling out only a small family of degenerate
	configurations. Such degeneracies do occur --- for the pendulum at the
	boundary of a flat of $\alpha$, or for the Denjoy sets of
	\cite{Arnaud2011}, one has
	$\widetilde{\mathcal M}_{L_c}\subsetneq\widetilde{\mathcal A}_{L_c}$ --- and
	there Proposition \ref{prop:fd-mather-limsup-mane} only yields the upper
	inclusion into the Ma\~n\'e set. 
\end{remark}


	\section{Near-Minimizing Mass-Threshold Approximations}
	\label{sec:near-minimizing-mass-threshold}

	The fully--discrete Mather set is defined through exact minimizers of the
	discrete action. Although this preserves the intrinsic variational structure
	of the discrete problem, it may fail to approximate the whole continuous
	Mather set. Indeed, when the continuous problem has several minimizing
	components, the spatial discretization may favor only some of them, excluding
	components that are asymptotically minimizing but never exact discrete
	minimizers.

	To reduce this selection effect, we introduce an approximation based on an
	action tolerance $\varepsilon\ge0$ and a local mass threshold $\delta>0$.
	The tolerance retains almost-minimizing components, while the mass threshold
	excludes regions carrying negligible mass.
	This construction is intended as
	an approximation device rather than as an alternative fully--discrete Mather
	theory.

	Let $X:=\mathbb T^d\times\mathbb R^d$, endowed with a product metric $d_X$,
	and recall the fully--discrete action $\mathcal A_{h,\tau}$ introduced in
	\eqref{eq:fd-action-functional}. By Proposition
	\ref{prop:fd-critical-duality},
	$\min_{\mu\in\mathcal H_{h,\tau}}\mathcal A_{h,\tau}(\mu)
	=\bar L(h,\tau)$.

	\begin{definition}
		\label{def:fd-near-minimizing-measures}
		For $\varepsilon\ge0$, define
		\[
		\mathcal H_{h,\tau}^{\varepsilon}
		:=
		\left\{
		\mu\in\mathcal H_{h,\tau}:
		\mathcal A_{h,\tau}(\mu)
		\le
		\bar L(h,\tau)+\varepsilon
		\right\},
		\]
		and let
		$	\widetilde{\mathcal H}_{h,\tau}^{\varepsilon}
		:=
		\left\{
		\mathcal R_{h,\tau}\mu:
		\mu\in\mathcal H_{h,\tau}^{\varepsilon}
		\right\}
		\subset\mathcal P(X)$.
	\end{definition}

	For $\varepsilon=0$, this is the family of reconstructed fully--discrete
	Mather measures. Positive values of $\varepsilon$ also retain components whose
	action differs from the discrete minimum by at most $\varepsilon$.

	Almost-minimality alone, however, does not control supports. Indeed, an $\varepsilon$--minimizing holonomic measure may assign an arbitrarily small amount of mass to nonminimizing transitions, which would nevertheless belong to its support. A positive local mass threshold is therefore needed to exclude such vanishing-mass components. We use a regularized local mass, defined through a continuous kernel, in order to ensure stability with respect to the phase-space point and narrow convergence of measures.
	Let $\kappa:[0,+\infty)\to[0,1]$ be nonincreasing and Lipschitz, with
	$\kappa=1$ on $[0,1/2]$ and $\kappa=0$ on $[1,+\infty)$. For
	$\nu\in\mathcal P(X)$, $r>0$, and $z\in X$, define
	\begin{equation}
		\label{eq:near-min-regularized-local-mass}
		m_{\nu,r}(z)
		:=
		\int_X
		\kappa\left(\frac{d_X(z,w)}r\right)
		\,d\nu(w).
	\end{equation}
	Then
	\begin{equation}
		\label{eq:near-min-regularized-local-mass-bounds}
		\nu(B_{r/2}(z))
		\le
		m_{\nu,r}(z)
		\le
		\nu(B_r(z)).
	\end{equation}

	\begin{definition}
		\label{def:near-minimal-local-mass}
		For $h,\tau,r>0$ and $\varepsilon\ge0$, set
		\[
		Q_{h,\tau,r}^{\varepsilon}(z)
		:=
		\sup_{\widetilde\mu\in
			\widetilde{\mathcal H}_{h,\tau}^{\varepsilon}}
		m_{\widetilde\mu,r}(z).
		\]
		For $0<\delta\le1$, define the near-minimizing mass-threshold approximation
		\begin{equation}
			\label{eq:near-minimizing-mass-threshold-set}
			\mathcal S_{h,\tau}^{\varepsilon,r,\delta}
			:=
			\left\{
			z\in X:
			Q_{h,\tau,r}^{\varepsilon}(z)\ge\delta
			\right\}.
		\end{equation}
	\end{definition}

	Thus, $z\in\mathcal S_{h,\tau}^{\varepsilon,r,\delta}$ if some
	fully--discrete holonomic measure has action gap at most $\varepsilon$ and
	assigns regularized mass at least $\delta$ near $z$. Equivalently, define the local action gap by
	\[
	\Gamma_{h,\tau}^{r,\delta}(z)
	:=
	\inf
	\left\{
	\mathcal A_{h,\tau}(\mu)-\bar L(h,\tau):
	\mu\in\mathcal H_{h,\tau},\
	m_{\mathcal R_{h,\tau}\mu,r}(z)\ge\delta
	\right\},
	\]
	with the convention that the infimum is $+\infty$ if the constraint set is
	empty. Then
	\[
	\mathcal S_{h,\tau}^{\varepsilon,r,\delta}
	=
	\left\{
	z\in X:
	\Gamma_{h,\tau}^{r,\delta}(z)\le\varepsilon
	\right\}.
	\]
	This provides a numerical interpretation of the construction in terms of
	constrained linear optimization.

	\begin{proposition}
		\label{prop:properties-near-minimizing-threshold}
		For every $h,\tau,r>0$, $\varepsilon\ge0$, and $0<\delta\le1$, the following
		properties hold:
		\begin{itemize}
			\item[(i)]
			$\mathcal S_{h,\tau}^{\varepsilon,r,\delta}$ is compact;

			\item[(ii)]
			if $\varepsilon_1\le\varepsilon_2$, then
			$\mathcal S_{h,\tau}^{\varepsilon_1,r,\delta}
			\subset
			\mathcal S_{h,\tau}^{\varepsilon_2,r,\delta}$;

			\item[(iii)]
			if $\delta_1\le\delta_2$, then $\mathcal S_{h,\tau}^{\varepsilon,r,\delta_2}
			\subset
			\mathcal S_{h,\tau}^{\varepsilon,r,\delta_1};$

			\item[(iv)]
			if $r_1\le r_2$, then $	\mathcal S_{h,\tau}^{\varepsilon,r_1,\delta}
			\subset
			\mathcal S_{h,\tau}^{\varepsilon,r_2,\delta}.$
		\end{itemize}
	\end{proposition}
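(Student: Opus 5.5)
We treat the four items separately; (ii)--(iv) are pure monotonicity statements, and the real content is the compactness in (i).

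For (ii), we note that $\varepsilon_1\le\varepsilon_2$ gives $\mathcal H_{h,\tau}^{\varepsilon_1}\subset\mathcal H_{h,\tau}^{\varepsilon_2}$, so the supremum defining $Q^{\varepsilon}_{h,\tau,r}$ is nondecreasing in $\varepsilon$. Hence $Q^{\varepsilon_1}\ge\delta$ implies $Q^{\varepsilon_2}\ge\delta$. For (iii), if $Q\ge\delta_2\ge\delta_1$, the inclusion is immediate. For (iv), since $\kappa$ is nonincreasing and $r_1\le r_2$, we have $d_X(z,w)/r_1\ge d_X(z,w)/r_2$. Therefore $\kappa(d_X(z,w)/r_1)\le\kappa(d_X(z,w)/r_2)$ pointwise, which gives $m_{\nu,r_1}(z)\le m_{\nu,r_2}(z)$ for every $\nu$, and we take the supremum.

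For (i) we show that $\mathcal S$ is closed and bounded in $X$. Since $X=\mathbb T^d\times\mathbb R^d$ is proper, that suffices.

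\emph{Closedness.} For each fixed $\nu$, the map $z\mapsto m_{\nu,r}(z)$ is Lipschitz with constant $\mathrm{Lip}(\kappa)/r$. This follows because $|\kappa(d_X(z,w)/r)-\kappa(d_X(z',w)/r)|\le \mathrm{Lip}(\kappa)\,d_X(z,z')/r$ and $\nu$ is a probability measure. The bound is uniform in $\nu$, so $Q^{\varepsilon}_{h,\tau,r}$, a supremum of equi-Lipschitz functions bounded by $1$, is Lipschitz and in particular continuous. Thus $\mathcal S=\{Q\ge\delta\}$ is closed. Note that the supremum is taken over a possibly infinite family, since the measures may charge infinitely many winding labels. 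Equicontinuity handles this without any attainment argument.

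\emph{Boundedness.} This is the step needing care. We bound the velocities charged with non-negligible mass by a uniform superlinearity estimate. For $\mu\in\mathcal H^{\varepsilon}_{h,\tau}$, write $\widetilde\mu=\mathcal R_{h,\tau}\mu$. Then $\int L\,d\widetilde\mu\le\bar L(h,\tau)+\varepsilon\le\max_x L(x,0)+\varepsilon$, by Proposition~\ref{prop:existence-fully-discrete}. With $L(x,v)\ge|v|-C(1)$ and $L\ge\inf L=:-c_0$, a Chebyshev-type argument gives
\[
(R-C(1)-c_0)\,\widetilde\mu(\{|v|>R\})\le \max_x L(x,0)+\varepsilon+c_0 ,
\]
where we integrate $L+c_0\ge0$ and use $L+c_0\ge R-C(1)$ on $\{|v|>R\}$. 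Hence $\sup_{\widetilde\mu}\widetilde\mu(\{|v|>R\})<\delta$ for some $R=R(\varepsilon,\delta)$.

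Now take $z=(x,v)$ with $|v|>R+r$, using a product metric for which $d_X(z,w)\ge|v-v_w|$. By \eqref{eq:near-min-regularized-local-mass-bounds}, $m_{\widetilde\mu,r}(z)\le\widetilde\mu(B_r(z))\le\widetilde\mu(\{|v|>R\})<\delta$ for every admissible $\widetilde\mu$. It follows that $Q^{\varepsilon}_{h,\tau,r}(z)\le\sup_{\widetilde\mu}\widetilde\mu(\{|v|>R\})<\delta$, so $\mathcal S\subset\mathbb T^d\times\overline B_{R+r}$. This bounded closed set is compact.

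If the intended product metric only satisfies $d_X\ge c|v-v_w|$ for some $c>0$, the radius simply becomes $R+r/c$.
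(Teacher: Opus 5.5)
Your proof is correct, but you handle (i) differently from the paper. The paper first shows that $\widetilde{\mathcal H}^{\varepsilon}_{h,\tau}$ is narrowly compact. Tightness comes from the action bound, and narrow closedness holds because holonomy passes to the limit and the action is lower semicontinuous. It then uses joint continuity of $(\nu,z)\mapsto m_{\nu,r}(z)$ to conclude that the supremum defining $Q^{\varepsilon}_{h,\tau,r}$ is attained and that $Q^{\varepsilon}_{h,\tau,r}$ is continuous. The velocity bound is then read off pointwise from a maximizing $\nu$ with $\nu(B_r(z))\ge\delta$, via $\bar L(h,\tau)+\varepsilon\ge\delta\inf_{|\xi|\ge|v|-r}L+(1-\delta)\inf_X L$.

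You instead obtain continuity from the uniform Lipschitz bound $\operatorname{Lip}(\kappa)/r$, which is precisely the estimate the paper proves later as Lemma~\ref{lem:implementation-Q-lipschitz}. You obtain boundedness from a tail bound that is uniform over the whole admissible family. So you need neither Prokhorov, nor narrow closedness of the holonomic family, nor attainment of the supremum. Your argument is shorter and more elementary, and it gives an explicit velocity radius that is uniform in $h,\tau$, since $\bar L(h,\tau)\le\max_x L(x,0)$. The paper's compactness argument additionally yields attainment of the supremum in $Q^{\varepsilon}_{h,\tau,r}(z)$. That is the kind of information one wants for the equivalent description of $\mathcal S^{\varepsilon,r,\delta}_{h,\tau}$ through $\Gamma^{r,\delta}_{h,\tau}$ and for the optimization interpretation in Section~\ref{sec:implementation}.

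One harmless slip: on $\{|v|>R\}$ one only has $L+c_0\ge R-C(1)+c_0$. So your claim $L+c_0\ge R-C(1)$, and the coefficient $R-C(1)-c_0$ in your Chebyshev inequality, can fail when $\inf L>0$. Use $R-C(1)+c_0$, or replace $c_0$ by $\max\{0,-\inf L\}$, and the argument goes through unchanged.
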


	\begin{proof}
		The action bound defining $\mathcal H_{h,\tau}^{\varepsilon}$ and the
		superlinearity of $L$ imply that
		$\widetilde{\mathcal H}_{h,\tau}^{\varepsilon}$ is tight. It is also narrowly
		closed, since the holonomy condition passes to the limit and the action is
		lower semicontinuous. Hence it is narrowly compact.

		The map $(\nu,z)\mapsto m_{\nu,r}(z)$ is continuous. Therefore, the supremum
		defining $Q_{h,\tau,r}^{\varepsilon}$ is attained and
		$Q_{h,\tau,r}^{\varepsilon}$ is continuous. It follows that
		$\mathcal S_{h,\tau}^{\varepsilon,r,\delta}$ is closed.

		To prove boundedness in the velocity variable, let
		$z=(x,v)\in\mathcal S_{h,\tau}^{\varepsilon,r,\delta}$. There exists
		$\nu\in\widetilde{\mathcal H}_{h,\tau}^{\varepsilon}$ such that
		$m_{\nu,r}(z)\ge\delta$, and hence $\nu(B_r(z))\ge\delta$. Setting
		$L_{\min}:=\inf_X L$, we obtain
		\[
		\bar L(h,\tau)+\varepsilon
		\ge
		\delta
		\inf_{\substack{y\in\mathbb T^d\\|\xi|\ge|v|-r}}
		L(y,\xi)
		+
		(1-\delta)L_{\min}.
		\]
		By superlinearity, this gives a bound on $|v|$. Since $\mathbb T^d$ is
		compact, $\mathcal S_{h,\tau}^{\varepsilon,r,\delta}$ is compact.

		Property (ii) follows from
		$\mathcal H_{h,\tau}^{\varepsilon_1}
		\subset\mathcal H_{h,\tau}^{\varepsilon_2}$.
		Property (iii) follows directly from the definition. Finally, if
		$r_1\le r_2$, the monotonicity of $\kappa$ gives
		$m_{\nu,r_1}\le m_{\nu,r_2}$ and proves (iv).
	\end{proof}

	We first establish the outer consistency of the approximation. For a set \(A\subset X\) and \(r>0\), we denote by \(A_r:=\{z\in X:d_X(z,A)<r\}\) its open \(r\)-neighborhood, where \(d_X(z,A):=\inf_{w\in A}d_X(z,w)\).

	\begin{proposition}
		\label{prop:near-minimizing-outer-consistency}
		Assume \eqref{eq:conv_parameter} and let $\varepsilon_n\to0$. For every fixed
		$r>0$ and $\delta>0$,
		\[
		\limsup_{n\to\infty}
		\mathcal S_{h_n,\tau_n}^{\varepsilon_n,r,\delta}
		\subset
		\bigl(\widetilde{\mathcal M}_L\bigr)_r.
		\]
	\end{proposition}

	\begin{proof}
		Let
		$z_n\in\mathcal S_{h_n,\tau_n}^{\varepsilon_n,r,\delta}$ and assume that
		$z_n\to z$. Choose
		$\mu_n\in\mathcal H_{h_n,\tau_n}^{\varepsilon_n}$ such that, setting
		$\widetilde\mu_n:=\mathcal R_{h_n,\tau_n}\mu_n$,
		$m_{\widetilde\mu_n,r}(z_n)\ge\delta-\frac1n.$

		The action bound
		$\int_XL\,d\widetilde\mu_n \le \bar L(h_n,\tau_n)+\varepsilon_n$
		and the superlinearity of $L$ imply tightness. Up to a subsequence,
		$\widetilde\mu_n\rightharpoonup\mu$. As in Proposition
		\ref{prop:fd-mather-measures-to-continuous}, the limit $\mu$ is closed.
		Moreover,
		\[
		\int_XL\,d\mu
		\le
		\liminf_{n\to\infty}
		\int_XL\,d\widetilde\mu_n
		\le
		-\alpha(H),
		\]
		so $\mu$ is a continuous Mather measure.

		Since \(z_n\to z\) and \(\kappa\) is Lipschitz, the functions
		\(w\mapsto\kappa(d_X(z_n,w)/r)\) converge uniformly to
		\(w\mapsto\kappa(d_X(z,w)/r)\). Hence
		\(m_{\mu,r}(z)\ge\delta>0\). Since \(\kappa\) vanishes on
		\([1,+\infty)\), there exists \(w\in\supp(\mu)\) such
		that \(d_X(z,w)<r\). As
		\(\supp(\mu)\subset\widetilde{\mathcal M}_L\), it follows
		that \(d_X(z,\widetilde{\mathcal M}_L)<r\), and therefore
		\(z\in(\widetilde{\mathcal M}_L)_r\).
	\end{proof}

	For the complementary inclusion, we construct fully--discrete holonomic measures converging to a prescribed Mather measure with vanishing action gap.

	\begin{lemma}
		\label{lem:fd-recovery-continuous-Mather}
		Assume \eqref{eq:conv_parameter}, and let $\mu$ be a continuous Mather
		measure. Then there exist $\nu_n\in\mathcal H_{h_n,\tau_n}$ such that
		\[
		\mathcal R_{h_n,\tau_n}\nu_n\rightharpoonup\mu,
		\qquad
		\mathcal A_{h_n,\tau_n}(\nu_n)\longrightarrow-\alpha(H).
		\]
		Consequently, the action gaps
		\[
		\eta_n
		:=
		\mathcal A_{h_n,\tau_n}(\nu_n)-\bar L(h_n,\tau_n)
		\]
		satisfy $\eta_n\to0$. More precisely, there exists $C>0$, depending only
		on $L$ and on $\supp(\mu)$, such that 
		\begin{equation}\label{eq:recovery-gap-rate}
			0\le\eta_n\le C\left(\tau_n+\frac{h_n}{\tau_n}\right).
		\end{equation}
	\end{lemma}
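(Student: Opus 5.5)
The plan is to transport $\mu$ onto the grid through the time-$\tau_n$ Euler--Lagrange flow and a cell quantization, so that holonomy of the discrete measure comes directly from invariance of $\mu$. Since $\mu$ is a Mather measure, it is invariant under $\Phi^t_L$ and its support $K:=\supp(\mu)\subset\widetilde{\mathcal M}_L$ is compact. Hence there is $R>0$ with $|v|\le R$ on $K$. By invariance of $K$ and the Euler--Lagrange equation, every orbit starting in $K$ also satisfies $|\ddot\gamma|\le C$, with $C$ depending only on $L$ and $K$.

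For $h=h_n$, fix a measurable quantization $P_h:\mathbb T^d\to\mathcal G_h$ that sends each point to a grid point of its half-open cell, so $d_{\mathbb T^d}(x,P_h x)\le C_dh$. Fix also a measurable lift $\widetilde P_h$ on $[0,1)^d$ with $|\widetilde P_h\widetilde x-\widetilde x|\le C_dh$ for $\widetilde x\in[0,1)^d$. For $(x,v)\in K$, let $\widetilde\gamma$ be the lifted orbit with $\widetilde\gamma(0)=\widetilde x\in[0,1)^d$. Write $\widetilde\gamma(\tau)=y+k$ with $y\in[0,1)^d$ and $k\in\mathbb Z^d$. Define
\[
\Psi_n(x,v):=\bigl(P_hx,\;P_hy,\;\ell\bigr),
\qquad
\ell:=k+\bigl(\widetilde P_h y - P_h y\bigr),
\]
where $P_hy$ is identified with its representative in $[0,1)^d$, so that $\ell$ is an integer vector. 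Set $(\nu_n)_{ij}^{\ell}:=\mu\bigl(\Psi_n^{-1}(x_i,x_j,\ell)\bigr)$. Only finitely many labels occur, since the displacement is bounded by $R\tau+2C_dh$.

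Holonomy is then immediate. The outgoing mass at $x_i$ is $\mu(P_hx=x_i)$. The incoming mass is $(\Phi^\tau_L)_\#\mu(P_hx=x_i)=\mu(P_hx=x_i)$ by invariance. So $\nu_n\in\mathcal H_{h_n,\tau_n}$.

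For the action estimate, the discrete velocity satisfies $v_{ij}^\ell=(\widetilde P_hy+k-\widetilde P_h\widetilde x)/\tau$. This differs from $\bar v:=(\widetilde\gamma(\tau)-\widetilde\gamma(0))/\tau$ by at most $2C_dh/\tau$. Exactly as in the proof of Theorem~\ref{thm:continuous-fully-discrete-critical-error}, the acceleration bound gives $|\bar v-v|\le C\tau$. Hence every transition produced from $(x,v)\in K$ satisfies
\[
d_X\bigl((x_i,v_{ij}^\ell),(x,v)\bigr)\le C\Bigl(h+\tau+\frac h\tau\Bigr),
\]
and all these velocities lie in a fixed ball, because $h/\tau\le\eta_0$. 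Using the $C^1$ bound on $L$ on this compact set and integrating against $\mu$,
\[
\mathcal A_{h_n,\tau_n}(\nu_n)\le\int L\,d\mu+C\Bigl(\tau_n+\frac{h_n}{\tau_n}\Bigr)=-\alpha(H)+C\Bigl(\tau_n+\frac{h_n}{\tau_n}\Bigr),
\]
with the matching lower bound. The same pointwise estimate gives a coupling between $\mathcal R_{h_n,\tau_n}\nu_n$ and $\mu$ with displacement $o(1)$, so narrow convergence follows by testing against bounded Lipschitz functions.

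Finally, $\eta_n\ge0$ follows from Proposition~\ref{prop:fd-critical-duality}. For the upper bound,
\[
\eta_n\le -\alpha(H)-\bar L(h_n,\tau_n)+C\Bigl(\tau_n+\frac{h_n}{\tau_n}\Bigr),
\]
and Theorem~\ref{thm:continuous-fully-discrete-critical-error} bounds $|\bar L+\alpha(H)|$ by the same rate, which yields \eqref{eq:recovery-gap-rate}. This rate statement needs $h_n/\tau_n\le\eta_0$ and $\tau_n\le\tau_0$, which hold for large $n$ under \eqref{eq:conv_parameter}. I expect the main obstacle to be the bookkeeping of lifts and winding labels: $\Psi_n$ must be measurable, and the label must encode the true lifted displacement up to $O(h)$ uniformly, including points near cell and fundamental-domain boundaries. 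The half-open cell convention is what I would rely on to handle this.
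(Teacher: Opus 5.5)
Your proposal is correct and follows essentially the same route as the paper. Both push $\mu$ forward by the time-$\tau_n$ Euler--Lagrange flow composed with a grid quantization whose winding label records the lifted displacement, obtain holonomy from $\Phi_L^{\tau_n}$-invariance, bound the velocity error by $O(\tau_n+h_n/\tau_n)$ via the acceleration bound, and get the rate \eqref{eq:recovery-gap-rate} from Theorem~\ref{thm:continuous-fully-discrete-critical-error}. The differences are cosmetic: your half-open-cell quantization makes the label reduce to $\ell=k$, where the paper uses a nearest-grid-point projection with the correction $m_n^+-m_n^-$, and you spell out the coupling argument for narrow convergence.
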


\begin{proof}
	Let \(\Phi_L^t\) be the Euler--Lagrange flow. The Mather measure
	\(\mu\) is invariant under \(\Phi_L^t\) and has compact support; see
	\cite{Fathi2008,Sorrentino2015}. For
	\((x,v)\in\supp(\mu)\), write
	\[
	\Phi_L^{\tau_n}(x,v)=(y_n,w_n)
	\]
	and let \(\gamma_{x,v}:[0,\tau_n]\to\mathbb T^d\) be the corresponding
	Euler--Lagrange trajectory. Choose a lift
	\(\widetilde\gamma_{x,v}:[0,\tau_n]\to\mathbb R^d\) and set
	\[
	\widetilde x:=\widetilde\gamma_{x,v}(0),
	\qquad
	\widetilde y_n:=\widetilde\gamma_{x,v}(\tau_n),
	\qquad
	\Delta_n(x,v):=\widetilde y_n-\widetilde x.
	\]
	
	Let \(q_n:\mathbb T^d\to\mathcal G_{h_n}\) be a measurable
	nearest-grid-point projection, with ties resolved lexicographically.
	Choose lifts \(\widetilde q_n(x)\) and
	\(\widetilde q_n(y_n)\) satisfying
	\[
	|\widetilde q_n(x)-\widetilde x|
	+
	|\widetilde q_n(y_n)-\widetilde y_n|
	\le Ch_n.
	\]
	Write
	\[
	\widetilde q_n(x)=q_n(x)+m_n^-(x,v),
	\qquad
	\widetilde q_n(y_n)=q_n(y_n)+m_n^+(x,v),
	\]
	where \(m_n^\pm(x,v)\in\mathbb Z^d\), and define
	\[
	\ell_n(x,v):=m_n^+(x,v)-m_n^-(x,v).
	\]
	Then
	\[
	q_n(y_n)+\ell_n(x,v)-q_n(x)
	=
	\widetilde q_n(y_n)-\widetilde q_n(x)
	=
	\Delta_n(x,v)+e_n(x,v),
	\]
	where \(|e_n(x,v)|\le Ch_n\), uniformly on \(\supp(\mu)\).
	The construction is illustrated in
	Figure~\ref{fig:recovery-winding-construction}.

\begin{figure}[ht]
	\centering
	\begin{tikzpicture}[
		x=1.02cm,
		y=1cm,
		>=Latex,
		gridpoint/.style={
			circle,
			fill=black,
			inner sep=1.25pt
		},
		continuouspoint/.style={
			circle,
			fill=blue!70!black,
			inner sep=2pt
		},
		projectedpoint/.style={
			circle,
			fill=red!75!black,
			inner sep=2pt
		},
		trajectory/.style={
			blue!70!black,
			very thick,
			->
		},
		projection/.style={
			gray!70!black,
			densely dashed,
			thick,
			->
		},
		continuous displacement/.style={
			blue!70!black,
			thick,
			<->
		},
		discrete displacement/.style={
			red!75!black,
			thick,
			<->
		},
		cell boundary/.style={
			gray!55,
			dashed
		},
		text box/.style={
			fill=white,
			fill opacity=0.96,
			text opacity=1,
			inner sep=1.5pt
		},
		every node/.style={
			font=\small
		}
		]
		
		\fill[blue!4]
		(0,-1.30) rectangle (5,3.45);
		\fill[orange!5]
		(5,-1.30) rectangle (10,3.45);
		
		\draw[cell boundary]
		(0,-1.30) -- (0,3.45);
		\draw[cell boundary]
		(5,-1.30) -- (5,3.45);
		\draw[cell boundary]
		(10,-1.30) -- (10,3.45);
		
		\node[
		gray!70!black,
		font=\small
		] at (2.5,3.20)
		{fundamental cell};
		
		\node[
		gray!70!black,
		font=\small
		] at (7.5,3.20)
		{translated cell};
		
		\coordinate (xcont) at (2.35,1.35);
		\coordinate (ycont) at (7.65,1.35);
		\coordinate (xgrid) at (2,0);
		\coordinate (ygrid) at (8,0);
		
		\draw[continuous displacement]
		(2.35,2.62)
		--
		(7.65,2.62)
		node[
		midway,
		above=4pt,
		text box
		]
		{\(\Delta_n(x,v)\)};
		
		\draw[trajectory]
		(xcont)
		to[out=32,in=148]
		node[
		pos=0.50,
		above=1pt,
		blue!70!black,
		text box
		]
		{\(\widetilde\gamma_{x,v}\)}
		(ycont);
		
		\node[continuouspoint] at (xcont) {};
		\node[continuouspoint] at (ycont) {};
		
		\node[
		blue!70!black,
		anchor=north east,
		align=right
		] at ($(xcont)+(-0.10,-0.10)$)
		{\(\widetilde x\)\\[-1pt]
			\(\scriptstyle t=0\)};
		
		\node[
		blue!70!black,
		anchor=north west,
		align=left
		] at ($(ycont)+(0.10,-0.10)$)
		{\(\widetilde y_n\)\\[-1pt]
			\(\scriptstyle t=\tau_n\)};
		
		\draw[gray!65,thick]
		(-0.25,0) -- (10.25,0);
		
		\foreach \xi in {0,1,...,10}{
			\node[gridpoint] at (\xi,0) {};
		}
		
		\node[
		below=5pt,
		font=\scriptsize
		] at (0,0) {\(0\)};
		
		\node[
		below=5pt,
		font=\scriptsize
		] at (5,0) {\(1\)};
		
		\node[
		below=5pt,
		font=\scriptsize
		] at (10,0) {\(2\)};
		
		\draw[projection]
		(xcont) -- (xgrid);
		
		\draw[projection]
		(ycont) -- (ygrid);
		
		\node[
		gray!70!black,
		anchor=east,
		text box
		] at (1.68,0.68)
		{\(O(h_n)\)};
		
		\node[
		gray!70!black,
		anchor=west,
		text box
		] at (8.32,0.68)
		{\(O(h_n)\)};
		
		\node[projectedpoint] at (xgrid) {};
		\node[projectedpoint] at (ygrid) {};
		
		\node[
		red!75!black,
		anchor=north east
		] at ($(xgrid)+(-0.08,-0.12)$)
		{\(\widetilde q_n(x)\)};
		
		\node[
		red!75!black,
		anchor=north west
		] at ($(ygrid)+(0.08,-0.12)$)
		{\(\widetilde q_n(y_n)\)};
		
		\draw[
		black!75,
		thick,
		->
		]
		(4.58,0.55)
		to[
		out=65,
		in=115
		]
		node[
		midway,
		above=5pt,
		text box
		]
		{\(\ell_n(x,v)\)}
		(5.42,0.55);
		
		\draw[discrete displacement]
		(2,-0.78)
		--
		(8,-0.78)
		node[
		midway,
		below=5pt,
		text box
		]
		{\(\widetilde q_n(y_n)-\widetilde q_n(x)\)};
		
	\end{tikzpicture}
	
	\caption{
		Construction of the winding-labeled transition in the universal
		cover. The continuous and projected displacements differ by
		\(O(h_n)\), while \(\ell_n(x,v)\) records the change of
		fundamental cell.
	}
	\label{fig:recovery-winding-construction}
\end{figure}

	
	Define \(\nu_n\) as the pushforward of \(\mu\) by
	\[
	(x,v)
	\longmapsto
	\bigl(q_n(x),q_n(y_n),\ell_n(x,v)\bigr).
	\]
	The maps involved are measurable by the choice of the projection and
	the lexicographic tie-breaking rule.
	
	For every \(\varphi:\mathcal G_{h_n}\to\mathbb R\), the invariance of
	\(\mu\) under \(\Phi_L^{\tau_n}\) gives
	\[
	\int_X
	\bigl(\varphi(q_n(y_n))-\varphi(q_n(x))\bigr)
	\,d\mu(x,v)
	=0.
	\]
	Hence \(\nu_n\in\mathcal H_{h_n,\tau_n}\). The velocity associated with the discrete transition is
	\[
	v_n(x,v)
	:=
	\frac{q_n(y_n)+\ell_n(x,v)-q_n(x)}{\tau_n}
	=
	\frac{\widetilde q_n(y_n)-\widetilde q_n(x)}{\tau_n}.
	\]
	By construction,
	\[
	v_n(x,v)
	=
	\frac{\Delta_n(x,v)}{\tau_n}
	+
	O\left(\frac{h_n}{\tau_n}\right).
	\]
	The smoothness of the Euler--Lagrange flow and the compactness of
	\(\supp(\mu)\) give
	\[
	\frac{\Delta_n(x,v)}{\tau_n}
	=
	v+O(\tau_n)
	\]
	uniformly on \(\supp(\mu)\). Therefore
	\[
	v_n(x,v)
	=
	v+O(\tau_n)
	+
	O\left(\frac{h_n}{\tau_n}\right)
	\]
	uniformly on \(\supp(\mu)\). It follows that \(q_n(x)\to x\) and \(v_n(x,v)\to v\) uniformly on
	\(\supp(\mu)\). Consequently,
	\[
	\mathcal R_{h_n,\tau_n}\nu_n
	\rightharpoonup\mu
	\]
	narrowly, and
	\[
	\mathcal A_{h_n,\tau_n}(\nu_n)
	=
	\int_X L(q_n(x),v_n(x,v))\,d\mu(x,v)
	\longrightarrow
	\int_XL\,d\mu
	=
	-\alpha(H).
	\]
	Finally, Proposition \ref{prop:fd-critical-duality} and Theorem
	\ref{thm:continuous-fully-discrete-critical-error} imply
	\(0\le\eta_n\to0\).	The construction also gives a quantitative estimate. Indeed, since
	\[
	d_{\mathbb T^d}(q_n(x),x)\le Ch_n
	\]
	and
	\[
	|v_n(x,v)-v|
	\le
	C\left(\tau_n+\frac{h_n}{\tau_n}\right)
	\]
	uniformly on \(\supp(\mu)\), the Lipschitz continuity of \(L\) on a
	suitable compact velocity set yields
	\[
	\begin{aligned}
		\left|
		\mathcal A_{h_n,\tau_n}(\nu_n)+\alpha(H)
		\right|
		&=
		\left|
		\int_X
		\bigl(
		L(q_n(x),v_n(x,v))-L(x,v)
		\bigr)
		\,d\mu
		\right| \\
		&\le
		C\left(
		\tau_n+\frac{h_n}{\tau_n}
		\right).
	\end{aligned}
	\]
	Combining this estimate with Theorem
	\ref{thm:continuous-fully-discrete-critical-error} gives
	\eqref{eq:recovery-gap-rate}.
\end{proof}
We now pass from the recovery of individual measures to that of the whole Mather set. Set 
\[ 
Y:=\bigcup_{\mu\ \mathrm{Mather}}\supp(\mu), \qquad \widetilde{\mathcal M}_L=\overline Y.
 \]
 Since \(X\) is a separable metric space, it has a countable basis. For each basis element intersecting \(Y\), choose a Mather measure whose support intersects it. This gives a sequence \((\mu_k)_{k\ge1}\) such that 
 \[ 
 \widetilde{\mathcal M}_L = \overline{\bigcup_{k\ge1}\supp(\mu_k)}. 
 \]
 Let \(a_k>0\) with \(\sum_{k\ge1}a_k=1\), and set 
 \[ \mu_*:=\sum_{k\ge1}a_k\mu_k. \] 
 By linearity of the closedness condition and of the action, \(\mu_*\) is a Mather measure. Moreover, since every \(a_k\) is positive,
  \[ \supp(\mu_*) = \overline{\bigcup_{k\ge1}\supp(\mu_k)} = \widetilde{\mathcal M}_L. \] 
  We call \(\mu_*\) a full-support Mather measure.

	\begin{proposition}
		\label{prop:near-minimizing-recovery}
		Assume \eqref{eq:conv_parameter}, and let $\mu_*$ be a full-support Mather
		measure. Let $(\nu_n)_n$ be the recovery sequence given by Lemma
		\ref{lem:fd-recovery-continuous-Mather}, and set
		\[
		\eta_n
		:=
		\mathcal A_{h_n,\tau_n}(\nu_n)-\bar L(h_n,\tau_n).
		\]
		If $\varepsilon_n\to0$ and $\eta_n\le\varepsilon_n$ for all sufficiently
		large $n$, then, for every fixed $r>0$, there exists $\delta_r>0$ such that
		\[
		\widetilde{\mathcal M}_L
		\subset
		\mathcal S_{h_n,\tau_n}^{\varepsilon_n,r,\delta_r}
		\]
		for all sufficiently large $n$.
	\end{proposition}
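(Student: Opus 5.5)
The plan is to use the recovery sequence $\nu_n$ itself as the witness in the definition of $Q^{\varepsilon_n}_{h_n,\tau_n,r}$. Since $\eta_n\le\varepsilon_n$ for large $n$, we have $\nu_n\in\mathcal H^{\varepsilon_n}_{h_n,\tau_n}$. Hence
\[
Q^{\varepsilon_n}_{h_n,\tau_n,r}(z)\ \ge\ m_{\widetilde\nu_n,r}(z),\qquad \widetilde\nu_n:=\mathcal R_{h_n,\tau_n}\nu_n,
\]
for every $z\in X$. It therefore suffices to show that $m_{\widetilde\nu_n,r}\ge\delta_r$ on $\widetilde{\mathcal M}_L$ for all large $n$, with $\delta_r$ independent of $n$ and $z$.

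First, I will produce a uniform lower bound for the limit measure. Set
\[
f(z):=m_{\mu_*,r/2}(z)=\int_X\kappa\bigl(2d_X(z,w)/r\bigr)\,d\mu_*(w).
\]
Because $\kappa$ is Lipschitz, $f$ is continuous. For $z\in\widetilde{\mathcal M}_L=\supp(\mu_*)$, \eqref{eq:near-min-regularized-local-mass-bounds} gives $f(z)\ge\mu_*(B_{r/4}(z))>0$. Since $\widetilde{\mathcal M}_L$ is compact, $c_r:=\min_{\widetilde{\mathcal M}_L}f>0$. I will set $\delta_r:=\min\{c_r/2,1\}$.

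Second, I will pass from $\mu_*$ to $\widetilde\nu_n$. For this it is cleanest to use the structure of the construction in Lemma \ref{lem:fd-recovery-continuous-Mather} rather than narrow convergence alone. That construction shows $\widetilde\nu_n=T_n{}_\#\mu_*$ with $T_n(x,v)=(q_n(x),v_n(x,v))$, where $\sup_{\supp\mu_*}d_X(T_n(w),w)=:\rho_n\to0$; this holds because $d_{\mathbb T^d}(q_n(x),x)\le Ch_n$ and $|v_n-v|\le C(\tau_n+h_n/\tau_n)$ uniformly on $\supp(\mu_*)$. Note that $\supp(\mu_*)=\widetilde{\mathcal M}_L$ is compact, so the uniform estimates of the lemma apply. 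Then, for any $z\in\widetilde{\mathcal M}_L$,
\[
m_{\widetilde\nu_n,r}(z)=\int\kappa\bigl(d_X(z,T_n w)/r\bigr)\,d\mu_*(w)\ \ge\ \mu_*\bigl(\{w:\ d_X(z,w)+\rho_n\le r/2\}\bigr),
\]
using that $\kappa=1$ on $[0,1/2]$. Once $\rho_n\le r/4$, the right-hand side is at least $\mu_*(\overline B_{r/4}(z))$. This quantity in turn dominates $\int\kappa(2d_X(z,w)/r)\,d\mu_*$ evaluated at half that scale, so I need to align the radii. The safer choice is to define $f$ at radius $r/4$, i.e. $c_r:=\min_{\widetilde{\mathcal M}_L}\mu_*$-mass via $m_{\mu_*,r/4}$, which is $\le\mu_*(B_{r/4}(z))$. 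This gives $m_{\widetilde\nu_n,r}(z)\ge c_r\ge\delta_r$ uniformly in $z$ for large $n$, and hence the inclusion.

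The main obstacle is the uniformity in $z$ over the whole compact set $\widetilde{\mathcal M}_L$. Pointwise narrow convergence gives only $m_{\widetilde\nu_n,r}(z)\to m_{\mu_*,r}(z)$ for each fixed $z$. I handle this with the explicit transport estimate above, which is uniform. An alternative would be equicontinuity of $z\mapsto m_{\nu,r}(z)$ (Lipschitz constant $\mathrm{Lip}(\kappa)/r$, independent of $\nu$) combined with a finite net of $\widetilde{\mathcal M}_L$; this also works and uses only narrow convergence. The only genuine input is the positivity of the minimum $c_r$, which relies on full support of $\mu_*$ together with continuity and compactness.
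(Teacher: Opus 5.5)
Your proof is correct. It differs from the paper's mainly in how you transfer the lower mass bound from $\mu_*$ to $\widetilde\nu_n$.

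The paper covers $\widetilde{\mathcal M}_L$ by finitely many balls $B_{r/4}(z_k)$ and picks $0<\delta_r<\min_k\mu_*(B_{r/4}(z_k))$. It then uses only the narrow convergence $\widetilde\nu_n\rightharpoonup\mu_*$, via the Portmanteau theorem on these finitely many open sets, together with the inclusion $B_{r/4}(z_k)\subset B_{r/2}(z)$. This is essentially the finite-net alternative you mention at the end.

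Your main argument obtains uniformity in $z$ from the continuity of $z\mapsto m_{\mu_*,r/4}(z)$ on the compact set $\widetilde{\mathcal M}_L$. You then transfer it through the explicit pushforward structure $\widetilde\nu_n=(T_n)_\#\mu_*$, where $\sup_{\supp\mu_*}d_X(T_nw,w)=\rho_n\le C(h_n+\tau_n+h_n/\tau_n)$. This is legitimate, since the statement refers to the specific sequence built in Lemma~\ref{lem:fd-recovery-continuous-Mather}.

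The paper's route is softer and more general. It applies verbatim to any sequence in $\mathcal H^{\varepsilon_n}_{h_n,\tau_n}$ whose reconstructions converge narrowly to $\mu_*$, but its threshold on $n$ is not explicit.

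Your route uses more of the construction and is quantitative. The inclusion holds as soon as $\eta_n\le\varepsilon_n$ and $\rho_n\le r/4$. More generally, you get $m_{\widetilde\nu_n,r}(z)\ge\mu_*(\overline B_{r/2-\rho_n}(z))$. This also works with $r=r_n\to0$: it gives $\widetilde{\mathcal M}_L\subset\mathcal S_{h_n,\tau_n}^{\varepsilon_n,r_n,\delta_n}$ with $\delta_n:=\min_{\widetilde{\mathcal M}_L}m_{\mu_*,r_n/4}>0$, provided $\rho_n\le r_n/4$. That is exactly the recovery half of the diagonal choice discussed in Remark~\ref{rem:obstruction-vanishing-radius}, and it is consistent with the scaling $r\ge c\,h/\tau$ of Section~\ref{sec:implementation}.

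In a write-up, drop the abandoned first choice $f=m_{\mu_*,r/2}$. State directly the chain $m_{\widetilde\nu_n,r}(z)\ge\mu_*(\overline B_{r/4}(z))\ge m_{\mu_*,r/4}(z)\ge c_r$. Note that $c_r>0$ because $m_{\mu_*,r/4}(z)\ge\mu_*(B_{r/8}(z))>0$ on $\supp(\mu_*)$.
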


	\begin{proof}
		Let $\widetilde\nu_n:=\mathcal R_{h_n,\tau_n}\nu_n$. Then
		$\widetilde\nu_n\rightharpoonup\mu_*$ and $\eta_n\to0$. Fix $r>0$ and choose
		$z_1,\ldots,z_N\in\widetilde{\mathcal M}_L$ such that
		\[
		\widetilde{\mathcal M}_L
		\subset
		\bigcup_{k=1}^N B_{r/4}(z_k).
		\]
		Since $\supp(\mu_*)=\widetilde{\mathcal M}_L$, one may choose
		$0<\delta_r< \min_{1\le k\le N} \mu_*(B_{r/4}(z_k)).$
		By narrow convergence, for all sufficiently large $n$,
		$\widetilde\nu_n(B_{r/4}(z_k))>\delta_r, \qquad k=1,\ldots,N.$

		Given $z\in\widetilde{\mathcal M}_L$, choose $k$ such that
		$z\in B_{r/4}(z_k)$. Since
		$B_{r/4}(z_k)\subset B_{r/2}(z)$,
		$m_{\widetilde\nu_n,r}(z) \ge \widetilde\nu_n(B_{r/2}(z)) > \delta_r.$
		Moreover, $\eta_n\le\varepsilon_n$ implies
		$\nu_n\in\mathcal H_{h_n,\tau_n}^{\varepsilon_n}$. Hence
		$Q_{h_n,\tau_n,r}^{\varepsilon_n}(z)>\delta_r,$
		which proves the claim.
	\end{proof}

	\begin{remark}
		\label{rem:choice-near-minimizing-parameters}
		By \eqref{eq:recovery-gap-rate}, one may choose
		$\varepsilon_n=C_0(\tau_n+h_n/\tau_n)$, with $C_0$ sufficiently large.
		This tolerance dominates the recovery gap and vanishes under
		\eqref{eq:conv_parameter}; in the balanced regime $h_n\asymp\tau_n^2$,
		it is $O(\tau_n)$. The radius $r$ fixes the phase-space resolution, while
		$\delta_r$ is a uniform lower bound on the local mass of the chosen
		full-support Mather measure at that scale.
	\end{remark}

	Combining recovery and outer consistency gives:

	\begin{corollary}
		\label{cor:near-minimizing-fixed-scale-approximation}
		Under the assumptions of Proposition
		\ref{prop:near-minimizing-recovery}, for every fixed $r>0$,
		\[
		\widetilde{\mathcal M}_L
		\subset
		\liminf_{n\to\infty}
		\mathcal S_{h_n,\tau_n}^{\varepsilon_n,r,\delta_r}
		\subset
		\limsup_{n\to\infty}
		\mathcal S_{h_n,\tau_n}^{\varepsilon_n,r,\delta_r}
		\subset
		\bigl(\widetilde{\mathcal M}_L\bigr)_r.
		\]
	\end{corollary}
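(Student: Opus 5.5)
The corollary is a chain of three inclusions, and I plan to prove each one separately. Only the outer two need earlier results; the middle one is a general fact about Kuratowski limits.

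\emph{First inclusion.} I will apply Proposition \ref{prop:near-minimizing-recovery} at the fixed radius $r$. It gives $\delta_r>0$ and an index $n_0$ such that $\widetilde{\mathcal M}_L\subset\mathcal S_{h_n,\tau_n}^{\varepsilon_n,r,\delta_r}$ for every $n\ge n_0$. Now fix $z\in\widetilde{\mathcal M}_L$. I take the constant sequence $z_n:=z$ for $n\ge n_0$. For the finitely many indices $n<n_0$, I choose $z_n$ arbitrarily in $\mathcal S_{h_n,\tau_n}^{\varepsilon_n,r,\delta_r}$, provided that set is nonempty. This sequence satisfies $z_n\in\mathcal S_{h_n,\tau_n}^{\varepsilon_n,r,\delta_r}$ and $z_n\to z$, so $z$ lies in the lower limit.

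The only delicate point is the convention when the set is empty for some $n<n_0$. Membership in a Kuratowski lower limit depends only on the tail of the sequence, so those finitely many indices can be discarded. I will state this explicitly, since it is the only place where the bookkeeping matters.

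\emph{Middle inclusion.} This is the standard fact $\liminf_n A_n\subset\limsup_n A_n$. A sequence $x_n\in A_n$ with $x_n\to x$ is in particular a subsequence converging to $x$, which is exactly the defining condition for the upper limit.

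\emph{Last inclusion.} This is Proposition \ref{prop:near-minimizing-outer-consistency}, applied with $\delta=\delta_r$. That proposition needs \eqref{eq:conv_parameter} and $\varepsilon_n\to0$, and both are among the assumptions of Proposition \ref{prop:near-minimizing-recovery}. The radius $r>0$ is fixed, and $\delta_r>0$ is a fixed constant that depends on $r$ but not on $n$, so the outer-consistency statement applies directly.

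The main obstacle is therefore only the uniformity in the first step. The threshold $\delta_r$ must be independent of $n$, and Proposition \ref{prop:near-minimizing-recovery} supplies exactly that. Apart from this, the proof is a direct combination of the two propositions.
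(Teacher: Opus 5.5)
Your proposal is correct and matches the paper's proof: the first inclusion comes from Proposition~\ref{prop:near-minimizing-recovery}, the middle one is the general fact that a Kuratowski lower limit is contained in the upper limit, and the last comes from Proposition~\ref{prop:near-minimizing-outer-consistency} with $\delta=\delta_r$. Your remark about discarding the finitely many indices $n<n_0$ in the lower limit is a harmless refinement of bookkeeping that the paper leaves implicit.
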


	\begin{proof}
		The first inclusion follows from Proposition
		\ref{prop:near-minimizing-recovery}, and the second is a general property of
		Kuratowski limits. Since $\varepsilon_n\to0$, the final inclusion follows
		from Proposition \ref{prop:near-minimizing-outer-consistency}.
	\end{proof}
%

	\begin{remark}
		\label{rem:obstruction-vanishing-radius}
		Corollary \ref{cor:near-minimizing-fixed-scale-approximation} is a
		fixed-scale statement, and a diagonal choice $r_n\to0$ is not available
		within the present construction. Indeed, the recovery step forces
		$\delta\le\delta_{r}$, and $\delta_r\to0$ as $r\to0$ whenever
		$\widetilde{\mathcal M}_L$ is not purely atomic, since $\delta_r$ is
		bounded above by the local mass of $\mu_*$ at scale $r$. On the other
		hand, the proof of Proposition
		\ref{prop:near-minimizing-outer-consistency} uses $\delta>0$ in the
		limit, and its conclusion degenerates when $\delta_n\to0$.

		The localization estimate \eqref{eq:implementation-localization} below
		suggests how the two requirements may be reconciled: if $r_n\to0$,
		$\delta_n\to0$ and $\varepsilon_n/\delta_n\to0$, then every point of
		$\limsup_n\mathcal S_{h_n,\tau_n}^{\varepsilon_n,r_n,\delta_n}$ is a
		limit of transitions whose Bellman defect tends to zero, and the argument
		of Proposition \ref{prop:fd-mather-limsup-mane} may be adapted to
		almost-calibrated chains, leading naturally to the Ma\~n\'e set. Combining this with
		\eqref{eq:recovery-gap-rate}, such a choice is possible as soon as
		$\eta_n\ll\delta_{r_n}$, for instance under a lower bound of the form
		$\mu_*(B_r(z))\ge c\,r^{s}$ on $\supp(\mu_*)$. We do not pursue this
		here.
	\end{remark}

	\begin{remark}
		\label{rem:near-minimizing-versus-fd-Mather}
		The sets $\mathcal S_{h,\tau}^{\varepsilon,r,\delta}$ are not
		fully--discrete Mather sets. The fully--discrete Mather set remains the
		natural object for the intrinsic discrete theory. The present construction
		serves a different purpose: the tolerance $\varepsilon$ prevents the grid
		from discarding asymptotically minimizing components, while the threshold
		$\delta$ excludes regions carrying negligible local mass. It therefore
		provides a more stable approximation of the full continuous Mather set.
	\end{remark}

\section{Implementation of the Mass-Threshold Approximation}
\label{sec:implementation}

We now turn the construction of
Section~\ref{sec:near-minimizing-mass-threshold} into a family of
finite-dimensional linear programs.

Writing \(x_i=ih\), with \(Nh=1\), every lifted displacement has the
form \(kh\), \(k\in\mathbb Z^d\). It determines
\[
j=(i+k)\bmod N,
\qquad
\ell=\frac{k-(j-i)}{N},
\qquad
v_{i,k}=\frac{kh}{\tau}.
\]
Thus transitions may be indexed by \((i,k)\) and represented in phase
space by
\[
z_{i,k}:=\left(x_i,\frac{kh}{\tau}\right).
\]
For a velocity cutoff \(V>0\), set
\[
\mathcal E_V
:=
\left\{(i,k):\left|\frac{kh}{\tau}\right|\le V\right\}.
\]
The resulting transition graph is finite. The coercivity estimate in
Proposition~\ref{prop:properties-near-minimizing-threshold} bounds the
velocity component of the test region, while
\eqref{eq:bounded-jump} shows that a sufficiently large cutoff
preserves the minimum action. In practice, \(V\) is increased until
the critical value and the detected set stabilize.

On \(\mathcal E_V\), we impose the positivity, normalization, and
balance conditions in \eqref{eq:fd-holonomy-winding}. The objective is
the action \eqref{eq:fd-Mather-LP-winding}, the near-minimality
constraint is given in
Definition~\ref{def:fd-near-minimizing-measures}, and the regularized
local mass is defined by
\eqref{eq:near-min-regularized-local-mass}. Hence
\(Q_{h,\tau,r}^{\varepsilon}(z)\) is the value of a finite-dimensional
linear program.

\begin{remark}
	\label{rem:implementation-Bellman-defect}
	Let \((u_{h,\tau},\bar L(h,\tau))\) solve
	\eqref{eq:fully-discrete-ergodic-equation} and define the reduced cost
	or Bellman defect by
	\[
	\overline c_{ij}^{\ell}
	:=
	L(x_i,v_{ij}^{\ell})-\bar L(h,\tau)
	+
	\frac{u_{h,\tau}(x_i)-u_{h,\tau}(x_j)}{\tau}.
	\]
	The Bellman inequality gives
	\(\overline c_{ij}^{\ell}\ge0\), and holonomy yields
	\begin{equation}
		\label{eq:bellman-gap-identity}
		\mathcal A_{h,\tau}(\mu)-\bar L(h,\tau)
		=
		\sum_{i,j,\ell}
		\overline c_{ij}^{\ell}\mu_{ij}^{\ell}.
	\end{equation}
	Thus the action gap is the average Bellman defect.
	
	If
	\(z\in\mathcal S_{h,\tau}^{\varepsilon,r,\delta}\), an admissible
	measure assigns at least mass \(\delta\) to transitions in \(B_r(z)\)
	and has average defect at most \(\varepsilon\). At least one such
	transition must therefore satisfy
	\(\overline c_{ij}^{\ell}\le\varepsilon/\delta\). Consequently,
	\begin{equation}
		\label{eq:implementation-localization}
		\mathcal S_{h,\tau}^{\varepsilon,r,\delta}
		\subset
		\bigcup_{\overline c_{ij}^{\ell}\le\varepsilon/\delta}
		B_r\bigl((x_i,v_{ij}^{\ell})\bigr).
	\end{equation}
	This necessary condition provides a preliminary screening of the test
	region.
\end{remark}

\begin{lemma}
	\label{lem:implementation-Q-lipschitz}
	For every \(z,z'\in X\),
	\[
	\left|
	Q_{h,\tau,r}^{\varepsilon}(z)
	-
	Q_{h,\tau,r}^{\varepsilon}(z')
	\right|
	\le
	\frac{\operatorname{Lip}(\kappa)}{r}\,d_X(z,z').
	\]
\end{lemma}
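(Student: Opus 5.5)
The plan is to prove the estimate first for a single measure and then pass it through the supremum defining \(Q_{h,\tau,r}^{\varepsilon}\). The first step is a pointwise Lipschitz bound on the kernel. Since \(\kappa\) is Lipschitz and \(w\mapsto d_X(z,w)\) is \(1\)-Lipschitz in \(z\) by the triangle inequality, for every \(w\in X\) we get
\[
\left|\kappa\!\left(\frac{d_X(z,w)}r\right)-\kappa\!\left(\frac{d_X(z',w)}r\right)\right|
\le \frac{\operatorname{Lip}(\kappa)}{r}\,\bigl|d_X(z,w)-d_X(z',w)\bigr|
\le \frac{\operatorname{Lip}(\kappa)}{r}\,d_X(z,z').
\]
Integrating this against any probability measure \(\nu\in\mathcal P(X)\), and using \(\nu(X)=1\), gives
\[
|m_{\nu,r}(z)-m_{\nu,r}(z')|\le \frac{\operatorname{Lip}(\kappa)}{r}\,d_X(z,z').
\]
This holds uniformly in \(\nu\), and in particular for every \(\nu\in\widetilde{\mathcal H}_{h,\tau}^{\varepsilon}\).

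Next I would pass to the supremum via the elementary fact that a supremum of uniformly \(L\)-Lipschitz functions is \(L\)-Lipschitz, provided it is finite. Finiteness is clear here, since \(0\le m_{\nu,r}\le1\). For any \(\nu\in\widetilde{\mathcal H}_{h,\tau}^{\varepsilon}\) we have
\[
m_{\nu,r}(z)\le m_{\nu,r}(z')+\frac{\operatorname{Lip}(\kappa)}{r}d_X(z,z')\le Q_{h,\tau,r}^{\varepsilon}(z')+\frac{\operatorname{Lip}(\kappa)}{r}d_X(z,z').
\]
Taking the supremum over \(\nu\) bounds \(Q_{h,\tau,r}^{\varepsilon}(z)\). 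Exchanging \(z\) and \(z'\) gives the reverse inequality, and together these prove the claim.

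The only point needing care is that \(\widetilde{\mathcal H}_{h,\tau}^{\varepsilon}\) is nonempty, so that \(Q\) is not \(-\infty\). This holds because it contains the reconstructed fully--discrete Mather measures (Remark \ref{rem:fd-mather-existence}). The argument does not use attainment of the supremum, so Proposition \ref{prop:properties-near-minimizing-threshold} is not needed here. I expect no genuine obstacle.
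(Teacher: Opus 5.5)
Your proposal is correct and is essentially the paper's proof: a uniform Lipschitz bound on \(m_{\nu,r}\) obtained by integrating the kernel estimate against a probability measure, followed by passage to the supremum over \(\widetilde{\mathcal H}_{h,\tau}^{\varepsilon}\). Your extra remarks on finiteness and on nonemptiness of \(\widetilde{\mathcal H}_{h,\tau}^{\varepsilon}\) (via the reconstructed fully--discrete Mather measures) make explicit details the paper leaves implicit.
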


\begin{proof}
	For every
	\(\widetilde\mu\in\widetilde{\mathcal H}_{h,\tau}^{\varepsilon}\),
	\[
	\begin{aligned}
		\left|
		m_{\widetilde\mu,r}(z)-m_{\widetilde\mu,r}(z')
		\right|
		&\le
		\int_X
		\left|
		\kappa\left(\frac{d_X(z,w)}{r}\right)
		-
		\kappa\left(\frac{d_X(z',w)}{r}\right)
		\right|
		\,d\widetilde\mu(w) \\
		&\le
		\frac{\operatorname{Lip}(\kappa)}{r}\,d_X(z,z').
	\end{aligned}
	\]
	Taking the supremum over
	\(\widetilde{\mathcal H}_{h,\tau}^{\varepsilon}\) proves the claim.
\end{proof}

The implementation consists of building the truncated transition
graph, computing the critical value and the Bellman defects, screening
the test region by \eqref{eq:implementation-localization}, and solving
the local linear program only at the remaining test points. The
Lipschitz estimate permits the classification obtained at one point
to be propagated to a neighborhood whenever the computed value is
strictly separated from \(\delta\).

The convergence regime requires
\(\tau\to0\) and \(h/\tau\to0\), while the upper-support result of
Section~\ref{sec:convergence-fd-mather} uses the stronger scaling
\eqref{eq:conv_parameter_strong}. Since the recovery construction
gives
\[
v_{h,\tau}
=
v+O(\tau)+O\left(\frac{h}{\tau}\right),
\]
balancing the two errors suggests $h\asymp\tau^2$. Theorem~\ref{thm:continuous-fully-discrete-critical-error} and
\eqref{eq:recovery-gap-rate} then suggest the action tolerance
\[
\varepsilon_{h,\tau}
=
C_0\left(\tau+\frac{h}{\tau}\right),
\]
which is \(O(\tau)\) in the balanced regime.

The radius \(r\) determines the phase-space resolution and should not
be substantially smaller than the velocity spacing \(h/\tau\); a
natural choice is
\[
r\ge c\,\frac{h}{\tau},
\qquad c>1.
\]
The mass threshold \(\delta\) controls selectivity, while the ratio
\(\varepsilon/\delta\) determines the screening level in
\eqref{eq:implementation-localization}. Finally, \(V\) is increased
until the critical value and the computed approximation remain
unchanged.

\subsection{A model example with two minimizing components}
\label{subsec:implementation-example}

\textit{The example only illustrates the mass-threshold mechanism. A systematic study
	of the algorithm, its relation to computational KAM methods, and applications
	to invariant tori is left for future work.}

Let \(d=1\) and consider
\[
L(x,v)=\frac12|v|^2-W(x),
\]
where \(W\) is a smooth periodic potential consisting of two
identical, disjoint bumps centered at
\[
x_1=\frac14,
\qquad
x_2=\frac{29}{48},
\]
and normalized so that
$W(x_1)=W(x_2)=\max_{\mathbb T}W=1$. 
Thus
\[
\widetilde{\mathcal M}_L
=
\{(x_1,0),(x_2,0)\},
\qquad
-\alpha(H)=-1.
\]
The numerical parameters are summarized in
Table~\ref{tab:numerical-parameters}. Notice that
\(x_1=30/120\) is a grid point, whereas
\(x_2=72.5/120\) lies halfway between the neighboring nodes
\(72/120\) and \(73/120\).

\begin{table}[ht]
	\centering
	\begin{tabular}{ccl}
		\hline
		\textbf{Parameter} & \textbf{Value} & \textbf{Description} \\
		\hline
		\(d\) & \(1\) & space dimension \\
		\(N\) & \(120\) & number of spatial grid points \\
		\(h\) & \(1/120\) & spatial mesh size \\
		\(\tau\) & \(5\cdot10^{-2}\) & time step \\
		\(V\) & \(1.2\) & velocity cutoff \\
		\(x_1\) & \(1/4\) & first maximum, on the grid \\
		\(x_2\) & \(29/48\) & second maximum, at half a cell \\
		\(\mathcal K_V\) & \(\{-7,\ldots,7\}\) &
		admissible lifted displacements \\
		\(\#\mathcal K_V\) & \(15\) &
		displacements per node \\
		\(M\) & \(1800\) &
		arcs in the truncated graph \\
		\hline
	\end{tabular}
	\caption{Parameters of the numerical example.}
	\label{tab:numerical-parameters}
\end{table}
Moreover, \(h/\tau=1/6\), and hence
\[
\mathcal K_V
=
\left\{
k\in\mathbb Z:
\left|\frac{kh}{\tau}\right|\le V
\right\}
=
\{-7,\ldots,7\}.
\]
Thus each node has \(15\) outgoing arcs and $M=N\#\mathcal K_V=1800.$

Let \(\overline c_1=0\) be the reduced cost of the stationary
self-loop at \(x_1\), and let \(\overline c_2>0\) be the minimum
reduced cost among the transitions in \(B_r((x_2,0))\). Since
\(\kappa\le\mathbf 1_{[0,1)}\), identity
\eqref{eq:bellman-gap-identity} gives
\[
m_{\mathcal R_{h,\tau}\mu,r}(x_2,0)
\le
\frac{\mathcal A_{h,\tau}(\mu)-\bar L(h,\tau)}
{\overline c_2}.
\]
Therefore
\[
Q_{h,\tau,r}^{\varepsilon}(x_2,0)
\le
\min\left\{1,\frac{\varepsilon}{\overline c_2}\right\}.
\]
In this example equality is attained by combining the zero-defect
measure at the first component with a stationary measure near the
second component. 

We use the piecewise affine kernel
\[
\kappa(s)=\min\{1,\max\{0,2-2s\}\},
\qquad
\operatorname{Lip}(\kappa)=2.
\]
The critical value is computed by solving the linear-programming
formulation of the minimum-cycle-mean problem on the transition graph;
see \cite{karp}. The dual variables provide a critical potential and
the corresponding reduced costs. Relative value iteration gives an
independent check of the critical value. The function
\(Q_{h,\tau,r}^{\varepsilon}\) is evaluated on a phase-space test set
of \(5025\) points using the truncated graph \(\mathcal E_V\).

\begin{figure}[ht]
	\centering
	\includegraphics[width=\textwidth]{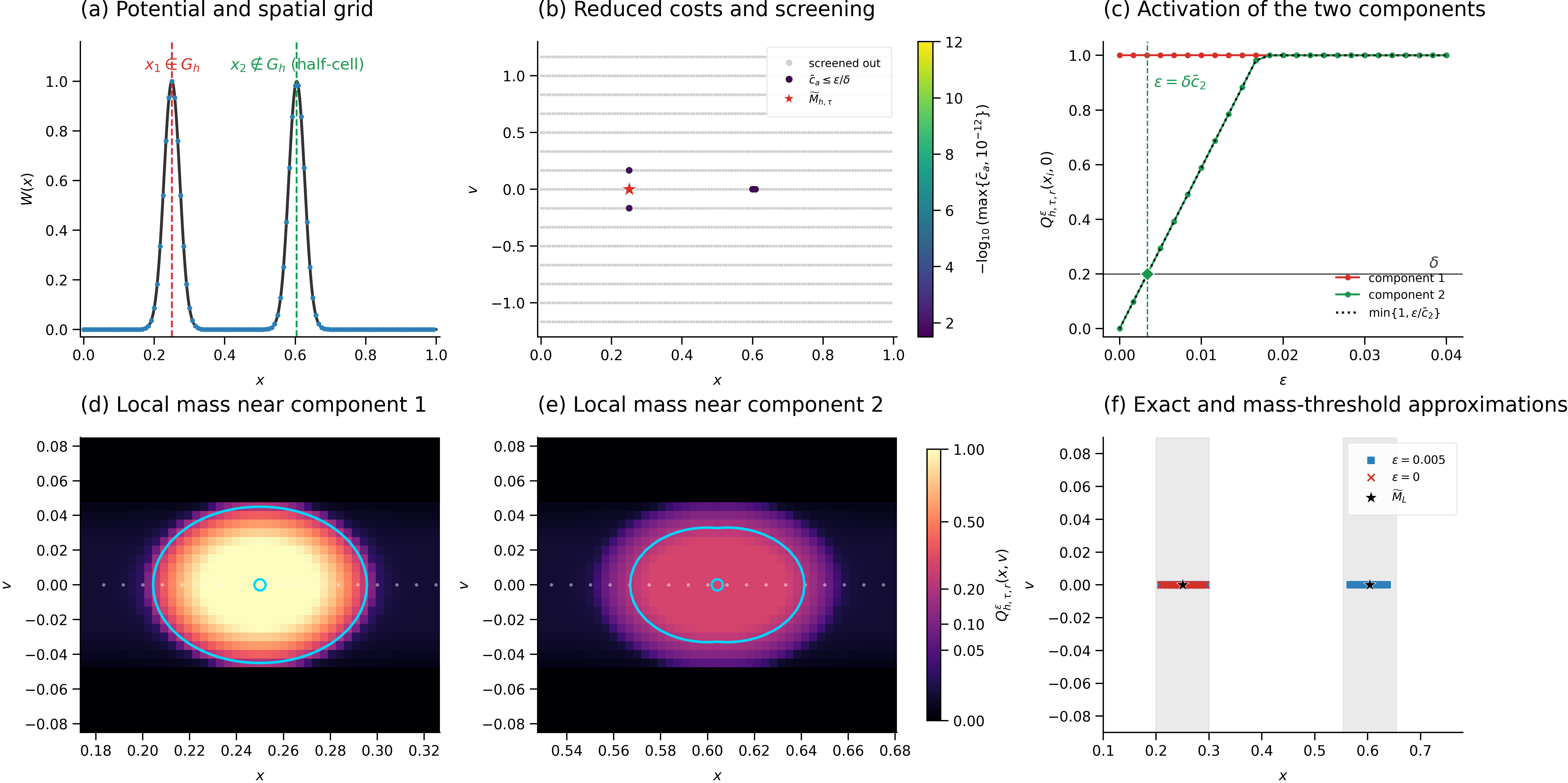}
	\caption{
		(a) Potential and grid; (b) reduced costs and screening;
		(c) activation of the components; (d), (e) local mass fields;
		(f) exact and near-minimizing approximations.
	}
	\label{fig:numerics}
\end{figure}

For this discretization,
\[
\bar L(h,\tau)=-1,
\qquad
\overline c_1=0,
\qquad
\overline c_2=1.6988\cdot10^{-2}.
\]
As shown in Figure~\ref{fig:numerics}(a), the grid represents the
first maximum exactly but not the second. Consequently, the exact
fully--discrete Mather set consists only of \((x_1,0)\).

Figure~\ref{fig:numerics}(b) shows the atoms of the truncated graph.
Only five arcs satisfy
\(\overline c_a\le\varepsilon/\delta\): three at \(x_1\), with
velocities \(-h/\tau\), \(0\), and \(h/\tau\), and the two stationary
self-loops at the nodes adjacent to \(x_2\). The screening condition
localizes the region where \(Q_{h,\tau,r}^{\varepsilon}\) may exceed
\(\delta\), thereby reducing the number of test points at which the
local linear program must be solved.

Figure~\ref{fig:numerics}(c) shows that
\[
Q_{h,\tau,r}^{\varepsilon}(x_1,0)=1,
\qquad
Q_{h,\tau,r}^{\varepsilon}(x_2,0)
=
\min\left\{1,\frac{\varepsilon}{\overline c_2}\right\}.
\]
Thus the second component crosses the threshold when
\(\varepsilon=\delta\overline c_2\).

For $r=5\cdot10^{-2}$, 
$\delta=0.2$, 
$\varepsilon=5\cdot10^{-3}$,
we obtain
\[
Q_{h,\tau,r}^{\varepsilon}(x_1,0)=1,
\qquad
Q_{h,\tau,r}^{\varepsilon}(x_2,0)=0.2943>\delta.
\]
The corresponding fields and level sets
\(Q_{h,\tau,r}^{\varepsilon}=\delta\) are shown in
Figures~\ref{fig:numerics}(d)--(e). Finally,
Figure~\ref{fig:numerics}(f) shows that exact discrete minimization
detects only the first component, whereas the mass-threshold
approximation recovers both continuous minimizing components.
	\appendix
	\section{A technical lemma}
	\label{app:technical-lemma}

	Throughout this appendix $\pi:\mathbb R^d\to\mathbb T^d$ denotes the canonical
	projection. We say that a bi-infinite sequence of transitions
	$(x_k^n,x_{k+1}^n,\ell_k^n)_{k\in\mathbb Z}$, with
	$x_k^n\in\mathcal G_{h_n}$ and $\ell_k^n\in\mathbb Z^d$, is a
	\emph{fully--discrete calibrated chain} for
	$(u_{h_n,\tau_n},\bar L(h_n,\tau_n))$ if every transition realizes the minimum
	in \eqref{eq:fully-discrete-ergodic-equation}, that is
	\begin{equation}\label{eq:chain-calibration}
		u_{h_n,\tau_n}(x_{k+1}^n)-u_{h_n,\tau_n}(x_k^n)
		=
		\tau_nL\bigl(x_k^n,v_k^n\bigr)-\tau_n\bar L(h_n,\tau_n),
		\qquad
		v_k^n:=\frac{x_{k+1}^n+\ell_k^n-x_k^n}{\tau_n},
	\end{equation}
	for every $k\in\mathbb Z$.

\begin{lemma}
	\label{lem:compactness-pointed-calibrated-chains}
	Assume \eqref{eq:conv_parameter_strong} and let
	\(\widetilde u_{h_n,\tau_n}\to u\) uniformly on \(\mathbb T^d\),
	where \(u\) is a critical solution. Let
	\((x_k^n,x_{k+1}^n,\ell_k^n)_{k\in\mathbb Z}\) be fully--discrete
	calibrated chains and set
	\[
	v_k^n
	:=
	\frac{x_{k+1}^n+\ell_k^n-x_k^n}{\tau_n}.
	\]
	If $(x_0^n,v_0^n)\longrightarrow(x,v)$, 
	then, up to a subsequence, their piecewise affine interpolations
	\(\gamma_n\) converge locally uniformly to a globally
	\(u\)-calibrated Euler--Lagrange trajectory
	\(\gamma:\mathbb R\to\mathbb T^d\). Moreover, setting
	\[
	V_n(t):=v_k^n,
	\qquad
	t\in[k\tau_n,(k+1)\tau_n),
	\]
	one has
	$V_n\longrightarrow\dot\gamma$ in  $ L^\infty_{\mathrm{loc}}(\mathbb R)$,
	and $\gamma(0)=x$, $\dot\gamma(0)=v.$
	Consequently, $(x,v)\in\widetilde{\mathcal I}(u)
	\subset\widetilde{\mathcal N}_L$.
\end{lemma}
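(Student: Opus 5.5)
The plan is to extract a limit curve by compactness, identify it as calibrated by passing to the limit in the telescoped calibration identity, and then upgrade it to an Euler--Lagrange trajectory using a discrete Euler--Lagrange equation along the chain.

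\emph{Step 1: compactness.} By Proposition~\ref{prop:fd-lip-bound}, every calibrated transition satisfies $|v_k^n|\le D$. Hence the lifted piecewise affine interpolations $\widetilde\gamma_n$, with $\widetilde\gamma_n(k\tau_n)=\widetilde x_k^n$ and slope $v_k^n$ on $[k\tau_n,(k+1)\tau_n]$, are uniformly $D$-Lipschitz. Ascoli--Arzel\`a and a diagonal argument give a subsequence converging locally uniformly to a Lipschitz curve $\gamma$ with $\gamma(0)=x$. The functions $V_n$ are bounded in $L^\infty$, so $V_n\rightharpoonup\dot\gamma$ weakly-$*$.

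\emph{Step 2: calibration.} Summing \eqref{eq:chain-calibration} from $k=a_n$ to $b_n-1$, with $a_n\tau_n\to a$ and $b_n\tau_n\to b$, gives
\[
u_n(x_{b_n}^n)-u_n(x_{a_n}^n)=\int_{a_n\tau_n}^{b_n\tau_n}L\bigl(\gamma_n(\lfloor s/\tau_n\rfloor\tau_n),V_n(s)\bigr)\,ds-(b_n-a_n)\tau_n\bar L_n .
\]
The left-hand side converges to $u(\gamma(b))-u(\gamma(a))$, and $\bar L_n\to-\alpha(H)$. Convexity of $L$ in $v$ gives lower semicontinuity under weak-$*$ convergence, so
\[
u(\gamma(b))-u(\gamma(a))\ge\int_a^bL(\gamma,\dot\gamma)\,ds+\alpha(H)(b-a).
\]
The reverse inequality holds because $u$ is a critical subsolution. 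Therefore $\gamma$ is $u$-calibrated on $\mathbb R$, and in particular it is a minimizer and hence an Euler--Lagrange trajectory, $C^2$ by Tonelli regularity. Moreover, equality in the lower semicontinuity step together with strict convexity of $L$ upgrades $V_n\to\dot\gamma$ to strong $L^1_{\mathrm{loc}}$ convergence.

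\emph{Step 3: uniform convergence of velocities and $\dot\gamma(0)=v$.} This is the main obstacle, and it is where \eqref{eq:conv_parameter_strong} enters. I would derive a discrete Euler--Lagrange inequality along the chain. Perturb the intermediate node $x_{k+1}^n$ to a neighbouring grid point $x_{k+1}^n+he$ (adjusting winding labels consistently), and compare the two-step cost
\[
\tau L(x_k,v_k)+\tau L(x_{k+1},v_{k+1})
\]
with the perturbed one. Calibration makes the chain minimal over two-step paths with fixed endpoints, since $u_n$ telescopes and the Bellman inequality holds on every other path. A second-order Taylor expansion then yields
\[
\bigl|D_vL(x_{k+1},v_{k+1})-D_vL(x_k,v_k)-\tau D_xL(x_{k+1},v_{k+1})\bigr|\le C\Bigl(\frac{h}{\tau}+\tau\Bigr)\tau ,
\]
where the right-hand side uses $h\le C_\ast\tau^2$, so that $h/\tau\le C_\ast\tau$. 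The $h/\tau$ term comes from the quadratic remainder $C h^2/\tau$ divided by the step $h$.

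The momenta $p_k^n:=D_vL(x_k^n,v_k^n)$ therefore satisfy a discrete Euler--Lagrange recursion with local error $O(\tau^2)$. Summing over $O(1/\tau)$ steps gives an equi-Lipschitz bound on the piecewise constant momentum paths, up to an $O(\tau)$ error. Since $D_vL(x,\cdot)$ is a diffeomorphism, uniform on compact sets, the paths $V_n$ are asymptotically equicontinuous. A discrete Ascoli argument then upgrades the strong $L^1$ convergence to $L^\infty_{\mathrm{loc}}$ convergence toward the continuous function $\dot\gamma$. Evaluating at $t=0$ gives $\dot\gamma(0)=\lim_n v_0^n=v$.

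Finally, $(x,v)=(\gamma(0),\dot\gamma(0))\in\widetilde{\mathcal I}(u)\subset\widetilde{\mathcal N}_L$. If the perturbation argument proves too lossy at $O(h)$ scale, the fallback is to compare with the two-step $E_\tau$ operator from the proof of Theorem~\ref{thm:continuous-fully-discrete-critical-error}.
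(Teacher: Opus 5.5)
Your proposal is correct in substance, and its decisive step is the paper's own. The calibrated chain minimizes the two-step cost $S_k^n$ over the lifted grid, and $\|\nabla^2 S_k^n\|\le C/\tau_n$ near $\widetilde x_{k+1}^n$. Comparing with neighbouring lifted grid points gives an approximate discrete Euler--Lagrange relation. Under \eqref{eq:conv_parameter_strong} this yields $|v_{k+1}^n-v_k^n|\le C\tau_n$, hence equi-Lipschitz velocity interpolations and Ascoli.

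One correction: your displayed bound $C(h/\tau+\tau)\tau$ is off by a factor $\tau$. The identity $\nabla S_k^n(\widetilde x_{k+1}^n)=p_k^n-p_{k+1}^n+\tau_n D_xL(x_{k+1}^n,v_{k+1}^n)$ is exact. Grid minimality only gives $|\nabla S_k^n(\widetilde x_{k+1}^n)|\le Ch_n/\tau_n$, which is your own ``$Ch^2/\tau$ divided by $h$''. This cannot be improved in general, since the continuous minimizer of the strictly convex $S_k^n$ may lie half a cell from the grid. Under $h_n\le C_\ast\tau_n^2$ the local defect is therefore $O(\tau_n)$, not $O(\tau_n^2)$. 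Summed over $O(1/\tau_n)$ steps it does not vanish, so the momenta need not shadow an exact discrete Euler--Lagrange orbit, and the relation says nothing about $\dot p=D_xL$ in the limit. This is harmless for your argument. The equi-Lipschitz bound only needs $|p_{k+1}^n-p_k^n|\le\tau_n\sup_{\mathbb T^d\times\overline B_D}|D_xL|+Ch_n/\tau_n\le C\tau_n$. The Euler--Lagrange property comes from calibration and Tonelli regularity, both in your proof and in the paper's.

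Where you genuinely differ is how calibration passes to the limit. You obtain it first, from three ingredients:
\begin{itemize}
\item weak-$*$ lower semicontinuity of the action;
\item the reverse inequality supplied by the critical subsolution $u$;
\item strict convexity, which upgrades $V_n\rightharpoonup\dot\gamma$ to strong $L^1_{\mathrm{loc}}$ convergence.
\end{itemize}
The equi-Lipschitz bound then upgrades this to $L^\infty_{\mathrm{loc}}$.

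The paper reverses the order. The increment bound gives $V_n\to V$ in $L^\infty_{\mathrm{loc}}$, and $\widetilde\gamma_n=\widetilde x_0^n+\int_0^tV_n$ identifies $V=\dot\gamma$. The integrands in the summed calibration identity then converge uniformly, so equality passes to the limit directly, with no semicontinuity or subsolution argument.

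Given your Step 3, your Step 2 is redundant. It does, however, isolate something the paper's proof does not. Calibration of the limit curve, and even strong $L^1_{\mathrm{loc}}$ convergence of the velocities, need only \eqref{eq:conv_parameter} together with the velocity bound of Proposition~\ref{prop:fd-lip-bound}. The quadratic scaling \eqref{eq:conv_parameter_strong} is used solely to upgrade to $L^\infty_{\mathrm{loc}}$ convergence, that is, to identify $\dot\gamma(0)$ with $\lim_n v_0^n$.
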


\begin{proof}
	Choose a lift $\widetilde x\in\mathbb R^d$ of $x$ and lifts
	$\widetilde x_0^n\to\widetilde x$ of $x_0^n$. Define inductively
	\(
	\widetilde x_{k+1}^n-\widetilde x_k^n=\tau_n v_k^n
	\)
	for all $k\in\mathbb Z$, so that
	$\pi(\widetilde x_k^n)=x_k^n$. Let $\widetilde\gamma_n$ be the piecewise
	affine interpolation of the points $\widetilde x_k^n$, set
	$\gamma_n=\pi\circ\widetilde\gamma_n$, and denote by $V_n$ the
	piecewise constant velocity. Then
	$\dot{\widetilde\gamma}_n=V_n$ a.e.
	
	By Proposition~\ref{prop:fd-lip-bound}, every calibrated transition
	satisfies
	\begin{equation}
		\label{eq:calibrated-chain-velocity-bound}
		|v_k^n|\le D,
		\qquad k\in\mathbb Z,
	\end{equation}
	with $D$ independent of $k$ and $n$.
	
	To estimate the variation of consecutive velocities, let
	$\widetilde L$ be the periodic lift of $L$ and define
	\[
	S_k^n(y):=
	\tau_n\widetilde L\!\left(
	\widetilde x_k^n,\frac{y-\widetilde x_k^n}{\tau_n}
	\right)
	+
	\tau_n\widetilde L\!\left(
	y,\frac{\widetilde x_{k+2}^n-y}{\tau_n}
	\right).
	\]
	The Bellman inequalities imply that
	$\widetilde x_{k+1}^n$ minimizes $S_k^n$ on the lifted grid, since
	equality is attained there by calibration. Using
	\eqref{eq:calibrated-chain-velocity-bound},
	\eqref{eq:conv_parameter_strong}, and the $C^2$ regularity of $L$, one
	obtains
	\[
	\sup_{|y-\widetilde x_{k+1}^n|\le h_n}
	\|\nabla^2 S_k^n(y)\|
	\le \frac{C}{\tau_n}.
	\]
	Comparing the minimum with neighboring grid points and applying Taylor's
	formula yields
	\begin{equation}
		\label{eq:approximate-discrete-EL}
		|\nabla S_k^n(\widetilde x_{k+1}^n)|
		\le C\frac{h_n}{\tau_n}.
	\end{equation}
	
	Since
	\[
	\nabla S_k^n(\widetilde x_{k+1}^n)
	=
	D_vL(x_k^n,v_k^n)
	-
	D_vL(x_{k+1}^n,v_{k+1}^n)
	+
	\tau_nD_xL(x_{k+1}^n,v_{k+1}^n),
	\]
	the bound \eqref{eq:approximate-discrete-EL}, together with
	$d_{\mathbb T^d}(x_k^n,x_{k+1}^n)\le D\tau_n$, gives
	\[
	|D_vL(x_{k+1}^n,v_k^n)-D_vL(x_{k+1}^n,v_{k+1}^n)|
	\le
	C\!\left(\tau_n+\frac{h_n}{\tau_n}\right).
	\]
	Uniform strict convexity of $L$ on
	$\mathbb T^d\times\overline B_D$ therefore implies
	\[
	|v_{k+1}^n-v_k^n|
	\le
	C\!\left(\tau_n+\frac{h_n}{\tau_n}\right).
	\]
	Using $h_n\le C_*\tau_n^2$, we obtain
	\begin{equation}
		\label{eq:discrete-velocity-increment}
		|v_{k+1}^n-v_k^n|\le C\tau_n .
	\end{equation}
	
	Let $\widehat V_n$ be the piecewise affine interpolation of the values
	$v_k^n$ at the times $k\tau_n$. By
	\eqref{eq:calibrated-chain-velocity-bound} and
	\eqref{eq:discrete-velocity-increment},
	$\|\widehat V_n\|_\infty\le D$ and
	$\operatorname{Lip}(\widehat V_n)\le C$, while
	$\|V_n-\widehat V_n\|_\infty\le C\tau_n$. Hence, after extraction,
	$\widehat V_n\to V$ locally uniformly on $\mathbb R$ for some Lipschitz
	function $V$, and therefore
	\begin{equation}
		\label{eq:piecewise-velocity-convergence}
		V_n\to V
		\qquad\text{in }L^\infty_{\mathrm{loc}}(\mathbb R).
	\end{equation}
	
	Since
	$\widetilde\gamma_n(t)=\widetilde x_0^n+\int_0^tV_n(s)\,ds$,
	the curves $\widetilde\gamma_n$ converge locally uniformly to
	\[
	\widetilde\gamma(t)
	=
	\widetilde x+\int_0^tV(s)\,ds.
	\]
	Setting $\gamma=\pi\circ\widetilde\gamma$, we obtain
	$\dot\gamma=V$, $\gamma(0)=x$, and, using
	$\widehat V_n(0)=v_0^n$,
	\[
	\dot\gamma(0)=V(0)=\lim_{n\to\infty}v_0^n=v.
	\]
	
	It remains to pass to the limit in the calibration identity. Fix
	$a<b$ and set
	$k_a^n=\lceil a/\tau_n\rceil$,
	$k_b^n=\lfloor b/\tau_n\rfloor$.
	Summing the discrete calibration identities from $k_a^n$ to
	$k_b^n-1$ gives
	\begin{equation}
		\label{eq:summed-chain-calibration}
		u_{h_n,\tau_n}(x_{k_b^n}^n)
		-
		u_{h_n,\tau_n}(x_{k_a^n}^n)
		=
		\sum_{k=k_a^n}^{k_b^n-1}
		\tau_nL(x_k^n,v_k^n)
		-
		(k_b^n-k_a^n)\tau_n\bar L(h_n,\tau_n).
	\end{equation}
	
	Since $k_a^n\tau_n\to a$ and $k_b^n\tau_n\to b$, the locally uniform
	convergence of $\gamma_n$ and the uniform convergence of the discrete
	critical solutions imply
	\[
	u_{h_n,\tau_n}(x_{k_b^n}^n)
	-
	u_{h_n,\tau_n}(x_{k_a^n}^n)
	\to
	u(\gamma(b))-u(\gamma(a)).
	\]
	Moreover,
	\[
	\sum_{k=k_a^n}^{k_b^n-1}\tau_nL(x_k^n,v_k^n)
	=
	\int_{k_a^n\tau_n}^{k_b^n\tau_n}
	L\bigl(\gamma_n(\lfloor t\rfloor_n),V_n(t)\bigr)\,dt,
	\]
	where
	$\lfloor t\rfloor_n:=\tau_n\lfloor t/\tau_n\rfloor$.
	By the locally uniform convergence of $\gamma_n$,
	\eqref{eq:piecewise-velocity-convergence}, and the uniform velocity
	bound, the integrands converge uniformly on compact intervals; hence
	\[
	\sum_{k=k_a^n}^{k_b^n-1}\tau_nL(x_k^n,v_k^n)
	\to
	\int_a^bL(\gamma(t),\dot\gamma(t))\,dt.
	\]
	Finally,
	$(k_b^n-k_a^n)\tau_n\to b-a$ and
	$\bar L(h_n,\tau_n)\to-\alpha(H)$ by
	Theorem~\ref{thm:continuous-fully-discrete-critical-error}. Passing to
	the limit in \eqref{eq:summed-chain-calibration} yields
	\[
	u(\gamma(b))-u(\gamma(a))
	=
	\int_a^bL(\gamma(t),\dot\gamma(t))\,dt
	+
	\alpha(H)(b-a).
	\]
	Thus $\gamma$ is globally $u$-calibrated. By Tonelli regularity,
	$\gamma$ is an Euler--Lagrange trajectory, and therefore
	\[
	(x,v)
	=
	(\gamma(0),\dot\gamma(0))
	\in
	\widetilde{\mathcal I}(u)
	\subset
	\widetilde{\mathcal N}_L.
	\]
\end{proof}

\section*{Acknowledgments} The authors acknowledge the use of generative AI tools to assist with the graphical preparation of the figures and with the revision and execution of the code used in the numerical experiments. The mathematical formulation, the choice of the numerical parameters, and the verification and interpretation of all results were carried out by the authors.


\begin{thebibliography}{99}
		\small
		\setlength{\itemsep}{0.15em}
		\setlength{\parskip}{0pt}

		\bibitem{Arnaud2011}
		M.-C.~Arnaud,
		\newblock A non-differentiable essential irrational invariant curve for a
		$C^1$ symplectic twist map,
		\newblock \emph{Journal of Modern Dynamics}
		\textbf{5} (2011), no.~3, 583--591.

		\bibitem{Bernard2002}
		P.~Bernard,
		\newblock Connecting orbits of time dependent Lagrangian systems,
		\newblock \emph{Annales de l'Institut Fourier (Grenoble)}
		\textbf{52} (2002), no.~5, 1533--1568.

		\bibitem{BernardContreras2008}
		P.~Bernard and G.~Contreras,
		\newblock A generic property of families of Lagrangian systems,
		\newblock \emph{Annals of Mathematics}
		\textbf{167} (2008), no.~3, 1099--1108.

		\bibitem{BouillardFaouZavidovique}
		A.~Bouillard, E.~Faou, and M.~Zavidovique,
		\newblock Fast weak--KAM integrators for separable Hamiltonian systems,
		\newblock \emph{Mathematics of Computation}
		\textbf{85} (2016), no.~297, 85--117.
		
		\bibitem{Camilli}
		F.~Camilli, I.~Capuzzo-Dolcetta, and D.~L.~A.~Gomes,
		\newblock Error estimates for the approximation of the effective Hamiltonian,
		\newblock \emph{Applied Mathematics and Optimization}
		\textbf{57} (2008), no.~1, 30--57.

		\bibitem{CamilliMendico}
		F.~Camilli and C.~Mendico,
		\newblock Semi--discrete approximation of Aubry and Mather sets,
		\newblock arXiv:2604.24148, 2026.

		\bibitem{ContrerasDelgadoIturriaga}
		G.~Contreras, J.~Delgado, and R.~Iturriaga,
		\newblock Lagrangian flows: the dynamics of globally minimizing orbits II,
		\newblock \emph{Boletim da Sociedade Brasileira de Matem\'atica}
		\textbf{28} (1997), no.~2, 155--196.

		\bibitem{Fathi2008}
		A.~Fathi,
		\newblock \emph{Weak KAM Theorem in Lagrangian Dynamics},
		\newblock Cambridge Studies in Advanced Mathematics, vol.~88,
		\newblock Cambridge University Press, Cambridge, 2008.

		\bibitem{FiguerasHaroLuque}
		J.-Ll.~Figueras, A.~Haro, and A.~Luque,
		\newblock Rigorous computer-assisted application of KAM theory:
		a modern approach,
		\newblock \emph{Foundations of Computational Mathematics}
		\textbf{17} (2017), no.~5, 1123--1193.

		\bibitem{Garibaldi}
		E.~Garibaldi and P.~Thieullen,
		\newblock Minimizing orbits in the discrete Aubry--Mather model,
		\newblock \emph{Nonlinearity}
		\textbf{24} (2011), no.~2, 563--611.

		\bibitem{Gomes}
		D.~L.~A.~Gomes,
		\newblock Viscosity solution methods and the discrete Aubry--Mather problem,
		\newblock \emph{Discrete and Continuous Dynamical Systems}
		\textbf{13} (2005), no.~1, 103--116.

		\bibitem{Gomes_Ob}
		D.~L.~A.~Gomes and A.~M.~Oberman,
		\newblock Computing the effective Hamiltonian using a variational approach,
		\newblock \emph{SIAM Journal on Control and Optimization}
		\textbf{43} (2004), no.~3, 792--812.

		\bibitem{Hadikhanloo}
		S.~Hadikhanloo and F.~J.~Silva,
		\newblock Finite mean field games: fictitious play and convergence to a
		first order continuous mean field game,
		\newblock \emph{Journal de Mathématiques Pures et Appliquées}
		\textbf{132} (2019), 369--397.

		\bibitem{HaroDeLaLlave}
		A.~Haro and R.~de la Llave,
		\newblock A parameterization method for the computation of invariant tori
		and their whiskers in quasi-periodic maps: explorations and mechanisms for
		the breakdown of hyperbolicity,
		\newblock \emph{SIAM Journal on Applied Dynamical Systems}
		\textbf{6} (2007), no.~1, 142--207.

		\bibitem{HaroEtAl}
		A.~Haro, M.~Canadell, J.-Ll.~Figueras, A.~Luque, and J.-M.~Mondelo,
		\newblock \emph{The Parameterization Method for Invariant Manifolds:
			From Rigorous Results to Effective Computations},
		\newblock Applied Mathematical Sciences, vol.~195,
		\newblock Springer, Cham, 2016.

		\bibitem{HuguetDeLaLlaveSire}
		G.~Huguet, R.~de la Llave, and Y.~Sire,
		\newblock Computation of whiskered invariant tori and their associated
		manifolds: new fast algorithms,
		\newblock \emph{Discrete and Continuous Dynamical Systems}
		\textbf{32} (2012), no.~4, 1309--1353.

		\bibitem{Iturriaga-Mendico}
R.~Iturriaga, C.~Mendico, K.~Wang, and Y.~Xu,
\newblock Discretization and vanishing discount problems for first-order
mean field games,
\newblock \emph{Calculus of Variations and Partial Differential Equations}
\textbf{65}, 228 (2026).

		\bibitem{Iturriaga-Wang}
		R.~Iturriaga and K.~Wang,
		\newblock A discrete weak KAM method for first-order stationary mean field
		games,
		\newblock \emph{SIAM Journal on Applied Dynamical Systems}
		\textbf{22} (2023), no.~2, 1253--1274.

		\bibitem{karp}
		R.~M.~Karp,
		\newblock A characterization of the minimum cycle mean in a digraph,
		\newblock \emph{Discrete Mathematics}
		\textbf{23} (1978), no.~3, 309--311.

		\bibitem{MacKayGreene}
		R.~S.~MacKay,
		\newblock Greene's residue criterion,
		\newblock \emph{Nonlinearity}
		\textbf{5} (1992), no.~1, 161--187.

		\bibitem{MacKayMeissStark}
		R.~S.~MacKay, J.~D.~Meiss, and J.~Stark,
		\newblock Converse KAM theory for symplectic twist maps,
		\newblock \emph{Nonlinearity}
		\textbf{2} (1989), no.~4, 555--570.

		\bibitem{MacKayPercival}
		R.~S.~MacKay and I.~C.~Percival,
		\newblock Converse KAM: theory and practice,
		\newblock \emph{Communications in Mathematical Physics}
		\textbf{98} (1985), no.~4, 469--512.

		\bibitem{Mane1996}
		R.~Ma\~n\'e,
		\newblock Generic properties and problems of minimizing measures of
		Lagrangian systems,
		\newblock \emph{Nonlinearity}
		\textbf{9} (1996), no.~2, 273--310.

		\bibitem{Mather1991}
		J.~N.~Mather,
		\newblock Action minimizing invariant measures for positive definite
		Lagrangian systems,
		\newblock \emph{Mathematische Zeitschrift}
		\textbf{207} (1991), no.~2, 169--207.

		\bibitem{Rorro}
		M.~Rorro,
		\newblock An approximation scheme for the effective Hamiltonian and
		applications,
		\newblock \emph{Applied Numerical Mathematics}
		\textbf{56} (2006), no.~9, 1238--1254.
		
		\bibitem{Soga}
		K.~Soga, 
		\newblock Weak KAM theory for action minimizing random walks,  
		\newblock \emph{Calc. Var. Partial Differential Equations} {\bf 60} (2021), no.~5, Paper No. 179, 49 pp.

		\bibitem{Sorrentino2015}
		A.~Sorrentino,
		\newblock \emph{Action-Minimizing Methods in Hamiltonian Dynamics:
			An Introduction to Aubry--Mather Theory},
		\newblock Mathematical Notes, vol.~50,
		\newblock Princeton University Press, Princeton, 2015.

		\bibitem{Su}
		X.~Su and P.~Thieullen,
		\newblock Convergence of discrete Aubry--Mather models in the continuous
		limit,
		\newblock \emph{Nonlinearity}
		\textbf{31} (2018), no.~5, 2126--2155.
		
      \bibitem{tran}
        H.V.~Tran and  Y.F~Yu, 
        \newblock $L^\infty$ Variational Approximation of the Aubry Set,
        \newblock 	arXiv:2609.01557, 2026.
      

		\bibitem{Zavidovique}
		M.~Zavidovique,
		\newblock \emph{Discrete Weak KAM Theory},
		\newblock Lecture Notes in Mathematics, vol.~2377,
		\newblock Springer, Cham, 2025.

		\bibitem{Zhang2017}
		J.~Zhang,
		\newblock Generically Ma\~n\'e set supports uniquely ergodic measure for
		residual cohomology class,
		\newblock \emph{Proceedings of the American Mathematical Society}
		\textbf{145} (2017), no.~9, 3973--3980.

	\end{thebibliography}
\end{document}